\documentclass[11pt]{article}

\usepackage{jcd}
\usemedgeometry
\bluehyperref
\usepackage[numbers]{natbib}

\usepackage{macros}
\usepackage{overpic}

\begin{document}

\title{Distribution free M-estimation}
\author{Felipe Areces ~~~~~~ John Duchi\thanks{Partially supported by
    the Office of Naval Research Grant N00014-22-1-2669.
    This work began as
    part of the ``Algorithmic Stability'' workshop
    hosted by the American Institute of Mathematics (AIM), May 2025.} \\
  Stanford University}
\date{June 2025}
\maketitle

\vspace{-1cm}
% -*- mode: latex -*- %

\begin{abstract}
  The basic question of delineating those statistical problems that
  are solvable without making any assumptions on the underlying
  data distribution has long animated statistics and learning theory.
  This paper characterizes when a convex M-estimation or
  stochastic optimization problem is solvable in such an assumption-free
  setting, providing a precise dividing line between solvable and unsolvable
  problems.
  The conditions we identify show, perhaps surprisingly, that Lipschitz
  continuity of the loss being minimized is not necessary for distribution
  free minimization, and they are also distinct from classical characterizations
  of
  learnability in machine learning.
\end{abstract}

% -*- mode: latex -*- %

\section{Introduction}

Consider a general convex M-estimation problem, where for a population
distribution $P$ on a set $\mc{Z}$ and a convex loss $\loss_z(\theta)$
measuring the performance of a parameter $\theta$ on example $z$,
we wish to minimize the population loss
\begin{equation}
  \poploss_P(\theta) \defeq \E_P[\loss_Z(\theta)]
  = \int \loss_z(\theta) dP(z)
  \label{eqn:pop-loss}
\end{equation}
over $\theta$ belonging to a (known) convex parameter space $\Theta$.
We are interested in truly distribution free minimization of this population
loss, meaning that we would like to be able to (asymptotically) minimize
$\poploss_P$ given i.i.d.\ observations $Z_i$ drawn from $P$ without making
\emph{any} assumptions on the distribution $P$.
By this, we mean we know essentially the bare minimum from a statistical
perspective: only (i) the loss $\loss$, (ii) the set $\mc{Z}$, and
(iii) the parameter space $\Theta$.
Our main contribution will be to delineate those situations in which
minimizing the loss~\eqref{eqn:pop-loss} is possible from those in which it
is not.

To do so, we require a bit more formality.
For each $z \in \mc{Z}$, the
loss $\loss_z : \R^d \to \R \cup \{+\infty\}$ is closed convex and
proper, and to avoid trivialities, we assume that $\loss_z(\theta) < \infty$
for all $\theta \in \interior \Theta$ and
is measurable in $z$.
%
%% Let $\mc{Z}$ be a topological space, and let $\mc{P}(\mc{Z})$ be the
%% collection of all (Borel) probability measures on $\mc{Z}$. Consider a collection
%% of loss functions indexed by $z \in \mc{Z}$, where
%% for each $z$,
%% $\loss(\cdot, z) : \Theta \to \R$ is closed convex in its first argument.
%% For shorthand, we let $\mc{L} = \{\loss(\cdot, z)\}_{z \in \mc{Z}}$ be
%% this collection. Assume $\Theta \subset \R$ is a compact convex set.
%% We assume that for each $z$, $\loss_z(\theta) < \infty$ for all
%% $\theta \in \interior \Theta$, and that each loss is proper,
%% so that $\inf_{\theta \in \Theta} \loss_z(\theta) > -\infty$. (This is
%% probably justifiable.)
%
Defining the minimal loss $\poploss_P\opt(\Theta) \defeq
\inf_{\theta \in \Theta} \poploss_P(\theta)$ and letting $\mc{P}(\mc{Z})$ be
the collection of (Borel) probability measures on $\mc{Z}$,
we give conditions under which
the \emph{minimax optimization risk} for the loss $\loss$,
\begin{equation}
  \minimax_n(\loss, \mc{Z}, \Theta)
  \defeq \inf_{\what{\theta}_n}
  \sup_{P \in \mc{P}(\mc{Z})}
  \E_{P^n}
  \left[\poploss_P(\what{\theta}_n(Z_1^n))
    - \poploss_P\opt(\Theta)\right],
  \label{eqn:minimax-opt}
\end{equation}
approaches 0 as $n$ grows---meaning M-estimation (optimization)
is possible---or, conversely, is bounded away from 0.
The definition~\eqref{eqn:minimax-opt}
takes the infimum over measurable
estimators $\what{\theta}_n : \mc{Z}^n \to \Theta$ and the supremum
over distributions $P$ generating $Z_1^n = (Z_1, \ldots, Z_n) \simiid P$.
Crucially, in the minimax risk~\eqref{eqn:minimax-opt}, the
loss $\loss$ and sets $\mc{Z}$ and $\Theta$ are fixed,
so that any lower bound holds not because we may construct a worst-case
function, but because of properties the instance at hand
actually enforces.
%
%% our notion of minimax risk is instance-specific, as we do not
%% take suprema over the losses $\loss$ or the sets $\mc{Z}$, and $\Theta$, and
%% instead treat them as given.
%
This makes clear the sense in which
we consider an estimator to be distribution free: it must
achieve small excess risk $\poploss_P(\what{\theta}) - \poploss_P\opt$
uniformly over sampling distributions $P$, for a given triplet $(\loss, \mc{Z}, \Theta)$.
(We refine definition~\eqref{eqn:minimax-opt} to address
integrability questions when we give formal results.)

If the loss $\loss_z$ is Lipschitz for each $z \in \mc{Z}$, with the
same Lipschitz constant $\lipconst$, then well-known convergence guarantees
for stochastic gradient algorithms~\cite{NemirovskiJuLaSh09}
show that
\begin{equation*}
  \minimax_n(\loss, \mc{Z}, \Theta)
  \le \frac{\lipconst \diam(\Theta)}{\sqrt{n}}.
\end{equation*}
These guarantees are sharp in that
there exist sets $\mc{Z}$ and
$\lipconst$-Lipschitz convex losses $\loss$ for which
$\minimax_n(\loss, \mc{Z}, \Theta) \ge c \frac{\lipconst
  \diam(\Theta)}{\sqrt{n}}$, where $c > 0$ is a numerical
constant~\cite{RaginskyRa11, AgarwalBaRaWa12, DuchiJoMc13, Duchi18}.
In contrast to these ``classic'' lower bounds, which find worst-case
losses and sets $\mc{Z}$ to demonstrate tightness of
convergence results, we provide a precise dividing
line, based on the
properties of the particular loss $\loss$ and its behavior on the
space $\mc{Z} \times \Theta$, for the separation
\begin{equation*}
  \inf_n \minimax_n(\loss, \mc{Z}, \Theta) > 0
  ~~ \mbox{versus} ~~
  \lim_n \minimax_n(\loss, \mc{Z}, \Theta) = 0.
\end{equation*}
(Of course, in the generality we consider here, there is no hope of getting
a convergence rate, as we will show in the sequel.)

We will give the dividing lines
separating these two cases both when $\Theta$ is compact and
when it is unbounded.
In the compact case, our main results show that the following condition
very nearly provides this division:

\conditionbox{\label{cond:compact-case}
  For each compact subset $\Theta_0 \subset \interior \Theta$,
  the functions $\loss_z(\cdot)$ restricted to $\Theta_0$
  are uniformly Lipschitz:
  there exists $\lipconst = \lipconst(\Theta_0) < \infty$ such that
  for each $z \in \mc{Z}$,
  the function $\loss_z(\cdot)$ is $\lipconst$-Lipschitz continuous
  on $\Theta_0$.
}
\noindent
If the condition holds, then $\lim_n \minimax_n(\loss,
\mc{Z}, \Theta) = 0$: there exist algorithms that can solve the
minimization problem with vanishing risk uniformly over all $P$.
If it fails, then excepting some trivialities
about achievable minimizers that we elucidate,
no estimation or optimization procedure can achieve excess
risk tending to zero uniformly over distributions $P$ on $\mc{Z}$.

Questions of truly distribution free inference
date back at least to \citeauthor{BahadurSa56}'s 1956 work on the
impossibility of nonparametric inference of a mean~\cite{BahadurSa56}.
A thread of such work stretches through today; for example, \citet{Donoho88}
shows that two-sided confidence statements, i.e., of the form ``with 95\%
confidence, the underlying distribution $P$ has KL-divergence from the
uniform between 1 and 3'' are generally impossible.
More recently, \emph{conformal prediction} methods perform predictive
inference without making any assumptions on the underlying
distribution~\cite[e.g.][]{VovkGaSh05, Vovk13}.
At the most basic level, conformal prediction methods leverage the ability
to robustly estimate quantiles to provide (marginal) confidence statements
on the predictions of even completely black-box models~\cite{Lei14,LeiWa14,
  LeiGSRiTiWa18, BarberCaRaTi21a}.
This renewed focus on assumption free inference motivates the questions we
investigate in M-estimation.
%% , which separate possible from impossible cases in
%% assumption free M-estimation.

The line of work on \emph{universal consistency} in nonparametric
regression, beginning with
\citet{CoverHa67}, asks related questions to ours, with a slightly
different focus.
There, the data $z =
(x, y)$ (with $y \in \R$ for regression
or $y \in \{0, 1\}$ for classification),
and one wishes to estimate
conditional expectation $f\opt(x) \defeq \E[Y \mid X = x]$.
An estimator $\what{f}_{P_n}$, denoting that
it is a function of the empirical distribution $P_n$
of $(X_i, Y_i)_{i=1}^n \simiid P$, is
\emph{consistent} if $\E[|\what{f}_{P_n}(X_{n+1}) - f\opt(X_{n+1})|] \to 0$ as
$n \to \infty$ whenever $\E[|Y|] < \infty$.
\citet{CoverHa67} show that $k$-nearest neighbor classifiers, where $k =
k_n \to \infty$ but $k_n / n \to 0$, are consistent for classification
problems, and \citet{Stone77} extends these results
to regression.
In distinction from our setting, however, these results are
pointwise (not uniform in the distribution $P$), as uniform
guarantees are impossible without making additional
assumptions on the regression function $f\opt$.
See also the books~\cite{DevroyeGyLu96, GyorfiKoKrWa02}.

In theoretical machine learning, researchers studying
\emph{learnability} investigate
problems similar to ours~\cite{Vapnik98, ShalevShSrSr10}.
One considers prediction problems with data $z = (x, y) \in \mc{X}
\times \mc{Y}$, and seeks a hypothesis $h : \mc{X} \times \mc{Y} \to \mc{Y}$
to minimize $\poploss_P(h) \defeq
\E_P[\loss(h(X), Y)]$, where $\loss$ is a particular loss.
A problem is \emph{learnable} for the class $\mc{H}$ if
\begin{equation}
  \label{eqn:learnability}
  \sup_P \E_P\left[\poploss_P(\what{h}_{P_n}) - \inf_{h \in \mc{H}}
    \poploss_P(h)\right] \to 0,
\end{equation}
where $\what{h}_{P_n}$ is a function of the empirical distribution
$P_n$;
this convergence coincides with convergence of the minimax
risk~\eqref{eqn:minimax-opt}.
Early work considered particular losses
$\loss$;
in the case of binary classification, where $\mc{Y} = \{\pm 1\}$ and
$\loss(\hat{y}, y) = \indic{\hat{y} \neq y}$,
learnability~\eqref{eqn:minimax-opt} holds if and only if
$\mc{H}$ has finite VC-dimension,
which in turn coincides with the uniform convergence
that $\sup_{h \in \mc{H}} |\poploss_{P_n}(h) - \poploss_P(h)| \to 0$
(see~\cite[Ch.~19]{AnthonyBa99} or~\cite{AlonBeCeHa97}).
%
%% Similar results hold for regression under the assumption that $h(x)$ and $y$
%% are uniformly bounded~\cite{AlonBeCeHa97, ShalevShSrSr10}.
%
\citet{ShalevShSrSr10} generalize and extend these results to show that when
the loss $\loss(h(x), y)$ is uniformly bounded,
a class $\mc{H}$ is learnable~\eqref{eqn:learnability}
if and only if there
exist
leave-one-out stable procedures that
asymptotically perform empirical risk minimization,
i.e., $\E[\poploss_{P_n}(\what{h}_{P_n}) - \inf_{h \in \mc{H}}
  \poploss_{P_n}(h)] \to 0$ uniformly in $P$.

Condition~\ref{cond:compact-case} gives distinct conditions from this
prior work. % , as the losses need not be Lipschitz or bounded
We dedicate the remainder of the paper to demonstrating
(and extending) the ways in which
Condition~\ref{cond:compact-case} describes when M-estimation is possible
and impossible.
Section~\ref{sec:minimax} provides the main results, with
Sec.~\ref{sec:minimax-compact} addressing the case when $\Theta$ is a
compact set.
%% as well as providing the preliminaries for the remainder of the
%% paper.
%
In Section~\ref{sec:unbounded-case}, we present
Theorem~\ref{theorem:unbounded-case}, which precisely characterizes when the
minimax risk for M-estimation and optimization can tend to zero when
$\Theta$ may not be compact
(providing a Condition~\ref{cond:unbounded-case} extending
\ref{cond:compact-case} for this case).
Perhaps surprisingly, given the known upper and lower bounds
using Lipschitz constants above,
these results make clear that global Lipschitz continuity of
the losses $\loss_z$ is neither sufficient nor necessary for distribution-free
minimization to be possible.

The negative results in Theorem~\ref{theorem:unbounded-case}
apply to traditional cases of ``robust'' estimation,
such as the absolute loss $\loss_z(\theta) = |\theta - z|$,
corresponding to estimating a median.
Thus, we briefly consider the problem of finding stationary points of
$\poploss_P$ instead of points for which the excess loss $\poploss_P(\theta)
- \poploss_P\opt$ is small in Section~\ref{sec:stationary-points},
which relates more closely to questions of quantile prediction.
We do not characterize the separations in minimax risk, instead giving a
basic convergence result that applies to both differentiable and
non-differentiable functions that shows there are necessarily differences
between M-estimation, as we have defined it, and
obtaining stationary points.
Our discussion (see Section~\ref{sec:discussion}) builds out of these
differences, providing open questions and identifying alternatives to
the choice~\eqref{eqn:minimax-opt} of optimality criterion.

\paragraph{Notation}
We use $\pointmass_z$ to denote a point mass at the point $z$.
For a convex function $f$, $\partial f(\theta_0)$ denotes the
subdifferential of $f$ at $x$, meaning those vectors $g$ for which
$f(\theta) \ge f(\theta_0) + \<g, y - \theta_0\>$ for all $\theta$.
We let $\ball_2 = \{v \mid \ltwo{v} \le 1\}$ be the $\ell_2$ ball (in
dimension that is clear from context), while $\sphere^{d-1} = \{v \in \R^d
\mid \ltwo{v} = 1\}$ denotes the sphere.
We let $\Rup = \R \cup \{+\infty\}$ and $\Rdown = \R \cup \{-\infty\}$
denote the extended up (or down) reals.
For any set $A \subset \mc{Z}$, we let $\mc{P}(A)$ denote the collection of
Borel probability distributions on $A$.

% -*- mode: latex -*- %

\section{Minimax bounds on the risk}
\label{sec:minimax}

To make rigorous that Condition~\ref{cond:compact-case} neatly divides
solvable versus unsolvable problems, we require a few preliminary
definitions to avoid some minutiae, then provide the formal
minimax
lower bounds in Sections~\ref{sec:minimax-compact}
and~\ref{sec:unbounded-case}.

%% an algorithm that produces a point $\what{\theta}$ outside
%% of the set of achievable minimizers can only decrease its risk by
%% projecting onto this set.

\subsection{Restricting to achievable minimizers}

We shall restrict analysis in the minimax lower bounds to what we term
\emph{achievable minimizers}, roughly meaning those points in $\Theta$ that
could plausibly minimize $\poploss_P(\theta) = \E_P[\loss_Z(\theta)]$ for
some $P$ on $\mc{Z}$.
This restricts little; in Appendix~\ref{sec:achievable-minimizers},
where we enumerate properties of and provide technical detail
on the achievable minimizers, we also see that
there are points in the set of achievable minimizers
with smaller risk than those outside the set.
%% We will see (when we enumerate properties of the achievable minimizers
%% in Section~\ref{sec:achievable-minimizers}) that this restricts
%% little---for any point outside the set of achievable minimizers,
%% there are points in the set achieving smaller risk.
%
We also warn the reader that this section simply helps us avoid
pathologies; it might be skipped on a first reading.

\subsubsection{Achievable minimizers in the one-dimensional case}

In the one-dimensional case, the intuition is simple.
Assuming temporarily that
$\loss$ is differentiable, we only consider points
$\theta$ for which there exist $z^+$ and $z^-$
satisfying $\loss_{z^+}'(\theta) > 0$ and $\loss_{z^-}'(\theta) < 0$.
Then immediately, $\theta$ minimizes $\poploss_P$ for a distribution
$P = p \pointmass_{z^+} + (1 - p) \pointmass_{z^-}$ for some
appropriate $p \in (0, 1)$: we cannot eliminate the
possibility that $\theta$ minimizes $\poploss_P$ for some $P$ without
observing data.
Because derivatives of convex functions are monotonic,
if $\loss_z'(\theta) \le 0$ for all $z \in \mc{Z}$, then for
any distribution $P$, there \emph{necessarily} exists a minimizer
$\theta\opt(P)$ of $\poploss_P(\theta) = \E_P[\loss_Z(\theta)]$ with
$\theta\opt(P) \ge \theta$, and similarly when $\loss_z'(\theta) \ge 0$
for all $z$.
We consequently define the minimal
and maximal plausible values
\begin{equation}
  \label{eqn:extreme-minimizers}
  \begin{split}
    \thetamin & = \thetamin(\loss, \mc{Z}, \Theta)
    \defeq \inf\left\{\theta \in \Theta
    \mid \mbox{there~exists}~ z \in \mc{Z}
    ~ \mbox{s.t.} ~
    \loss_z'(\theta) > 0 \right\} \\
    \thetamax & = \thetamax(\loss, \mc{Z}, \Theta)
    \defeq \sup\left\{\theta \in \Theta
    \mid \mbox{there~exists}~ z \in \mc{Z}
    ~ \mbox{s.t.} ~
    \loss_z'(\theta) < 0 \right\}
  \end{split}
\end{equation}
where if the set defining $\thetamin$ is empty we take the right
endpoint $\thetamin = \sup\{\theta \in \Theta\}$ and similarly
$\thetamax = \inf\{\theta \in \Theta\}$ in the complementary case.
(We take extrema over points for which the $\loss_z'(\theta)$ exists;
Appendix~\ref{sec:achievable-minimizers}
shows the differentiability is immaterial.)
It is possible that $\thetamax \le \thetamin$, but this trivializes
the problem, as any $\theta\opt \in [\thetamax, \thetamin]$
minimizes $\poploss_P$
simultaneously for all distributions $P$; see
Lemma~\ref{lemma:achievable-minimizers} in
Appendix~\ref{sec:achievable-minimizers}.
Thus, define
\begin{equation}
  \label{eqn:one-dim-achievable-minimizers}
  \achievable \defeq [\thetamin, \thetamax],
\end{equation}
passing tacitly to the extended reals when $\Theta$ is unbounded to
allow sets of the form $[a, \infty]$ or $[-\infty, b]$
(cf.~\cite[Appendix A.2]{HiriartUrrutyLe93}).
Intuitively, because $\poploss_P'(\theta) \ge 0$ for $\theta >
\thetamax$ and $\poploss_P'(\theta) \le 0$ for $\theta < \thetamin$
for \emph{all} $P$, restricting analysis to the set
$\achievable$ changes nothing.

\subsubsection{Achievable minimizers in the multi-dimensional case}

When $\Theta \subset \R^d$ for general $d \ge 1$, we require a technical
extension to the condition~\eqref{eqn:extreme-minimizers}:
\begin{definition}
  \label{definition:directable}
  A
  set $C \subset \Theta$ is \emph{directable} if for each $\epsilon > 0$,
  there is a compact convex $C_\epsilon$
  satisfying $C \subset C_\epsilon \subset \Theta$
  and a collection of points $\{z_i\}_{i = 1}^k \subset \mc{Z}$
  such that
  \begin{enumerate}[(i)]
  \item $\sup_{\theta \in C_\epsilon} \dist(\theta, C) \le \epsilon$, and
  \item there exists $\delta > 0$ for which
    \begin{equation*}
      \bigcup_{Q \in \mc{P}(\{z_i\}_{i=1}^k)}
      \bigcup_{\theta \in C_\epsilon} \partial
      \left(\poploss_Q + \convexindic{\Theta}\right)(\theta)
      \supset \delta \ball.
    \end{equation*}
  \end{enumerate}
\end{definition}
\noindent
Implicit in the Definition~\ref{definition:directable}
is that $\partial (\poploss_Q + \convexindic{\Theta})$ is non-empty.
In the one-dimensional case, for each $\theta \in \interior \Theta$, the
point $\{\theta\}$ is directable whenever there are $z^+, z^-$ such that
$\loss_{z^+}'(\theta) > 0$ and $\loss_{z^-}(\theta) < 0$, so that this
generalizes the sets defining the extreme plausible
values~\eqref{eqn:extreme-minimizers}.
Intuitively, a directable set means that in an arbitrarily small
neighborhood of the set, we can ``place'' a subgradient in any direction
$u \in \sphere^{d-1}$: there exist $\theta$ and $Q$ for which
$\delta u \in \partial \poploss_Q(\theta)$.

Before defining the analogue of the achievable
set~\eqref{eqn:one-dim-achievable-minimizers}, we give a few
sufficient characterizations for directability to show that such sets are
a typical phenomenon.
The first
condition is that
any time a loss attains its minimizers on a compact set,
the set of minimizers is directable.
See Figure~\ref{fig:directable-set} for an illustration.
\begin{lemma}
  \label{lemma:compact-minimizers-directable}
  Let $Q$ be finitely supported and assume
  $\theta\opt(Q) \defeq \argmin_{\theta \in \Theta} \poploss_Q(\theta)$ is
  compact.
  Then
  $\theta\opt(Q)$ is directable.
\end{lemma}
\noindent
This lemma is an immediate
consequence of Proposition~\ref{proposition:strong-solution-placing}
to come in Section~\ref{sec:solution-placing-conjugates}.

\begin{figure}
  \begin{center}
    \begin{tabular}{cc}
      \includegraphics[width=.6\columnwidth]{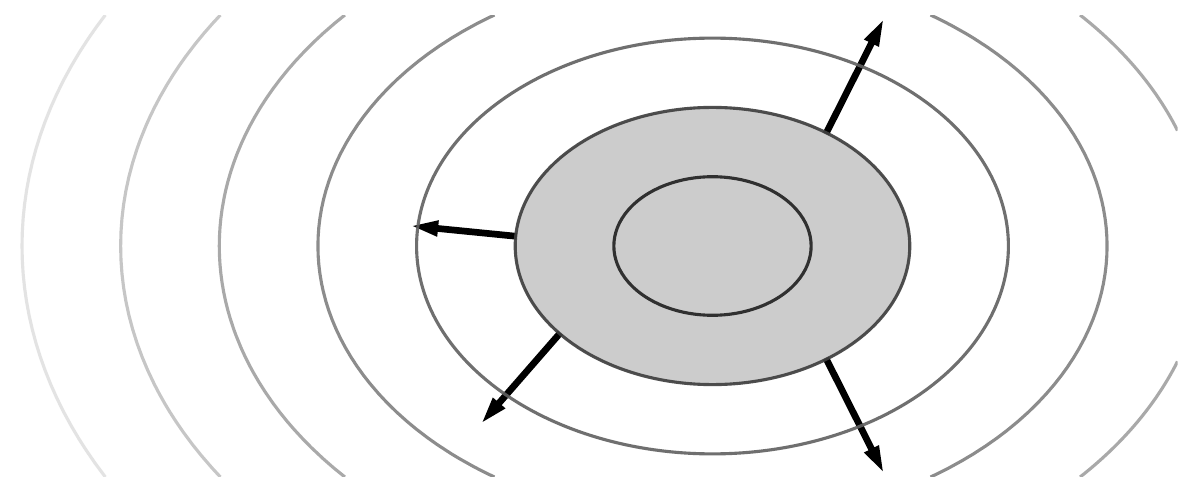} &
      \hspace{-.7cm}
      \begin{minipage}{.42\columnwidth}
        \vspace{-4cm}
        \caption{\label{fig:directable-set}
          A directable set constructed as the sublevel set
          of the function
          $\loss(\theta) = \half \theta_1^2 + \theta_2^2$.
          The gradients point out of the sublevel set $\{\theta
          \mid \half \theta_1^2 + \theta_2^2 \le c\}$ and have norm
          $\ltwo{\nabla \loss(\theta)}$ bounded away from 0.
        }
      \end{minipage}
    \end{tabular}
  \end{center}
\end{figure}

We can also more directly analogize the
condition~\eqref{eqn:extreme-minimizers}.
Recall the \emph{directional
derivative}~\cite[Def.~VI.1.1.1]{HiriartUrrutyLe93} of a convex function $f$
at the point $\theta \in \dom f$ in the direction $v$,
\begin{equation*}
  \deriv f(\theta; v) \defeq
  \lim_{t \downarrow 0} \frac{f(\theta + tv) - f(\theta)}{t}
  = \inf_{t > 0} \frac{f(\theta + tv) - f(\theta)}{t}
  \stackrel{(\star)}{=} \sup_{g \in \partial f(\theta)}
  \<g, v\>
\end{equation*}
where the limit always exists,
$\deriv f(\theta; v)$ is convex and positively homogeneous in $v$, and the
equality~$(\star)$ holds if $\partial f(\theta) \neq \emptyset$.
If $f$ is differentiable at $\theta$, then $\deriv f(\theta; v)
= \<\nabla f(\theta), v\>$.
We then say that a point $\theta$ is \emph{achievable} if
for each $v \in \sphere^{d-1}$,
\begin{equation}
  \label{eqn:positive-derivative}
  \mbox{there~exists~} z \in \mc{Z}
  ~~ \mbox{with} ~~
  \deriv \loss_z(\theta; v) > 0.
\end{equation}
%% in complete analogy to the condition~\eqref{eqn:extreme-minimizers}.
%
The next lemma shows that achievable points are also directable
(see Appendix~\ref{sec:proof-positive-derivatives-directable} for proof).
%% , and that
%% we can construct $Q$ for which $\theta \in \argmin_\theta
%% \poploss_Q(\theta)$.
%
\begin{lemma}
  \label{lemma:positive-derivatives-directable}
  If $\theta \in \interior \Theta$ is
  achievable~\eqref{eqn:positive-derivative}, then $\{\theta\}$ is
  directable (Definition~\ref{definition:directable}).
  In the Definition~\ref{definition:directable},
  we may take the set $C_\epsilon = \{\theta\}$ for all $\epsilon > 0$.
\end{lemma}
%
%% \noindent
%% We prove the lemma in
%% Appendix~\ref{sec:proof-positive-derivatives-directable}.

Typical losses in statistics and machine learning satisfy
these achievability conditions; an example may help.

\begin{example}[Generalized linear models (GLMs)]
  \label{example:glms}
  Consider a GLM for predicting $y \in \mc{Y} \subset
  \R$ from $x \in \mc{X} \subset \R^d$ with parameteric model $p_\theta(y
  \mid x) = \exp(y \<x, \theta\> - A(\theta \mid x)) h(y)$, where $h$ is a
  carrier and $A$ is the log-partition function~\cite{Brown86,
    WainwrightJo08}.
  Then because
  \begin{equation*}
    \nabla A(\theta \mid x) = \E_\theta[Y \mid X = x] \cdot x
  \end{equation*}
  (where $\E_\theta$ denotes expectation under the model $p_\theta$), the
  log loss $\loss_{x,y}(\theta) = -y\<x, \theta\> + A(\theta \mid x)$ is
  $\mc{C}^\infty$ and satisfies $\nabla \loss_{x,y}(\theta) = x(\E_\theta[Y
    \mid x] - y)$.
  As $\E_\theta[Y \mid X = x] \in \interior \conv(\mc{Y})$, so long as
  $\mc{X}$ contains \emph{any} linearly independent set of $d$ points,
  inequality~\eqref{eqn:positive-derivative} holds for all $\theta \in
  \R^d$.
\end{example}

\newcommand{\unconstrained}{\mc{U}}

We move to the definition of the achievable set in the
general $d$-dimensional case, which we perform by considering
the outer construction of closed convex sets as intersections of
half-spaces.
To that end, for $v \in \R^d$ and $t \in \R$, define the
half space $H_{v,t} \defeq \{\theta \mid \<v, \theta\> \le t \ltwo{v}\}$,
and call the pair $(v, t) \in \R^d \times \R$
\emph{unconstraining} if
\begin{equation*}
  H_{v,t} \cap C \neq \emptyset
  ~~ \mbox{for~all~directable}~ C \subset \Theta.
\end{equation*}
That is, $H_{v,t}$ has non-trivial intersection with all
directable sets $C$.
Call the collection of such pairs $\unconstrained$.
When $\Theta$ is compact,
any unconstraining half space $H_{v,t}$ satisfies
\begin{equation}
  \label{eqn:halfspace-unconstraining}
  \poploss_Q\opt(H_{v, t} \cap \Theta)
  = \poploss_Q\opt(\Theta)
\end{equation}
for all $Q \in \Pdiscrete$, because $\argmin_{\theta \in \Theta}
\poploss_Q(\theta)$ is a non-empty compact convex set and
Lemma~\ref{lemma:compact-minimizers-directable} applies.
In fact, equality~\eqref{eqn:halfspace-unconstraining} extends to any
(Borel)
probability measure $P$ for which the loss $\loss_z$ is integrable; see
Lemma~\ref{lemma:discrete-or-borel} in
Appendix~\ref{sec:multi-dim-achievable}.
More generally, whenever $Q$ is such that $\argmin_{\theta \in \Theta}
\poploss_Q(\theta)$ exists and is compact,
the equality~\eqref{eqn:halfspace-unconstraining} holds for
all $(v, t) \in \unconstrained$.

Noting that $v = \zeros$ allows $H_{v,t} = \R^d$,
we may then define the achievable set
\begin{equation}
  \label{eqn:achievable-minimizers}
  \achievable \defeq
  \bigcap_{(v, t) \in \unconstrained} H_{v,t}.
\end{equation}
Clearly, $\achievable \subset \Theta$ as $\Theta$ is closed convex,
and
in one dimension, the half-spaces $H_{1, t} =
\openleft{-\infty}{t}$ and $H_{-1, t} = \openright{t}{\infty}$
show that
definitions~\eqref{eqn:one-dim-achievable-minimizers}
and~\eqref{eqn:achievable-minimizers}
coincide;
see also Lemma~\ref{lemma:achievable-as-all-argmins} in
Section~\ref{sec:achievable-minimizers}.

The next example highlights why, even in statistical or other optimization
scenarios, we may wish to consider settings in which the achievable
points~\eqref{eqn:positive-derivative} are not dense.

\begin{example}[Robust losses for location]
  Consider estimating a location via a robust loss $\loss_z(\theta) =
  \norm{\theta - z}$ for $\Theta = \R^d$, where $z \in \mc{Z} \subset \R^d$
  and $\norm{\cdot}$ is a norm on $\R^d$.
  Even if $\mc{Z}$ is discrete (e.g., taking values with integer
  coordinates), if $\mc{Z}$ covers enough space that $\conv(\mc{Z}) = \R^d$,
  then because $\theta = z$ uniquely minimizes $\loss_z(\theta)$ for each
  $z$, the set of unconstraining
  half-spaces in the definition~\eqref{eqn:achievable-minimizers}
  consists of $\{H_{\zeros,t}\}_{t \in \R}$, so $\achievable = \R^d$.
\end{example}

We therefore make the following standing assumption without further comment
and with essentially no loss of generality except that we could replace
$\Theta$ in each theorem with $\achievable$.
\begin{assumption}
  \label{assumption:achievable}
  The achievable set coincides with
  $\Theta$, that is, $\Theta = \achievable$.
\end{assumption}

\subsection{The minimax bounds: compact domains}
\label{sec:minimax-compact}

With these administrivia out of the way, we
move delineating when
completely distribution free minimization is possible
in the case that $\Theta$ is a compact convex set.
We will prove somewhat more careful results than bounds
on the general minimax risk~\eqref{eqn:minimax-opt}.
For many of our lower bounds, we need only consider
discrete distributions, so letting the set $\Pdiscrete(\mc{Z})$
be the collection of finitely supported probability distributions
on $\mc{Z}$,
define
\begin{equation*}
  \minimaxlow_n(\loss, \mc{Z}, \Theta)
  \defeq \inf_{\what{\theta}_n}
  \sup_{P \in \Pdiscrete(\mc{Z})}
  \E_{P^n}\left[\poploss_P(\what{\theta}_n(Z_1^n))
    - \poploss_P\opt(\Theta)\right].
\end{equation*}
We also slightly refine the risk~\eqref{eqn:minimax-opt}
to make the results sensible.
Let $\mc{P}_\loss(\mc{Z})$ be the set of Borel probability
measures for which $\poploss_P(\theta) = \E_P[\loss_Z(\theta)]$ is
\emph{well-defined}, meaning that
\begin{equation}
  \label{eqn:well-defined}
  \E_P\left[|\loss_Z(\theta)|\right] < \infty
  ~ \mbox{for}~ \theta \in \interior \Theta
  ~~ \mbox{and} ~~
  \E_P\left[\hinge{-\loss_Z(\theta)}\right] < \infty
  ~~ \mbox{for~} \theta \in \Theta.
\end{equation}
Thus we may have $\poploss_P(\theta) = +\infty$, but $\poploss_P(\theta) >
-\infty$ and $\inf_{\theta \in \Theta} \poploss_P(\theta) < \infty$, and
$\poploss_P$ is convex, proper, and lower
semicontinuous~\cite[Ch.~7.2.4]{ShapiroDeRu14}.
Then we use the refinement
\begin{equation*}
  \minimax_n(\loss, \mc{Z}, \Theta)
  \defeq \inf_{\what{\theta}_n}
  \sup_{P \in \mc{P}_\loss(\mc{Z})}
  \E_{P^n}\left[\poploss_P(\what{\theta}_n(Z_1^n))
    - \poploss_P\opt(\Theta)\right]
\end{equation*}
of the minimax risk~\eqref{eqn:minimax-opt}.
Clearly $\minimax_n \ge \minimaxlow_n$.

We have our first two main theorems.
\begin{theorem}
  \label{theorem:minimax-lower}
  Let Assumption~\ref{assumption:achievable} hold and $\Theta$ be compact.
  If Condition~\ref{cond:compact-case} fails,
  then
  \begin{equation*}
    \inf_n \minimaxlow_n(\loss, \mc{Z}, \Theta)
    > 0.
  \end{equation*}
\end{theorem}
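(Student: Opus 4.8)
\emph{Proof approach.} The plan is to run a two-point (Le Cam) argument, exploiting the one feature that $\lipconst_{\theta^*}=\infty$ buys: we can make a population loss \emph{arbitrarily steep near $\theta^*$ while using an arbitrarily small probability mass}. Placing mass $\rho$ on a point $z$ with $g\in\partial\loss(\theta^*,z)$ contributes $\rho g$ to the slope of $\poploss_P$ at $\theta^*$, and when $\lipconst_{\theta^*}=\infty$ this product $\rho g$ can be held bounded below even as $\rho\to 0$. So I would build two distributions, indistinguishable from $n$ samples, that nonetheless force any estimator to pay a \emph{fixed} positive excess risk under one of them.

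In detail: fix $\theta^*\in\interior\Theta$ with $\lipconst_{\theta^*}=\infty$; assume $\lipconst_{\theta^*}^+=\infty$ (the case $\lipconst_{\theta^*}^-=-\infty$ is mirror-symmetric, swapping left and right), and pick $z_k\in\mc{Z}$, $g_k\in\partial\loss(\theta^*,z_k)$ with $g_k\to\infty$. Since $\Theta=\achievable$ (Assumption~\ref{assumption:achievable}) and $\theta^*$ is interior, $\theta^*<\thetamax$, so the definition~\eqref{eqn:extreme-minimizers} of $\thetamax$ supplies $\theta_2\in(\theta^*,\thetamax]\subseteq\Theta$ and an ``anchor'' $z^-\in\mc{Z}$ with $\loss'(\theta_2,z^-)=-c<0$; monotonicity of subgradients and finiteness of $\loss(\cdot,z^-)$ near the interior point $\theta^*$ give $\partial\loss(\theta^*,z^-)\subseteq[-L^-,-c]$ for some finite $L^-\ge c>0$. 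Put $\eta\defeq(\theta_2-\theta^*)/2$ and $\rho\defeq 1/(2n)$, take $k=k(n)$ large enough (possible since $g_k\to\infty$) that $\rho g_{k(n)}-L^-\ge c$, and compare
\[
  P_1\defeq\pointmass_{z^-},\qquad
  P_0\defeq(1-\rho)\pointmass_{z^-}+\rho\,\pointmass_{z_k}\ \in\ \Pdiscrete(\mc{Z}).
\]

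The two facts to verify, both from convexity (compactness of $\Theta$ makes the minima below finite): (i) $\loss(\cdot,z^-)$ is nonincreasing through $\theta_2$ with slope $-c$ there, so $\poploss_{P_1}(\theta)-\poploss_{P_1}\opt\ge c(\theta_2-\theta)$ for $\theta\le\theta_2$, hence the excess risk under $P_1$ is $\ge c\eta$ whenever $\theta\le\theta^*+\eta$; and (ii) $\partial\poploss_{P_0}(\theta^*)\ni(1-\rho)s^-+\rho g_k\ge\rho g_k-L^->0$ for $s^-\in\partial\loss(\theta^*,z^-)$, so $\poploss_{P_0}(\theta)-\poploss_{P_0}\opt\ge(\rho g_k-L^-)(\theta-\theta^*)\ge c(\theta-\theta^*)$ for $\theta\ge\theta^*$, hence the excess risk under $P_0$ is $\ge c\eta$ whenever $\theta\ge\theta^*+\eta$. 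Since $P_1^n$ is a point mass, $\mathrm{TV}(P_0^n,P_1^n)=1-(1-\rho)^n\le n\rho=\tfrac12$. For any estimator $\what\theta_n$, splitting on the event $A\defeq\{\what\theta_n\ge\theta^*+\eta\}$ and using (i)--(ii),
\[
  \sup_{P\in\Pdiscrete(\mc{Z})}\E_{P^n}\!\big[\poploss_P(\what\theta_n)-\poploss_P\opt\big]
  \ \ge\ \tfrac{c\eta}{2}\big(P_0^n(A)+P_1^n(A^c)\big)
  \ \ge\ \tfrac{c\eta}{2}\big(1-\mathrm{TV}(P_0^n,P_1^n)\big)
  \ \ge\ \tfrac{c\eta}{4},
\]
and since $c\eta=c(\theta_2-\theta^*)/2$ does not depend on $n$, taking $\inf_{\what\theta_n}$ gives $\inf_n\minimaxlow_n(\loss,\mc{Z},\Theta)\ge c(\theta_2-\theta^*)/8>0$.

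I expect the main obstacle to be step (ii): cleanly converting ``$g_k\in\partial\loss(\theta^*,z_k)$ with $g_k$ large'' into a lower bound on the slope of the \emph{mixture} $\poploss_{P_0}$ that is valid for \emph{all} $\theta\ge\theta^*$ rather than only locally, and confirming that the anchoring constants $c,L^-$ are finite and strictly positive purely because $\theta^*\in\interior\Theta$ and $z^-$ is a single fixed point (so its subdifferential at an interior point is a bounded set and its slope at $\theta_2$ is strictly negative). The conceptual crux is recognizing that $\rho g_{k(n)}$ can be kept $\Omega(1)$ even though $\rho=\Theta(1/n)$ --- this is exactly what replaces the usual $\Theta(1/\sqrt n)$ minimax rate by a rate bounded away from zero.
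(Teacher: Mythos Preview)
Your argument is correct. Both proofs are two-point Le Cam constructions, but the specific pairs of distributions differ. The paper fixes the \emph{same} support $\{z^+,z_\lipconst\}$ for both alternatives (an anchor with positive slope and a point with arbitrarily large negative slope) and perturbs the mixture weight around a ``balanced'' value $p$, then measures separation via $\dopt$ (Eq.~\eqref{eqn:minimax-to-testing}). You instead take $P_1=\pointmass_{z^-}$ as one alternative and contaminate it by a mass $\rho=1/(2n)$ on a point with huge positive slope for $P_0$; this makes the TV bound trivial ($1-(1-\rho)^n\le n\rho$) and lets you argue directly via the event $\{\what\theta_n\ge\theta^*+\eta\}$ without introducing $\dopt$. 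Your construction is a bit leaner; the paper's has the minor advantage that both alternatives are genuine two-point mixtures (neither is degenerate), which can be aesthetically preferable but is not needed here. The conceptual core---keeping $\rho\cdot g_{k(n)}$ bounded away from zero while $\rho=O(1/n)$---is identical in both.
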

\noindent
By considering estimators $\what{\theta}_n$ obtained by averaging $n$ steps
of the stochastic subgradient method over $\Theta_n$, where $\Theta_n
\subset \Theta$ are somewhat carefully chosen subsets (depending on $\mc{Z}$
and $\loss$) satisfying $\Theta_n \uparrow \Theta$, we can also show a
converse achievability result.
\begin{theorem}
  \label{theorem:minimizable}
  If Condition~\ref{cond:compact-case} holds, then 
  \begin{equation*}
    \lim_n \minimax_n(\loss, \mc{Z}, \Theta) = 0.
  \end{equation*}
\end{theorem}
\noindent
This result provides no rate of convergence, which must
generally depend on the structure of $\loss$:
without extra assumptions on $\loss$, the rate may be arbitrarily
slow, even in one-dimensional cases with finite $\mc{Z}$,
as the next result shows.
\begin{proposition}
  \label{proposition:no-rate}
  Let $\mc{Z}=\{0,1\}$ and $\Theta=[0,1]$.
  Then there exists a numerical constant $c > 0$ such that
  for any continuous increasing rate function $r: \openright{1}{\infty}
  \to \R_+$, there exists a loss $\loss$ satisfying
  Condition~\ref{cond:compact-case} with
  Lipschitz constant $\lipconst([\epsilon, 1 - \epsilon])
  \le r^{-1}(r(1) / \epsilon)$
  such that
  \begin{equation*}
    \minimaxlow_n(\loss, \mc{Z}, \Theta) \ge
    c \, \frac{r(1)}{r(2n)}.
  \end{equation*}
\end{proposition}

We divide the proofs of the results into several parts,
which we defer, providing an
outline in Section~\ref{sec:proof-outline}.
%
%% We provide commentary on the achievable minimizers, along with several
%% equivalent characterizations of the achievable
%% set~\eqref{eqn:achievable-minimizers}, in
%% Section~\ref{sec:achievable-minimizers} after
We provide relevant background
on convexity in Section~\ref{sec:convex-analysis}.
We separate the arguments for Theorem~\ref{theorem:minimax-lower} in the
one-dimensional case that $\Theta \subset \R$, as the convex analytic
details simplify considerably, though the intuition for the constructions
remains consistent in the higher-dimensional cases; see
Section~\ref{sec:proof-bounded-lower-one-d}.
Section~\ref{sec:proof-compact-lower-general} contains the proof of
Theorem~\ref{theorem:minimax-lower} in the $d$-dimensional case.
We provide the proof of achievability (Theorem~\ref{theorem:minimizable})
in Section~\ref{sec:proof-minimizable},
showing its essential sharpness (Proposition~\ref{proposition:no-rate})
in Section~\ref{sec:proof-no-rate}.

Before continuing, however, we present two examples to help
delineate Theorems~\ref{theorem:minimax-lower} and~\ref{theorem:minimizable}.

\begin{example}[Log loss minimization]
  \label{example:log-loss}
  For $z \in \mc{Z} = \{0, 1\}$ and
  $\theta \in \Theta \defeq [0, 1]$,
  consider
  \begin{equation*}
    \loss_z(\theta) = z \log \frac{1}{\theta}
    + (1 - z) \log \frac{1}{1 - \theta}.
  \end{equation*}
  The loss $\loss_z(\theta)$ is Lipschitz in $\theta$
  over any compact subset interior to $[0, 1]$, though it
  fails to be Lipschitz at the boundaries.
  Nonetheless, $\loss$ satisfies Condition~\ref{cond:compact-case}, so
  Theorem~\ref{theorem:minimizable} shows that minimization of
  $\E[\loss_Z(\theta)]$ over $\theta \in [0, 1]$ is possible.

  The log loss for estimating multinomials similarly satisfies
  Condition~\ref{cond:compact-case}:
  for $z \in \mc{Z} = \{1, \ldots, k\}$ and
  $\Theta = \{\theta \in \R^{k-1}_+ \mid \<\ones, \theta\> \le 1\}$,
  the losses
  \begin{equation*}
    \loss_z(\theta) =
    \sum_{j = 1}^{k - 1} \indic{z = j}
    \log \frac{1}{\theta_j}
    + \indic{z = k} \log \frac{1}{1 - \<\ones, \theta\>}
  \end{equation*}
  are Lipschitz on any convex compact subsets interior to $\Theta$.
\end{example}

\begin{example}[Squared loss minimization]
  Consider the parameter space $\Theta = [0, 1]$ and sample space $\mc{Z} =
  \R$, and take the loss $\loss_z(\theta) = \half (\theta - z)^2$.
  Then $\sup_z |\loss_z'(\theta)| = \sup_z |\theta - z| = \infty$ for each
  $\theta$, and Theorem~\ref{theorem:minimax-lower} shows that distribution
  free minimization is impossible.
\end{example}

\subsection{The minimax bounds: unbounded domains}
\label{sec:unbounded-case}

Now we consider the case that $\Theta$ is unbounded, and we continue to make
the standing Assumption~\ref{assumption:achievable} that $\Theta$ consists
of achievable minimizers~\eqref{eqn:achievable-minimizers}.
The limiting minimax behavior in this case
depends essentially on whether simultaneously for all finitely
supported distributions $Q$,
there exist compact sets attaining near minimizers.
In particular, recalling $\poploss_Q\opt(\Theta) = \inf_{\theta \in \Theta}
\poploss_Q(\theta)$, the relevant conditions become the following.

\conditionbox{\label{cond:unbounded-case}
  In addition to Condition~\ref{cond:compact-case},
  for all $\epsilon > 0$, there
  exists a compact $\Theta_0 \subset \Theta$ such that
  \begin{equation}
    \label{eqn:cannot-be-far-in-compact}
    \inf_{\theta \in \Theta_0} \left[\poploss_Q(\theta)
      - \poploss_Q\opt(\Theta)\right] \le \epsilon
  \end{equation}
  for all $Q \in \Pdiscrete(\mc{Z})$.
}
\noindent
In the one-dimensional case,
a simpler criterion is sufficient:
that for all $\epsilon > 0$, there exists
a compact $\Theta_0 \subset \Theta$ such that
\begin{equation}
  \label{eqn:cannot-be-far-in-interval}
  \inf_{\theta \in \Theta_0} \left[\loss_z(\theta)
    - \loss_z\opt(\Theta)\right] \le \epsilon
\end{equation}
for all $z \in \mc{Z}$.
(See Lemma~\ref{lemma:conditions-same-one-dim} in
Section~\ref{sec:proof-unbounded-lower}.)
While this simplification does not appear to extend to $d$-dimensional
cases, we have the following characterization.

\begin{theorem}
  \label{theorem:unbounded-case}
  Let Assumption~\ref{assumption:achievable} hold.
  If Condition~\ref{cond:unbounded-case} fails, then
  \begin{equation*}
    \inf_n \minimaxlow_n(\loss, \mc{Z}, \Theta) > 0.
  \end{equation*}
\end{theorem}

\noindent
We have the complementary result as well.
\begin{proposition}
  \label{proposition:unbounded-upper}
  If Condition~\ref{cond:unbounded-case} holds,
  then
  \begin{equation*}
    \lim_{n \to \infty} \minimax_n(\loss, \mc{Z}, \Theta) = 0.
  \end{equation*}
\end{proposition}

We provide the proof of the one-dimensional
case of Theorem~\ref{theorem:unbounded-case}
in
Section~\ref{sec:proof-unbounded-lower},
demonstrating the extension to the higher dimensional case
in Section~\ref{sec:proof-general-unbounded-lower}.
Given Theorem~\ref{theorem:minimizable},
Proposition~\ref{proposition:unbounded-upper} admits a relatively
simple proof once we demonstrate that Condition~\ref{cond:unbounded-case}
immediately extends from the collection $\Pdiscrete$ to
the well-defined distributions $\mc{P}_\loss$ defined by
the condition~\eqref{eqn:well-defined}, which
we provide in Section~\ref{sec:proof-unbounded-upper}.
Before giving the proofs, it is instructive to consider two examples,
highlighting the ways that finite Lipschitz constants are neither necessary
nor sufficient for minimization to be possible.

\begin{example}
  \label{example:exp-loss}
  Consider the sample space $\mc{Z} = \{-1, 1\}$ and exponential loss
  $\loss_z(\theta) = e^{z \theta}$.
  While $\loss$ is $\mc{C}^\infty$, none of its derivatives are
  Lipschitz on $\R$.
  Nonetheless, it is clear that for any $0 < \epsilon \le 1$,
  we have
  \begin{equation*}
    \inf_{|\theta|
      \le \log \frac{1}{\epsilon}} \loss_z(t)
    = \epsilon = \loss_z\opt + \epsilon.
  \end{equation*}
  Proposition~\ref{proposition:unbounded-upper}
  shows that $\minimax_n(\loss, \mc{Z}, \R) \to 0$ as
  $n \to \infty$.
  %% Nonetheless, for any $t \in \R$, we have the sets
  %% \begin{equation*}
  %%   \left\{z \in \mc{Z} \mid \deriv_+ \loss_z(t) \ge 0\right\}
  %%   = \{1\}
  %%   ~~ \mbox{and} ~~
  %%   \left\{z \in \mc{Z} \mid \deriv_- \loss_z(t) \le 0\right\}
  %%   = \{-1\},
  %% \end{equation*}
  %% and so
  %% \begin{equation*}
  %%   \leftgapfunc[t](\theta) = \leftgapfunc(\theta)
  %%   = e^\theta
  %%   ~~ \mbox{and} ~~
  %%   \rightgapfunc[t](\theta) = \rightgapfunc(\theta) = e^{-\theta}.
  %% \end{equation*}
  %% Clearly $\inf_\theta \leftgapfunc(\theta) = 0$,
  %% and similarly for $\rightgapfunc$,
  %% so the second part of Theorem~\ref{theorem:unbounded-case}
  %% shows that $\minimaxup_n(\loss, \mc{Z}, \R) \to 0$
  %% as $n \to \infty$.
\end{example}

On the other hand, the absolute loss is globally Lipschitz,
and yet the associated minimax risk over $\R$ is infinite:

\begin{example}[Median and quantile estimation]
  \label{example:quantile-estimation}
  Consider the sample space $\mc{Z} = \R$ and, for a fixed $\alpha \in (0,
  1)$, the $\alpha$-quantile losses
  \begin{equation*}
    \loss_z(\theta)
    \defeq \alpha \hinge{\theta - z} + (1 - \alpha)
    \hinge{z - \theta}
    - \left(\alpha \hinge{-z} + (1 - \alpha) \hinge{z}\right),
  \end{equation*}
  where the subtraction guarantees
  that $\poploss_P(\theta)$ is well-defined~\eqref{eqn:well-defined}
  for all distributions
  $P$ on $\R$.
  %
%%   When $\alpha = \half$, this gives a scaled multiple
%%   of the absolute loss $\loss_z(\theta) = |\theta - z|$.
  %
  Calculating left and right derivatives,
  \begin{align*}
    \deriv_+ \poploss_P(\theta)
    & = \alpha P(Z \le \theta)
    - (1 - \alpha) P(Z > \theta)
    = P(Z \le \theta) - (1 - \alpha) \\
    % = \alpha - P(\theta < Z) \\
    \deriv_- \poploss_P(\theta)
    & = \alpha P(Z < \theta)
    - (1 - \alpha) P(Z \ge \theta)
    %  = \alpha - P(\theta \le Z)
    = P(Z < \theta) - (1 - \alpha),
  \end{align*}
  so that if $\theta\opt$ is a $(1 - \alpha)$-quantile of $Z$,
  i.e., satisfies $P(Z < \theta\opt) \le 1 - \alpha
  \le P(Z \le \theta\opt)$,
  we obtain $\deriv_-\poploss_P(\theta\opt) \le 0
  \le \deriv_+ \poploss_P(\theta\opt)$.
  Nonetheless,
  Condition~\ref{cond:unbounded-case} fails:
  consider a compact interval $[t_0, t_1]$.
  As $\loss_z(\theta)$ attains its minimum at $\theta = z$,
  for $z \ge t_1$ we obtain
  \begin{equation*}
    \inf_{\theta \le t_1} \loss_z(\theta)
    - \loss_z\opt
    = (1 - \alpha) \hinge{z - t_1},
  \end{equation*}
  which clearly tends to $+\infty$ as $z \to \infty$,
  contradicting inequality~\eqref{eqn:cannot-be-far-in-interval}.

  In this case, a more direct argument
  yields that $\minimaxlow_n = +\infty$, which we include
  only for its simplicity.
  Let $z_0 < z_1$ be arbitrary and to be chosen,
  and for $0 < \delta \le \alpha$ (also to be chosen) define the two-point
  distributions
  \begin{equation*}
    P_0 = (1 - \alpha + \delta) \pointmass_{z_0}
    + (\alpha - \delta) \pointmass_{z_1}
    ~~ \mbox{and} ~~
    P_1 = (1 - \alpha - \delta) \pointmass_{z_0}
    + (\alpha + \delta) \pointmass_{z_1}.
  \end{equation*}
  Then $P_0$ has unique $(1 - \alpha)$-quantile
  $\theta\opt_0 = z_0$, while
  $P_1$ has unique $(1 - \alpha)$-quantile $\theta\opt_1 = z_1$.
  Recalling the bound~\eqref{eqn:minimax-to-testing},
  a quick calculation yields
  $\dopt(\poploss_{P_0}, \poploss_{P_1})
  \ge \frac{\delta}{2} |z_1 - z_0|$,
  so
  \begin{equation*}
    \minimaxlow_n(\loss, \R, \R)
    \ge \frac{\delta}{4} |z_1 - z_0|
    \left(1 - \tvnorm{P_0^n - P_1^n}\right).
  \end{equation*}
  Set $\delta = \frac{1}{2n}$, so
  $\tvnorm{P_0^n - P_1^n} \le n \tvnorm{P_0 - P_1}
  \le n \delta = \half$.
  Then taking $|z_1 - z_0| \to \infty$ gives $\minimaxlow_n = +\infty$.
\end{example}

\noindent
In Section~\ref{sec:stationary-points}, we revisit this example in the
context of finding stationary points.

\subsection{Proof outlines}
\label{sec:proof-outline}

Each of our lower bounds follows a standard recipe of reducing optimization
to a problem of testing between two distributions~\cite{AgarwalBaRaWa12,
  Duchi18, Wainwright19}, but because the losses have no particular form
except that they cannot be uniformly Lipschitz, we must carefully
construct the distributions $P$ so that optimizing well indeed implies
testing between two statistically indistinguishable distributions.
For two convex functions, define the
optimization ``distance''
\begin{equation}
  \label{eqn:opt-distance}
  \dopt(f_0, f_1) \defeq \sup\left\{\delta \ge 0 \mid
  \begin{array}{ll} f_0(\theta) < f_0\opt + \delta
    ~~ \mbox{implies} ~~
    f_1(\theta) \ge f_1\opt + \delta & \mbox{and} \\
    f_1(\theta) < f_1\opt + \delta
    ~~ \mbox{implies} ~~
    f_0(\theta) \ge f_0\opt + \delta
  \end{array}
  \right\}.
\end{equation}
Then~\cite[Eq.~(5.2.3)]{Duchi18}
for any two (finitely supported) distributions $P_0$ and $P_1$ on $\mc{Z}$,
\begin{equation}
  \label{eqn:minimax-to-testing}
  \minimaxlow_n(\loss, \mc{Z}, \Theta)
  \ge \half \dopt(\poploss_{P_0}, \poploss_{P_1})
  \left(1 - \tvnorm{P_0^n - P_1^n}\right)
\end{equation}
Each of our lower bound proofs thus pursues competing goals: to construct
well-separated losses $\poploss_{P_0}$ and $\poploss_{P_1}$ while the
distributions $P_0, P_1$ generating them are too close to distinguish.
We give an overview of the constructions here, leaving
details to the proofs in Section~\ref{sec:main-proofs}.

Let us focus for now on the case that Condition~\ref{cond:compact-case}
fails, so that there are points $\theta_0 \in \interior \Theta$ with arbitrarily large
gradient magnitude $\lipconst_{\theta_0} \defeq \sup_{z \in \mc{Z}}
\sup_{g \in \partial \loss_z(\theta_0)} \ltwo{g}$.
Let $z_0 \in \mc{Z}$ and $g \in \partial \loss_{z_0}(\theta_0)$ be one of
these arbitrarily large gradients, defining the normalized version $v = g /
\ltwo{g}$.
Then because of the standing Assumption~\ref{assumption:achievable}, taking
$t = \<v, \theta_0\> + \alpha$ for some constant $\alpha > 0$, the
half-space $\{\theta \mid \<v, \theta\> \ge t\}$ contains a
directable set $C$ (e.g., a finitely supported distribution $Q$ defining a
compact set $\theta\opt(Q)$ of minimizers, as in
Lemma~\ref{lemma:compact-minimizers-directable} and
Fig.~\ref{fig:directable-set}).
Take this point $\theta_0$ and the direction $v$,
and let $Q$ be the finitely supported
distribution so that $\poploss_Q$ satisfies the inclusion in
Definition~\ref{definition:directable};
as Figure~\ref{fig:plant-hyperplane} illustrates,
we have effectively ``placed'' a desired
gradient at some distance from $\theta_0$.

\begin{figure}[t]
  \begin{tabular}{cc}
    \begin{overpic}[width=.7\columnwidth]{%,grid]{%
        Figures/planted-solution-hyperplane}
      \put(22, 17){$\theta_0$}
      \put(30,29){$v$}
      \put(31.5,43){$\{\theta \mid \<v, \theta\> \ge t\}$}
      \put(62,10){$-v$}
      \put(48,30){$v$}
      \put(64,30){$\{\theta \mid \poploss_Q(\theta)
        \le \poploss_Q\opt + \epsilon\}$}
    \end{overpic} &
    \hspace{-1cm}
    \begin{minipage}{.35\columnwidth}
      \vspace{-5cm}
      \caption{\label{fig:plant-hyperplane} Constructing
        the lower bound.
        At the point $\theta_0$, $\loss_z(\theta_0)$
        has arbitrarily large
        gradient $g$ in the direction $v$.
        The sublevel set $\poploss_Q(\theta) \le \poploss_Q\opt
        + \epsilon$, contained in the halfspace $\{\theta \mid \<v, \theta\>
        \ge t\}$ strictly separated from $\theta_0$,
        has a point with gradient $\nabla \poploss_Q(\theta) = -v$.}
    \end{minipage}
  \end{tabular}
\end{figure}

To optimize $\poploss_Q$ to accuracy better than some constant $c$, one
must produce $\theta$ satisfying $\<v, \theta\> \ge t$,
as Figure~\ref{fig:plant-hyperplane} evidences.
On the other hand,
defining the mixture distribution
\begin{equation*}
  P_1 \defeq \left(1 - \frac{1}{n}\right) Q
  + \frac{1}{n} \pointmass_{z_0}
\end{equation*}
that ``hides'' the observation $z_0$ by giving it small probability,
to optimize the corresponding loss $\poploss_{P_1}(\theta) =
(1 - \frac{1}{n}) \poploss_Q(\theta) + \frac{1}{n} \loss_{z_0}(\theta)$ to
accuracy better than a constant, we must have $\<v, \theta\> < t$, as
otherwise, the loss $\loss_{z_0}(\theta)$ must grow arbitrarily.
In particular, we see that
\begin{equation*}
  \dopt(\poploss_Q, \poploss_{P_1}) \ge c
\end{equation*}
for a constant $c > 0$.
The inequality~\eqref{eqn:minimax-to-testing}
then shows that we must control
$\tvnorm{Q^n - P_1^n}$ when
$\tvnorm{Q - P_1} \le \frac{1}{n}$.
The next lemma (whose standard proof we provide for completeness in
Appendix~\ref{sec:proof-tv-and-hellinger}) allows
precise control over such mixtures.
\begin{lemma}
  \label{lemma:tv-and-hellinger}
  Let $P_0$ and $P_1$ be distributions
  satisfying $\tvnorm{P_0 - P_1} \le \gamma$. Then
  \begin{equation*}
    \tvnorm{P_0^n - P_1^n}
    \le \sqrt{1 - (1 - \gamma)^{2n}}.
  \end{equation*}
  In particular,
  if $P_{0,n}$ and $P_{1,n}$ are sequences of distributions with
  $\tvnorm{P_{0,n} - P_{1,n}} \le a/n$, where $a < \infty$,
  then
  \begin{equation*}
    \limsup_n \tvnorm{P_{0,n}^n - P_{1,n}^n}
    \le \sqrt{1 - e^{-2a}}.
  \end{equation*}
\end{lemma}

Because $\tvnorm{Q^n - P_1^n} \le \sqrt{1 - (1 - 1/n)^{2n}} \to \sqrt{1 -
  e^{-2}}$, inequality~\eqref{eqn:minimax-to-testing} then implies
\begin{equation*}
  \minimaxlow_n(\loss, \mc{Z}, \Theta) \ge
  \frac{c}{2} \left(1 - \sqrt{1 - e^{-2}}\right).
\end{equation*}
Section~\ref{sec:proof-minimax-lower} makes each of these steps rigorous
for the compact case.
In the case where $\Theta$ is unbounded, we require a bit more care to
generate the separated losses, which we obtain not by making an arbitrarily
large gradient but instead by placing the directable set arbitrarily far
away from any given $\theta_0$.
See Section~\ref{sec:proof-unbounded-lower} for the details.

% -*- mode: latex -*- %

% -*- mode: latex -*- %

\section{Proofs of the main results}
\label{sec:main-proofs}

We prove our main theorems here.
%
%% We begin with the background on convex analysis we will require to prove
%% the results, then turn to the proofs of the theorems.

\subsection{Background on convex functions}
\label{sec:convex-analysis}

We begin with a few preliminary observations on convex functions, which will
be useful throughout the proofs of our results and other development; we
defer proofs to appendices.

\subsubsection{One-dimensional first-order behavior}

In the case that $\Theta \subset \R$, convex functions admit substantial
structure that we can
leverage~\cite[e.g.,][Ch.~1]{HiriartUrrutyLe93}, including extensions of
derivatives and subgradients to the extended real line $[-\infty, \infty]$.
Recalling the shorthand $\Rup = \R \cup \{+\infty\}$, let $f : \R
\to \Rup$ be a (proper) closed convex function.
Then we may define the left and right
derivatives~\cite[Thm.~I.4.1.1]{HiriartUrrutyLe93}
\begin{subequations}
  \label{eqn:left-right-derivatives}
  \begin{align}
    \label{eqn:right-derivative}
    \deriv_+ f(\theta) & \defeq \lim_{t \downarrow 0} \frac{f(\theta + t) - f(\theta)}{t}
    = \inf_{t > 0} \frac{f(\theta + t) - f(\theta)}{t}, \\
    \deriv_- f(\theta) & \defeq \lim_{t \uparrow 0} \frac{f(\theta - t) - f(\theta)}{-t}
    = \sup_{t > 0} \frac{f(\theta - t) - f(\theta)}{-t}.
    \label{eqn:left-derivative}
  \end{align}
\end{subequations}
%% where the equalities follow~\cite[Thm.~I.4.1.1]{HiriartUrrutyLe93}.
%
The following lemma~\cite[Ch.~I.4]{HiriartUrrutyLe93}
characterizes these quantities everywhere on $\dom f$
and relates $\deriv_{\pm}$ to the
subdifferential of $f$.
\begin{lemma}
  \label{lemma:properties-of-derivatives}
  Let $f : \R \to \Rup$ be closed convex.  Then $f$ is continuous on its
  domain $\dom f = \{\theta \in \R \mid f(\theta) < \infty\}$, and the following
  properties hold.
  \begin{enumerate}[(i)]
  \item \label{item:subdifferential-as-directional}
    If $\theta \in \interior \dom f$,
    $\partial f(\theta) = [\deriv_- f(\theta), \deriv_+ f(\theta)]$
    is a non-empty compact interval.
  \item \label{item:monotonicity-subdifferential} The subdifferential
    $\partial f$ and directional derivatives $\deriv_{\pm}$ are monotone:
    for $\theta_0 < \theta_1$,
    \begin{equation*}
      g_0 \in \partial f(\theta_0) ~ \mbox{and} ~
      g_1 \in \partial f(\theta_1)
      ~~ \mbox{implies} ~~
      g_0 \le g_1
    \end{equation*}
    and
    \begin{equation*}
      \deriv_- f(\theta_0) \le \deriv_+ f(\theta_0)
      \le \deriv_- f(\theta_1)
      \le \deriv_+ f(\theta_1).
    \end{equation*}
  \item \label{item:continuity-directionals} The directional derivatives are
    continuous in that for $\theta_0 \in \interior \dom f$,
    \begin{equation*}
      \lim_{\theta \uparrow \theta_0} \deriv_-f(\theta)
      = \lim_{\theta \uparrow \theta_0} \deriv_+ f(\theta)
      = \deriv_- f(\theta_0)
      ~~ \mbox{and} ~~
      \lim_{\theta \downarrow \theta_0} \deriv_+ f(\theta)
      = \lim_{\theta \downarrow \theta_0} \deriv_- f(\theta) = \deriv_+ f(\theta_0).
    \end{equation*}
  \item 
    If $\theta_0$ is the left endpoint of $\dom f$,
    $\deriv_+f(\theta_0) = \lim_{t \downarrow 0}
    \sup\{g \in \partial f(\theta_0 + t)\}$ and
    the defining equality~\eqref{eqn:right-derivative}
    holds in $\Rdown$.
    (Respectively, the right endpoint
    and \eqref{eqn:left-derivative} in $\Rup$).
  \item On each $[\theta_0, \theta_1] \subset \dom f$,
    $f$ is $\lipconst = \max\{-\deriv_-f(\theta_0),
    \deriv_+ f(\theta_1)\}$-Lipschitz on $[\theta_0, \theta_1]$.
  \end{enumerate}
\end{lemma}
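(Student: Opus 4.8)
The plan is to reduce the whole lemma to the single structural fact underlying first-order convexity in one dimension: the \emph{monotone slope} (three-chord) property. For convex $f$ and fixed $x$, the difference quotient $t \mapsto (f(x+t)-f(x))/t$ is non-decreasing on $\{t \ne 0 : x+t \in \dom f\}$; I would establish this first, as the elementary inequality $f(\lambda a + (1-\lambda)c) \le \lambda f(a) + (1-\lambda)f(c)$ rearranged, and then read the rest of the lemma off of it.

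From monotonicity of the difference quotient the equalities in \eqref{eqn:right-derivative}--\eqref{eqn:left-derivative} are immediate (a monotone limit equals the corresponding infimum or supremum), as is the directional-derivative chain in part~\ref{item:monotonicity-subdifferential}: inserting the chord joining $x_0$ to $x_1$ gives $\deriv_- f(x_0) \le \deriv_+ f(x_0) \le \frac{f(x_1)-f(x_0)}{x_1-x_0} \le \deriv_- f(x_1) \le \deriv_+ f(x_1)$. For continuity of $f$ on $\dom f$: at an interior point both one-sided derivatives are finite (bound slopes by chords to domain points on either side), so $f$ is locally Lipschitz, hence continuous; at an endpoint $x_0$ of the interval $\dom f$, say the left one, the three-chord bound gives $\limsup_{t\downarrow 0} f(x_0+t) \le f(x_0)$, and closedness (lower semicontinuity) gives the matching $\liminf \ge f(x_0)$. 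For part~\ref{item:subdifferential-as-directional}, with $x \in \interior \dom f$: one inclusion is the subgradient inequality evaluated at $x \pm t$ and rearranged into $\frac{f(x-t)-f(x)}{-t} \le g \le \frac{f(x+t)-f(x)}{t}$, then taking $\sup_t$ and $\inf_t$; the reverse inclusion takes $g \in [\deriv_- f(x),\deriv_+ f(x)]$ and verifies $f(y) \ge f(x)+g(y-x)$ separately for $y > x$ and $y < x$ using monotone slopes, watching the sign of $y-x$. Nonemptiness and compactness of the interval are then automatic since $\deriv_\pm f(x) \in \R$ at interior points, and the subgradient half of part~\ref{item:monotonicity-subdifferential} follows by adding the two subgradient inequalities for $(x_0,g_0)$ and $(x_1,g_1)$.

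Part~\ref{item:continuity-directionals} is where I expect the only genuine care is needed, so I would spend the bulk of the writeup there. Fix $x_0 \in \interior\dom f$ and let $x \uparrow x_0$. Monotonicity makes $x \mapsto \deriv_+ f(x)$ non-decreasing and $\le \deriv_- f(x_0)$, so $L \defeq \lim_{x\uparrow x_0}\deriv_+ f(x)$ exists with $L \le \deriv_- f(x_0)$. For the reverse, fix small $t>0$ with $x_0 - t \in \interior\dom f$; the backward chord at $x$ gives $\deriv_+ f(x) \ge \frac{f(x)-f(x_0-t)}{x-(x_0-t)}$, and letting $x\uparrow x_0$ (using continuity of $f$, already proved) and then $t\downarrow 0$ (using the just-established $\sup$-formula for $\deriv_- f(x_0)$) gives $L \ge \deriv_- f(x_0)$. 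Hence $L = \deriv_- f(x_0)$; since $\deriv_+ f(x')\le \deriv_- f(x) \le \deriv_+ f(x)$ for $x'<x<x_0$, the same limit holds for $\deriv_- f$ by squeezing. The right-hand limits are symmetric.

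For part~(iv), suppose $x_0$ is the left endpoint of $\dom f$ and take $t>0$ with $x_0+t \in \interior\dom f$; then $\sup\{g\in\partial f(x_0+t)\} = \deriv_+ f(x_0+t)$ by part~\ref{item:subdifferential-as-directional}. The two bounds $\deriv_+ f(x_0+t) \ge \frac{f(x_0+t)-f(x_0)}{t}$ (backward chord) and $\deriv_+ f(x_0+t) \le \frac{f(x_0+t+s)-f(x_0+t)}{s}$ for every fixed $s>0$ (forward chord), combined with continuity of $f$ at $x_0$ and the monotonicity of the difference quotient, sandwich $\lim_{t\downarrow 0}\deriv_+ f(x_0+t)$ between $\inf_{t>0}\frac{f(x_0+t)-f(x_0)}{t}$ and $\inf_{s>0}\frac{f(x_0+s)-f(x_0)}{s}$, which are both equal to $\deriv_+ f(x_0)$; the delicate point is to carry these steps out in $\Rdown$, allowing the common value $-\infty$ (as for $f(x)=-\sqrt{x}$ on $[0,\infty)$), which is exactly what the monotone-slope form of \eqref{eqn:right-derivative} in $\Rdown$ records. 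The right-endpoint case is the mirror image, with $\Rup$. The main obstacle throughout is bookkeeping---correctly handling the extended-real-valued one-sided derivatives at the ends of $\dom f$ and the sign flips between forward and backward difference quotients---rather than any single hard inequality.
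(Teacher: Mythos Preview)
Your proposal is correct and self-contained; the monotone-slope/three-chord approach you outline is exactly the standard argument, and your treatment of part~\ref{item:continuity-directionals} and the endpoint case~(iv) is careful in the right places (the squeeze for $\deriv_- f$ from below and the $\Rdown$-valued sandwich at the left endpoint both go through as written).

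The paper, however, does not give its own proof of this lemma. It is presented as background (``cf.~[Ch.~I.4]'' of Hiriart-Urruty and Lemar\'echal) and, despite the remark that proofs are deferred to the appendix, no proof appears there---only the later Lemmas~\ref{lemma:lipschitz-consts-monotone}, \ref{lemma:achievable-minimizers}, \ref{lemma:achievable-as-all-argmins}, and Observation~\ref{observation:gap-properties} are proved. So there is nothing to compare your argument against beyond the cited reference, whose development is built on the same monotone-slope foundation you use.
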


%% As a corollary of Lemma~\ref{lemma:properties-of-derivatives},
%% the outer semi-continuity of subdifferentials extends even
%% to the boundaries of the domains of convex functions on $\R$:
%% \begin{corollary}
%%   Let $f : \R \to \Rup$ be closed convex
%%   and assume $\theta_0 = \inf \{\theta \in \dom f\} > -\infty$.
%%   %
%%   If $\deriv_+ f(\theta_0) = -\infty$, then $g_t \in \partial f(\theta_0 + t)$ satisfy
%%   $g_t \to -\infty$ as $t \downarrow 0$.
%%   %
%%   If $\deriv_+ f(\theta_0) > -\infty$, then
%%   \begin{equation*}
%%     \partial f(\theta_0) = [-\infty, \deriv_+
%%       f(\theta_0)]
%%     ~~ \mbox{and} ~~
%%     \lim_{t \downarrow 0} \sup \{g_t \in \partial f(\theta_0 + t)\}
%%     = \deriv_+ f(\theta_0).
%%   \end{equation*}
%% \end{corollary}
%% \noindent
A convex $f$ is differentiable almost everywhere on the interior of its
domain, and if $f$ is defined on an interval, it has
an integral form~\cite[Remark I.4.2.5]{HiriartUrrutyLe93}:
\begin{lemma}[]
  \label{lemma:integrability-convex}
  If $f$ is closed convex and $[\theta_0, \theta_1] \subset \dom f$, then
  $f(\theta_1) - f(\theta_0) = \int_{\theta_0}^{\theta_1} \deriv_+ f(t) dt =
  \int_{\theta_0}^{\theta_1} \deriv_- f(t) dt$, and for almost all $\theta
  \in (\theta_0, \theta_1)$, $\deriv_+ f(\theta) = f'(\theta) = \deriv_-
  f(\theta)$.
\end{lemma}

We may reformulate Condition~\ref{cond:compact-case}
more precisely in terms of the Lipschitz constants of the
losses $\loss_z$.
Define the signed \emph{local Lipschitz
constants} at each $\theta \in \Theta$ by
\begin{equation}
  \label{eqn:signed-lip-consts}
  \lipconst_\theta^+ \defeq \sup\left\{g \in \partial \loss_z(\theta)
  \mid z \in \mc{Z}\right\}
  ~~ \mbox{and} ~~
  \lipconst_\theta^- \defeq \inf \left\{g \in \partial \loss_z(\theta)
  \mid z \in \mc{Z} \right\}
\end{equation}
The monotonicity properties of
subdifferentials more or less immediately
implies that the Lipschitz constants
$\lipconst_\theta^{\pm}$ are monotone (see
Appendix~\ref{sec:proof-lipschitz-consts-monotone}):
\begin{lemma}
  \label{lemma:lipschitz-consts-monotone}
  The function $\lipconst_\theta^-$ is non-decreasing in $\theta$,
  and $\lipconst_\theta^+$ is non-decreasing in $\theta$.
%%   and it is continuous from the right on its
%%   domain $\{\theta \mid \lipconst_\theta^+ < \infty\}$.
\end{lemma}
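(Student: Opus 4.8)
The plan is to show monotonicity of $\lipconst_\theta^- = \inf\{g \in \partial\loss(\theta,z) \mid z \in \mc{Z}\}$ (the argument for $\lipconst_\theta^+$ is symmetric, taking suprema instead of infima). Fix $\theta_0 < \theta_1$ in $\interior\Theta$; I want $\lipconst_{\theta_0}^- \le \lipconst_{\theta_1}^-$. The key tool is part~\eqref{item:monotonicity-subdifferential} of Lemma~\ref{lemma:properties-of-derivatives}: for each fixed $z$, any $g_0 \in \partial\loss(\theta_0,z)$ and $g_1 \in \partial\loss(\theta_1,z)$ satisfy $g_0 \le g_1$. In particular, since $\loss(\cdot,z)$ is closed proper convex and $\theta_0, \theta_1 \in \interior\Theta \subset \interior\dom\loss(\cdot,z)$ (using that $\loss(\theta,z) < \infty$ on $\interior\Theta$ by the standing assumptions), $\partial\loss(\theta_i,z) = [\deriv_-\loss(\theta_i,z), \deriv_+\loss(\theta_i,z)]$ is a nonempty compact interval by part~\eqref{item:subdifferential-as-directional}, so $\inf\{g \in \partial\loss(\theta_i,z)\} = \deriv_-\loss(\theta_i,z)$.

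With this identification, the claim reduces to $\inf_z \deriv_-\loss(\theta_0,z) \le \inf_z \deriv_-\loss(\theta_1,z)$. First I would note the pointwise inequality $\deriv_-\loss(\theta_0,z) \le \deriv_-\loss(\theta_1,z)$, which is exactly the directional-derivative monotonicity $\deriv_-f(x_0) \le \deriv_-f(x_1)$ for $x_0 < x_1$ from part~\eqref{item:monotonicity-subdifferential} applied to $f = \loss(\cdot,z)$. Then taking the infimum over $z \in \mc{Z}$ of both sides and using that $a(z) \le b(z)$ for all $z$ implies $\inf_z a(z) \le \inf_z b(z)$ gives $\lipconst_{\theta_0}^- = \inf_z \deriv_-\loss(\theta_0,z) \le \inf_z \deriv_-\loss(\theta_1,z) = \lipconst_{\theta_1}^-$, which is what we want.

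The argument for $\lipconst_\theta^+ = \sup\{g \in \partial\loss(\theta,z) \mid z \in \mc{Z}\}$ is the mirror image: $\sup\{g \in \partial\loss(\theta_i,z)\} = \deriv_+\loss(\theta_i,z)$, the pointwise inequality $\deriv_+\loss(\theta_0,z) \le \deriv_+\loss(\theta_1,z)$ holds by the same monotonicity, and taking suprema over $z$ preserves it. I do not expect any real obstacle here; the only care needed is the bookkeeping at points $\theta$ near $\bd\Theta$, but the lemma only asserts monotonicity and the values at boundary points do not matter (as the paper already notes for $\lipconst_{\theta_0}^\pm$ when $\theta_0 \in \bd\Theta$), so it suffices to work on $\interior\Theta$, where the subdifferential is a compact interval and the endpoint-versus-$\deriv_\pm$ identification is clean. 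If one wants monotonicity all the way to the boundary, one can simply extend by the one-sided limits, which exist by part~\eqref{item:continuity-directionals}, but this is not required by the statement.
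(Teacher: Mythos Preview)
Your proposal is correct and follows essentially the same approach as the paper: identify $\lipconst_\theta^- = \inf_{z \in \mc{Z}} \deriv_-\loss(\theta,z)$ via Lemma~\ref{lemma:properties-of-derivatives}\eqref{item:subdifferential-as-directional}, invoke the pointwise monotonicity of $\theta \mapsto \deriv_-\loss(\theta,z)$ from part~\eqref{item:monotonicity-subdifferential}, and pass to the infimum over $z$. The paper's proof spells this out via an $\epsilon$-argument and a separate case for $\lipconst_{\theta+t}^- = -\infty$, whereas you appeal directly to ``$a(z)\le b(z)$ for all $z$ implies $\inf_z a(z)\le\inf_z b(z)$,'' but the substance is the same.
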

\noindent
In view of Lemma~\ref{lemma:lipschitz-consts-monotone},
the signed Lipschitz constants~\eqref{eqn:signed-lip-consts}
define the local Lipschitz constant
\begin{equation*}
  \lipconst_\theta = \max\{\lipconst_\theta^+, -\lipconst_\theta^-\}
  = \max\{|\lipconst_\theta^+|, |\lipconst_\theta^-|\},
\end{equation*}
and Condition~\ref{cond:compact-case} holds if and only if
$\lipconst_\theta < \infty$ on $\interior \Theta$.
%% %
%% For $\theta_0 \in \bd \Theta$, the loss $\loss_z(\cdot)$ may not be
%% subdifferentiable at $\theta_0$, but it will not matter how we define
%% $\lipconst_{\theta_0}$ in that case.

\subsubsection{Multi-dimensional first-order behavior}
\label{sec:first-order-convex}

For $\Theta \subset \R^d$ with $d > 1$, convex functions admit similar
monotonicity and continuity properties of the subdifferential to the
one-dimensional case.
%
%% We record a few of these properties here that will be useful.
%
Let $f : \R^d \to \Rup$ be closed convex.
If $\theta \in \interior \dom f$, then the subdifferential $\partial
f(\theta)$ is a non-empty compact convex
set~\cite[Ch.~VI.1]{HiriartUrrutyLe93}.
Recalling the directional derivative $\deriv f(\theta; v) = \lim_{t
  \downarrow 0} \frac{f(\theta + tv) - f(\theta)}{t}$,
we define
%% In analogy with the left and right
%% derivatives~\eqref{eqn:left-right-derivatives},
%% we can define
%% the left directional derivative
\begin{equation*}
  \deriv_- f(\theta; v)
  \defeq \lim_{t \uparrow 0} \frac{f(\theta - t v) - f(\theta)}{-t}
  \stackrel{(\star)}{=} \inf_{g \in \partial f(\theta)} \<g, v\>,
\end{equation*}
where equality~$(\star)$ holds so long as $\partial f(\theta)$ is
non-empty~\cite[Ch.~VI.2.3]{HiriartUrrutyLe93}.
We can analogize Lemmas~\ref{lemma:properties-of-derivatives}
and~\ref{lemma:integrability-convex} in the multi-dimensional case as
well~\cite[cf.][Ch.~VI]{HiriartUrrutyLe93}:
\begin{lemma}
  \label{lemma:subdifferential-properties}
  Let $f : \R^d \to \Rup$ be closed convex and proper.
  Then it is continuous on the interior of its domain
  $\dom f = \{\theta \in \R^d \mid f(\theta) < \infty\}$,
  and the following hold.
  \begin{enumerate}[(i)]
  \item On each compact set $\Theta_0 \subset \interior \dom f$,
    $f$ is Lipschitz
    with constant
    \begin{equation*}
      \lipconst(\Theta_0)
      \defeq \sup_{\theta \in \Theta_0} \sup_{g \in \partial f(\theta)}
      \ltwo{g}.
    \end{equation*}
  \item The subdifferentials are monotone in that whenever
    $\partial f(\theta_0)$ and $\partial f(\theta_1)$ are non-empty,
    \begin{equation*}
      g_i \in \partial f(\theta_i)
      ~~ \mbox{implies} ~~
      \<g_1 - g_0, \theta_1 - \theta_0\> \ge 0.
    \end{equation*}
  \item \label{item:outer-semictontinuity-subdiff}
    The subdifferential mapping is outer semicontinuous:
    if $\theta_0 \in \interior \dom f$, then for
    each $\epsilon > 0$ there exists $\delta > 0$ such that
    \begin{equation*}
      \ltwo{\theta - \theta_0} < \delta
      ~~ \mbox{implies} ~~
      \partial f(\theta) \subset \partial f(\theta_0) + \epsilon \ball_2.
    \end{equation*}
  \end{enumerate}
\end{lemma}

In view of Lemma~\ref{lemma:subdifferential-properties},
we can thus generally define the local Lipschitz
constants
\begin{equation}
  \label{eqn:main-lip-const}
  \lipconst_\theta \defeq
  \sup\left\{\ltwo{g} \mid z \in \mc{Z},
  g \in \partial \loss_z(\theta) \right\},
\end{equation}
which coincide with the one-dimensional case~\eqref{eqn:signed-lip-consts}.

\subsubsection{Minimizers, conjugacy, and coercivity}
\label{sec:solution-placing-conjugates}

Let $f : \R^d \to \Rup$ be closed convex.
Recall that $f$ is coercive if $\lim_{\norm{\theta} \to \infty} f(\theta) =
+\infty$, which implies that all sublevel sets $\{\theta \mid f(\theta)
\le c\}$ are convex and compact.
For closed convex $f$, coercivity relates to
``derivatives of $f$ at infinity,'' which in turn connect
to the existence of minimizing sets.
To develop this idea, fix $\theta_0 \in \dom f$, and
recall~\cite[Def.~IV.3.2.3]{HiriartUrrutyLe93} the recession function
\begin{equation*}
  f_\infty'(v)
  = \lim_{t \to \infty} \frac{1}{t}
  (f(\theta_0 + tv) - f(\theta_0))
  = \sup_{t > 0} \frac{f(\theta_0 + t v) - f(\theta_0)}{t},
\end{equation*}
which is positively homogeneous, closed convex in $v$,
and independent of the choice $\theta_0$.
Similarly, for a convex set $C$, any $\theta_0 \in C$ defines the
\emph{recession cone}~\cite[Def.~III.2.2.2]{HiriartUrrutyLe93}
\begin{equation*}
  C_\infty \defeq \{v \mid \theta_0 + tv \in C
  ~ \mbox{for}~ t \ge 0\},
\end{equation*}
which again is independent of the choice $\theta_0$.

Coercivity and the positivity of recession functions coincide for closed
convex $f$:
\begin{lemma}
  \label{lemma:coercive-to-directional-coercive}
  Let $f : \R^d \to \Rup$ be closed convex.
  Then $f$ is coercive if and only if
  $f_\infty'(v) > 0$ for all $v \neq 0$, that is,
  it is coercive on each line.
\end{lemma}
\begin{proof}
  If $f$ is coercive, it is clear that
  $f_\infty'(v) > 0$ for all $v \in \sphere^{d-1}$.
  For the converse, assume that $f_\infty'(v) > 0$ for all
  $v \in \sphere^{d-1}$.
  If $f$ were not coercive, then for some $c < \infty$
  the sublevel set $S \defeq \{\theta \mid f(\theta) \le c\}$
  must be unbounded, and as this set is closed convex,
  it must thus have a recession
  direction $v \in S_\infty$ with $v \neq 0$
  (cf.~\cite[Proposition III.2.2.3]{HiriartUrrutyLe93}).
  But then $f(\theta_0 + t v) \le c$ for all $t$
  implies $\sup_{t > 0} f(\theta_0 + t v) \le c$ and
  so $f_\infty'(v) = 0$, a contradiction.
\end{proof}

Whenever $f\opt = \inf_\theta f(\theta) > -\infty$,
for $\epsilon \ge 0$ let
\begin{equation*}
  \sublevel_\epsilon(f) \defeq \left\{\theta \mid f(\theta)
  \le f\opt + \epsilon\right\}
\end{equation*}
denote its $\epsilon$-sub-optimality set, which is always
closed convex, and it is non-empty for $\epsilon > 0$.
Then Lemma~\ref{lemma:coercive-to-directional-coercive} immediately implies
the following equivalence result.
\begin{lemma}
  \label{lemma:coercive-iff-compact-argmin}
  Let $f : \R^d \to \Rup$ be proper closed convex.
  Then $\sublevel_0(f) = \argmin f$ is compact and non-empty
  if and only if
  $f$ is coercive if and only if
  $f_\infty'(v) > 0$ for each $v \neq 0$.
\end{lemma}

Optimality and subdifferentials connect via convex conjugates,
including in cases where $f$ may take on infinite
values~\cite[Ch.~X]{HiriartUrrutyLe93b}.
Recall the conjugate $f^*(v)
\defeq \sup_\theta \{\<v, \theta\> - f(\theta)\}$.
If $f : \R^d \to \Rup$ is closed convex, then
\begin{equation}
  \label{eqn:fenchel-duality}
  v \in \partial f(\theta) ~~ \mbox{if and only if} ~~
  \theta \in \partial f^*(v)
  ~~ \mbox{if and only if} ~~
  f(\theta) + f^*(v) = \<\theta, v\>,
\end{equation}
and so
\begin{equation*}
  \argmin_\theta \left\{f(\theta) - \<v, \theta\>\right\}
  = \partial f^*(v).
\end{equation*}
The equality~\eqref{eqn:fenchel-duality} extends the subdifferential to
arbitrary closed convex $f$, so we take it as the definition of $\partial
f$.
Recalling the support function $\sigma_C(v) \defeq \sup_{\theta \in C} \<v,
\theta\>$ of a set $C$, we also have~\cite[Thm.~13.3]{Rockafellar70} that
\begin{equation*}
  \sigma_{\dom f^*}(v) = f_\infty'(v),
\end{equation*}
and therefore $0 \in \interior \dom f^*$ if and only if
$f_\infty'(v) > 0$ for each $v \neq 0$
(cf.~\cite[Thm.~13.1]{Rockafellar70}).
Thus, when $f$ is coercive, or, equivalently,
the set of minimizers $\sublevel_0(f) \defeq \{\theta \mid f(\theta)
= f\opt\}$ is compact (Lemma~\ref{lemma:coercive-iff-compact-argmin}),
$f^*$ is finite in a neighborhood of 0, and
then $\partial f^*(v)$ is the usual subdifferential for
$v$ near 0.
The outer semi-continuity
of the sub-differential of finite convex functions
(Lemma~\ref{lemma:subdifferential-properties},
part~\eqref{item:outer-semictontinuity-subdiff}) implies that
for any $\epsilon > 0$ there therefore exists $\delta > 0$
such that
$\partial f^*(v) \subset \partial f^*(0) + \epsilon \ball$
if $\ltwo{v} \le \delta$.

As a consequence of these calculations,
whenever $f$ is coercive, for each $\epsilon > 0$ there exists
$\delta > 0$ such that
\begin{equation*}
  \bigcup_{\ltwo{v} \le \delta} \partial f^*(v)
  \subset \sublevel_0(f) + \epsilon \ball.
\end{equation*}
Inverting this inclusion, we obtain the following
proposition.
\begin{proposition}
  \label{proposition:strong-solution-placing}
  Let $f : \R^d \to \Rup$ be a closed convex function and
  assume that $\sublevel_0(f) = \argmin f$ is compact.
  Then for all $\epsilon > 0$, there exists $\delta > 0$
  such that
  \begin{equation*}
    \bigcup_{\theta \in \sublevel_0(f) + \epsilon \ball}
    \partial f(\theta) \supset \delta \ball.
  \end{equation*}
\end{proposition}

Nothing in Proposition~\ref{proposition:strong-solution-placing} assumes
that $\dom f = \R^d$, so that it allows for constraints.
Most saliently for us, let
$\convexindic{\Theta}(\theta) = 0$
for $\theta \in \Theta$ and $+\infty$ denote the convex indicator of
the set $\Theta$.
Then $\poploss_Q + \convexindic{\Theta}$ is closed convex for any finitely
supported $Q$.
In particular, Lemma~\ref{lemma:compact-minimizers-directable}
follows as an immediate consequence of
Proposition~\ref{proposition:strong-solution-placing},
where the conclusion
that $\partial (\poploss_Q + \convexindic{\Theta})(\theta)$
is non-empty is one of the results.

%% \subsection{Properties of the set of achievable minimizers}
%% \label{sec:achievable-minimizers}

\subsection{Proof of Theorem~\ref{theorem:minimax-lower}}
\label{sec:proof-minimax-lower}

We
follow the outline in Section~\ref{sec:proof-outline}
to prove Theorem~\ref{theorem:minimax-lower} here.

\subsubsection{Proof of Theorem~\ref{theorem:minimax-lower}: the
  one-dimensional case}
\label{sec:proof-bounded-lower-one-d}

%% For the one-dimensional case,
%% to simplify notation,
%% we simply assume $\Theta = [0, 1]$.

\paragraph{Constructing well-separated functions.}
Without loss of generality (by symmetry), consider the case that the lower
Lipschitz constant~\eqref{eqn:signed-lip-consts} satisfies
$\lipconst_\theta^- = -\infty$ for some $\theta \in \interior \Theta$.
Choose
$\theta_0 \in \interior \Theta$ and $\delta > 0$
to satisfy $\theta_0 < \theta_0 + \delta <
\theta$, so that $\lipconst^-_{\theta_0} = \lipconst^-_{\theta_0 + \delta} =
-\infty$ by Lemma~\ref{lemma:lipschitz-consts-monotone}.
We first ``place'' the gradients and minimizers
(recall Fig.~\ref{fig:plant-hyperplane}).
By Assumption~\ref{assumption:achievable} that minimizers are achievable,
there exists $z^+$ such that $\deriv_+ \loss_{z^+}(\theta_0) = \sup
\{\partial \loss_{z^+}(\theta_0)\} > 0$ and $\deriv_+\loss_{z^+}(\theta_0) \le
\deriv_+ \loss_{z^+}(\theta_0 + \delta) < \infty$.
We use the shorthands $a = \loss_{z^+}'(\theta_0) > 0$ and $b =
\loss_{z^+}'(\theta_0 + \delta) > 0$ for some strictly positive elements of
the subdifferentials $\partial \loss_{z^+}(\theta_0)$ and $\partial
\loss_{z^+}(\theta_0 + \delta)$, noting that we may treat them as
constants---they do not depend on the sample size $n$ or any other
distribution or sample-dependent quantities.

For any $\lipconst < \infty$, there exists $z_\lipconst$ such
that
\begin{equation*}
  \deriv_+ \loss_{z_\lipconst}(\theta_0 + \delta) \le -\lipconst.
\end{equation*}
Now, define the point distributions
\begin{equation*}
  P_0 \defeq \pointmass_{z^+}
  ~~ \mbox{and} ~~
  P_1 \defeq \left(1 - \frac{1}{n^2}\right)
  \pointmass_{z^+}
  + \frac{1}{n^2} \pointmass_{z_\lipconst}.
\end{equation*}
With these choices, observe that there is a subderivative
$\poploss_{P_0}'(\theta_0) \in \partial \poploss_{P_0}(\theta_0)$ satisfying
\begin{equation*}
  \poploss_{P_0}'(\theta_0)
  %% = p \loss_{z^+}'(\theta_0)
  %% + (1 - p) \loss_{z_\lipconst}'(\theta_0)
  %% + \epsilon^+ (\loss_{z^+}'(\theta_0) - \loss_{z_\lipconst}'(\theta_0))
  = \loss_{z^+}'(\theta_0) = a > 0.
\end{equation*}
Similarly, for $\epsilon = \frac{1}{n^2}$,
\begin{align*}
  \poploss'_{P_1}(\theta_0 + \delta)
  & = \loss_{z^+}'(\theta_0 + \delta)
  + \epsilon \left[\loss_{z_\lipconst}'(\theta_0 + \delta)
    - \loss_{z^+}'(\theta_0 + \delta)\right] \\
  & \le (1 - \epsilon) b - \epsilon \lipconst.
\end{align*}
As $\lipconst$ was arbitrary, we may choose
$\lipconst = n^3$, so that
$\poploss'_{P_1}(\theta_0 + \delta)
\le -n/2$ for large $n$.

From these calculations (and recall Fig.~\ref{fig:plant-hyperplane}), we see
that for all $t \ge 0$,
\begin{align*}
  \poploss_{P_0}(\theta_0 + t)
  - \poploss_{P_0}\opt
  \ge \poploss_{P_0}(\theta_0 + t)
  - \poploss_{P_0}(\theta_0)
  & \ge a t   ~~ \mbox{while} ~~
  \\
  \poploss_{P_1}(\theta_0 + \delta - t)
  - \poploss_{P_1}\opt
  \ge \poploss_{P_1}(\theta_0 + \delta - t)
  - \poploss_{P_1}(\theta_0 + \delta)
  & \ge \frac{n t}{2}.
\end{align*}
So at least one of the population losses
$\poploss_{P_0}$ or $\poploss_{P_1}$ must be
nontrivially larger than its infimum, and in particular,
\begin{equation}
  \label{eqn:optimization-separation}
  \dopt(\poploss_{P_0}, \poploss_{P_1})
  \ge \inf_t \max\left\{at, \frac{n}{2} (\delta - t)\right\}
  = \frac{a n \delta / 2}{a + n/2}
  = \frac{an \delta}{2 a + n}
  \ge a \delta.
\end{equation}

\paragraph{The testing guarantee.}
With the optimization separation~\eqref{eqn:optimization-separation},
we can now the desired lower bound.
By Lemma~\ref{lemma:tv-and-hellinger},
for $\epsilon = \frac{1}{n^2}$,
$\tvnorm{P_0 - P_1} \le \epsilon$ and so
\begin{equation*}
  \tvnorm{P_0^n - P_1^n}
  \le \sqrt{1 - (1 - n^{-2})^n}
  = \frac{(1 + o(1))}{\sqrt{n}}.
\end{equation*}
That $\minimaxlow_n$ is non-increasing in $n$ thus gives minimax lower bound
\begin{equation*}
  \minimaxlow_n(\loss, \mc{Z}, \Theta)
  \ge \frac{\dopt(\poploss_{P_0}, \poploss_{P_1})}{2}
  \left(1 - \tvnorm{P_0^n - P_1^n}\right)
  \ge \frac{a \delta}{2},
\end{equation*}
valid for all $n$.
Of course, as we note in the beginning of the proof, $\delta > 0$ is fixed
and independent of $n$, as is $a = \loss_{z^+}'(\theta_0) > 0$.

\subsubsection{Proof of Theorem~\ref{theorem:minimax-lower}: the
  general case}
\label{sec:proof-compact-lower-general}

We follow the outline of the proof of the one-dimensional case and
that present in
Section~\ref{sec:proof-outline}.
We find two well-separated finitely
supported losses by ``placing'' gradients in directions
that most separate the functions according to
the optimization distance~\eqref{eqn:opt-distance}.
When Condition~\ref{cond:compact-case} fails, the
definition~\eqref{eqn:main-lip-const} of the Lipschitz constant
$\lipconst_\theta$ shows that there is some compact set $\Theta_0 \subset
\interior \Theta$ for which $\sup_{\theta \in \Theta_0} \lipconst_\theta =
+\infty$.
Taking a subsequence if necessary, we may find a convergent
sequence $\theta_n \to \theta_0 \in \Theta_0$ satisfying
$\lim_n \lipconst_{\theta_n} = +\infty$.
In particular, we have a point $\theta_0$
where
for each $\epsilon > 0$,
\begin{equation*}
  \sup_{z \in \mc{Z}}
  \sup_{\ltwo{\theta - \theta_0} < \epsilon}
  \sup_{g \in \partial \loss_z(\theta)} \ltwo{g}
  = +\infty.
\end{equation*}
Then for any $\epsilon > 0$ and $m < \infty$, we may find
$z^*_m$ and $\theta_m$ with $\ltwo{\theta_m -
  \theta_0} < \epsilon$ satisfying
\begin{equation*}
  \sup_{g \in \partial \loss_{z^*_m}(\theta_m)} \ltwo{g} = m
\end{equation*}
(if $\ltwo{g} > m$, we may simply increase $m \uparrow \ltwo{g}$, so
assuming equality is no loss of generality).
Because $\partial \loss_{z^*_m}(\theta_m)$ is compact, the supremum is
attained at some vector $g_m \in \partial \loss_{z^*_m}(\theta_m)$, and
without loss of generality, again moving to a subsequence if necessary, we
assume that $v_m \defeq g_m / \ltwo{g_m} \to v \in \sphere^{d-1}$.

We use this direction and the directability assumptions in the
definition~\eqref{eqn:achievable-minimizers} of the achievable
set to construct losses whose gradients point in opposite directions,
exactly as in the one-dimensional case
and in the ``gradient placing'' construction
in Figure~\ref{fig:plant-hyperplane} and
Sec.~\ref{sec:proof-outline}.
The following technical lemma, whose proof we defer to
Section~\ref{sec:proof-i-can-find-directable}, provides the key.
\begin{lemma}
  \label{lemma:i-can-find-directable}
  Let Assumption~\ref{assumption:achievable}
  hold and $\theta_0 \in \interior \Theta$.
  Then there exist $\alpha_0 > 0$ and $\beta_0 > 0$ such that for all
  $v \in \sphere^{d-1}$,
  there exists a convex compact set $K \subset \Theta$ such that
  $\<v, \theta_0\> + \alpha_0
  \le \inf_{\theta \in K} \<v, \theta\>$,
  and for each $u \in \sphere^{d-1}$, there exists
  a finitely supported distribution $Q$ such that
  \begin{equation*}
%%     \theta = \theta_0 + \alpha v + v_\perp
%%     \in \Theta
%%     ~~ \mbox{such that} ~~
    \beta u \in \partial \poploss_Q(\theta)
    ~~ \mbox{for~some}~ \theta \in K
    ~ \mbox{and} ~ \beta \ge \beta_0.
  \end{equation*}
  In particular, the point
  $\theta$ may be taken of the form
  $\theta = \theta_0 + \alpha v + v_\perp$,
  where $\<v_\perp, v\> = 0$, $\ltwo{v_\perp} \le \frac{1}{\alpha_0}$,
  $\alpha_0 \le \alpha \le \frac{1}{\alpha_0}$,
  and $\beta \ge \beta_0$.
\end{lemma}

Lemma~\ref{lemma:i-can-find-directable}
guarantees that we can find a finitely supported
distribution $Q$ with
\begin{equation*}
  \theta_1 = \theta_0 + \alpha v + v_\perp,
  ~~~ \mbox{and} ~~~
  -\beta v_m \in \partial \poploss_Q(\theta_1),
\end{equation*}
where $v_\perp$ is finite, $\<v_\perp, v\> = 0$,
and $\alpha \ge \alpha_0 > 0$, $\beta \ge \beta_0
> 0$.
Therefore
\begin{equation*}
  \poploss_Q(\theta) \ge \poploss_Q(\theta_1) -
  \beta \<v_m, \theta - \theta_1\>.
\end{equation*}
Assume that $m$ is large enough that
$\ltwo{v_m - v} \le \epsilon$; consider the point $\wb{\theta} = \half
(\theta_m + \theta_1)$ halfway between $\theta_1$ and $\theta_m$.
If $\<\theta, v_m\> \le \<\wb{\theta}, v_m\>$, we have
\begin{align*}
  \poploss_Q(\theta)
  \ge \poploss_Q(\theta_1) - \beta \<v_m, \theta - \theta_1\>
  & \ge \poploss_Q(\theta_1) - \beta \<v_m, \wb{\theta} - \theta_1\> \\
  & = \poploss_Q(\theta_1)
  - \frac{\beta}{2} \<v_m, \theta_m - \theta_1\> \\
  %% & = \poploss_Q(\theta_1)
  %% - \frac{\beta}{2} \<v_m, \theta_0 - \theta_1\>
  %% - \frac{\beta}{2} \<v_m, \theta_m - \theta_0\> \\
  & = \poploss_Q(\theta_1)
  - \frac{\beta}{2} \<v, \theta_0 - \theta_1\>
  - \frac{\beta}{2} \<v_m - v, \theta_0 - \theta_1\>
  - \frac{\beta}{2} \<v_m, \theta_m - \theta_0\> \\
  %% = \poploss_Q(\theta_1)
  %% + \frac{\delta \alpha}{2} \ltwo{v_0}^2
  & = \poploss_Q(\theta_1) + \frac{\beta \alpha}{2}
  + \frac{\beta}{2} \<v_m - v, \alpha v + v_\perp\>
  - \frac{\beta}{2} \<v_m, \theta_m - \theta_0\> \\
  & > \poploss_Q(\theta_1) + \frac{\beta \alpha}{2}
  - \frac{\beta (\alpha \epsilon + \epsilon \ltwo{v_\perp})}{2}
  - \frac{\beta \epsilon}{2} \\
  & = \poploss_Q(\theta_1) + \frac{\beta (\alpha
    - \alpha \epsilon - \epsilon(1 + \ltwo{v_\perp}))}{2}
\end{align*}
by our choice $\theta_1 = \theta_0 + \alpha v + v_\perp$
and because $\ltwo{v} = 1$
and $\ltwo{\theta_m - \theta_0} < \epsilon$.
On the other hand, if $\<\theta, v_m\> \ge \<\wb{\theta}, v_m\>$,
then we use $\ltwo{g_m} = m$ to obtain
\begin{align*}
  \loss_{z^*_m}(\theta)
  \ge \loss_{z^*_m}(\theta_m) + m \<g_m / m,
  \theta - \theta_m\>
  & = \loss_{z^*_m}(\theta_m)
  + m \<v_m, \theta - \theta_m\> \\
  & \ge \loss_{z^*_m}(\theta_m)
  + m \<v_m, \wb{\theta} - \theta_m\> \\
  & = \loss_{z^*_m}(\theta_m)
  + \frac{m}{2}
  \<v_m, \theta_1 - \theta_m\> \\
  & = \loss_{z^*_m}(\theta_m)
  + \frac{m}{2}
  \left(\<v_m, \alpha v + v_\perp\> + \<v_m, \theta_0 - \theta_m\>\right) \\
  & \ge \loss_{z^*_m}(\theta_m)
  + \frac{m}{2}
  \left(\alpha(1 - \epsilon) - \epsilon (1 + \ltwo{v_\perp})\right).
\end{align*}

With these choices, assume $\epsilon > 0$ is small enough that
$\alpha(1 - \epsilon) - \epsilon(1 + \ltwo{v_\perp}) > \frac{\alpha}{2}$ and
define the two probability distributions
\begin{equation*}
  P_0 \defeq \left(1 - \frac{1}{n}\right) Q + \frac{1}{n} \pointmass_{z^*_m}
  ~~ \mbox{and} ~~
  P_1 \defeq Q.
\end{equation*}
Then
\begin{equation*}
  \poploss_{P_1}(\theta)
  \ge \poploss_{P_1}\opt + \frac{\beta \alpha}{4}
  ~~ \mbox{if} ~~
  \<v_m, \theta\> \le \<v_m, \wb{\theta}\>.
\end{equation*}
Consider the converse case that $\<v_m, \theta\> \ge \<v_m, \wb{\theta}\>$.
Because $Q$ is finitely supported and the losses are proper,
$\poploss_Q\opt > -\infty$.
So if $\<v_m, \theta\> \ge \<v_m, \wb{\theta}\>$, then
\begin{align*}
  \poploss_{P_0}(\theta)
  = \left(1 - \frac{1}{n}\right)
  \poploss_Q(\theta)
  + \frac{1}{n} \loss_{z^*_m}(\theta)
  & \ge \left(1 - \frac{1}{n}\right)
  \poploss_Q\opt
  + \frac{1}{n} \loss_{z^*_m}(\theta_m)
  + \frac{m \alpha}{4 n} \\
  & \ge \poploss_{P_0}(\theta_m)
  + \frac{m \alpha}{4n}
  + \left(1 - \frac{1}{n}\right)
  \left(\poploss_Q\opt - \poploss_Q(\theta_m)\right).
\end{align*}
Because $Q$ has finite support, it is Lipschitz in any compact neighborhood
of $\theta_0$, and so $\poploss_Q\opt - \poploss_Q(\theta_m)$
is uniformly bounded for all $m$.
The choice of $m < \infty$ was otherwise arbitrary,
so we may take it large enough that
\begin{equation*}
  \poploss_{P_0}(\theta) \ge \poploss_{P_0}\opt + 1
  ~~ \mbox{if}~~
  \<v_m, \theta\> \ge \<v_m, \wb{\theta}\>.
\end{equation*}
Combining the two calculations, we see that for
any $\theta$,
\begin{align*}
  \poploss_{P_0}(\theta)
  + \poploss_{P_1}(\theta)
  - \poploss_{P_0}\opt - \poploss_{P_1}\opt
  \ge \min\left\{\frac{\beta \alpha}{2}, 1 \right\}.
\end{align*}
That is, for some (problem-dependent)
constant $c > 0$, independent of the sample size $n$,
we have constructed $P_0$ and $P_1$ so that
\begin{align*}
  \dopt(\poploss_{P_0}, \poploss_{P_1}) \ge c,
\end{align*}
while Lemma~\ref{lemma:tv-and-hellinger} yields
\begin{align*}
  \tvnorm{P_0^n - P_1^n}
  % \le \dhel(P_0^n, P_1^n) \sqrt{2 - \dhel^2(P_0^n, P_1^n)}
  \le (1 + o_n(1)) \sqrt{1 - e^{-1}} \sqrt{1 + e^{-1}}
  \to \sqrt{1 - e^{-2}}.
\end{align*}
The reduction from optimization to testing~\eqref{eqn:minimax-to-testing}
then gives the theorem.

\subsubsection{Proof of Lemma~\ref{lemma:i-can-find-directable}}
\label{sec:proof-i-can-find-directable}

Let $\theta_0 \in \interior \Theta$,
$v \in \sphere^{d-1}$, and $t = \<v, \theta_0\>$.
Then $H_{v,t} \cap \Theta$ and $H_{v,t}^c \cap \Theta$ are
each convex sets with interiors.

Assume for the sake of contradiction that
%%   there is no
%%   $\alpha_0 > 0$ for which
%%   $H_{v, t + \alpha_0}^c \cap \Theta$ has an interior and
%%   there exists a finitely supported $Q$ such that
%%   \begin{equation*}
%%     \theta\opt(Q) \subset H_{v,t + \alpha_0}^c
%%     ~~ \mbox{i.e.} ~~
%%     \<\theta, v\> > t + \alpha_0
%%     ~ \mbox{for~} \theta \in \theta\opt(Q).
%%   \end{equation*}
%%   Then for all finitely supported $Q$,
%%   \begin{equation*}
%%     \theta\opt(Q) \cap 
%%   \end{equation*}
for all $\alpha_0 > 0$ with $H_{v, t + \alpha_0}^c \cap \interior \Theta
\neq \emptyset$ and all directable sets $C$,
there is $\theta \in C$ with
$\<\theta, v\> \le t + \alpha_0$.
Then
\begin{equation*}
  H_{v, t + \alpha_0} \cap C \neq \emptyset
\end{equation*}
for all directable sets $C$,
and so
$(v, t + \alpha_0) \in \mc{U}$, that is, the
pair $(v, t + \alpha_0)$ is unconstraining.
Because $H_{v, t + \alpha_0}^c \cap \interior \Theta \neq \emptyset$,
we have $H_{v,t + \alpha_0} \cap \Theta \subsetneq \Theta$,
contradicting Assumption~\ref{assumption:achievable}.
Thus there exists some $\alpha_0 > 0$ and
directable set $C$ such that
$C \subset H_{v,t + \alpha_0}^c$.

By the Definition~\ref{definition:directable} of directability, for all
$\epsilon > 0$ there exists a compact convex $C_\epsilon$ between $C$ and
$\Theta$, i.e., $C \subset C_\epsilon \subset \Theta$, with $\dist(\theta,
C) \le \epsilon$ for all $\theta \in C_\epsilon$, and a finite collection
$\{z_i\}_{i = 1}^k$ and $\beta_0 > 0$ such that
\begin{equation*}
  \bigcup_{Q \in \mc{P}(\{z_i\}_{i=1}^k)}
  \bigcup_{\theta \in C_\epsilon} \partial
  \left(\poploss_Q + \convexindic{\Theta}\right)(\theta)
  \supset \beta_0 \ball.
\end{equation*}
Because $\alpha_0 > 0$, we 
can certainly take $\epsilon$ small enough
that $C + \epsilon \ball
\subset H_{v, t + \alpha_0/2}^c$,
and taking $K = C_\epsilon$ proves the first statement of the lemma.
Every element of
$\Theta \cap H_{v,t + \alpha_0/2}^c$ has
the form $\theta_0 + \alpha v + v_\perp$ for some
$\alpha \ge \alpha_0 / 2$ and $\<v_\perp, v\> = 0$.
That we may take $\ltwo{v_\perp}$ bounded and assume $\alpha \le
\frac{1}{\alpha_0}$ follows because $C_\epsilon$ is compact, so that
decreasing $\alpha_0$ does not change the conclusions of the lemma.

\subsection{Proof of Theorem~\ref{theorem:minimizable}}
\label{sec:proof-minimizable}

If $\lipconst \defeq \sup_{\theta \in \Theta} \lipconst_\theta < \infty$,
the result is trivial: the output $\what{\theta}_n$ the average of $n$ steps
of the stochastic subgradient method achieves risk
\begin{equation}
  \label{eqn:sgd-convergence}
  \E[\poploss_P(\what{\theta}_n) - \poploss_P\opt] \le O(1) \frac{\lipconst
    \diam(\Theta)}{\sqrt{n}}
\end{equation}
with an appropriate stepsize~\cite{NemirovskiJuLaSh09}.
We now handle the case that
the Lipschitz constants~\eqref{eqn:main-lip-const}
can explode on the boundaries, so that $\sup_{\theta \in \Theta}
\lipconst_\theta = \infty$, while Condition~\ref{cond:compact-case} holds.

%% Because $\Theta$ is closed convex, we can write it as the intersection of
%% all half spaces containing it~\cite[Ch.~III]{HiriartUrrutyLe93}.
%% %
%% We assume without loss that each half-space has the normalized representation
%% $H = \{\theta \mid \<v, \theta\> \le b\}$, where $v \in \sphere^{d-1}$ and
%% $b \in \R$.
%% %
%% Then we define
%% \begin{equation*}
%%   H_\epsilon \defeq \{\theta \mid \<v, \theta\> \le b - \epsilon \},
%% \end{equation*}
%% which is the set of points whose distance is at least $\epsilon$ from
%% the half-space $H^c$.
%% %
%% (If $\theta_0 \not \in H$,
%% because $\<\theta_0, v\> > b$, one projects
%% $\theta_0$ onto $H$ by finding the smallest $t \ge 0$ such that
%% $\<\theta_0 - tv, v\> \le b$, i.e., $t = b - \<v, \theta_0\>$,
%% so $\dist(\theta_0, H) = \hinge{b - \<v, \theta_0\>}$.)

%% Let $\mc{H}$ be a collection of halfspaces such that
%% $\Theta = \cap\{H \in \mc{H}\}$, and define
%% \begin{equation*}
%%   \Theta_\epsilon \defeq \bigcap_{H \in \mc{H}} H_\epsilon.
%% \end{equation*}
%% If $\theta \in \interior \Theta$, then there exists $\epsilon > 0$
%% such that $\theta \in \Theta_\epsilon$:
%% take $\epsilon$ such that
%% $\theta + \epsilon \ball \subset \Theta$, in which case
%% $\dist(\theta, H^c) \ge \epsilon$ for any halfspace $H \supset \Theta$.
%% %
%% In particular, $\interior \Theta = \cup_{\epsilon > 0} \Theta_\epsilon$.
%

We first argue that no loss can decrease too much on a compact set.
\begin{lemma}
  \label{lemma:do-not-shrink}
  Let $\loss : \interior \Theta \to \R$ be convex, $\theta_0 \in \interior
  \Theta$, and $\lipconst_0 \defeq \sup_{g \in \partial
    \loss(\theta_0)} \ltwo{g} < \infty$.
  Then
  \begin{equation*}
    \loss(\theta) \ge \loss(\theta_0) - \lipconst_0 \ltwo{\theta - \theta_0}.
  \end{equation*}
\end{lemma}
\begin{proof}
  The one-dimensional convex function $h(t) \defeq \loss(t \theta + (1
  - t) \theta_0)$ satisfies $h(1) = h(0) + \int_0^1 h'(t) dt$, because
  it is a.e.\ differentiable, and $h'(t) = \deriv \loss(\theta_0 + t
  (\theta - \theta_0); \theta - \theta_0)$ at all points where the
  derivative exists.
  Because the directional derivative is monotone,
  so that $\deriv\loss (\theta + t v; v) \ge \deriv \loss(\theta; v)$ for all
  $t \ge 0$, and
  $|\deriv \loss(\theta_0; v)|
  \le \lipconst_0 \ltwo{v}$,
  we therefore have
  \begin{align*}
    \loss(\theta) & = \loss(\theta_0)
    + \int_0^1 \deriv \loss(\theta_0 + t (\theta - \theta_0); \theta - \theta_0)
    dt \\
    & \ge \loss(\theta_0) + \int_0^1 \deriv \loss(\theta_0; \theta - \theta_0)
    dt
    \ge \loss(\theta_0) - \int_0^1 \lipconst_0 \ltwo{\theta - \theta_0}
    dt = \loss(\theta_0) - \lipconst_0 \ltwo{\theta - \theta_0}
  \end{align*}
  as desired.
\end{proof}

\newcommand{\thetacent}{\theta_0}
\newcommand{\cent}{_{\theta_0}}

Let $\thetacent \in \interior \Theta$, and let $\lipconst\cent =
\sup_{z \in \mc{Z}} \ltwo{\partial \loss_z(\thetacent)}$ be the
Lipschitz constant~\eqref{eqn:main-lip-const}.
Fix $\theta \in \Theta$, let $v = \theta - \thetacent$, and define $\theta_t
= \thetacent + t v$ for $t \in [0, 1]$.
Then Lemma~\ref{lemma:do-not-shrink} shows that
\begin{equation*}
  \loss_z(\theta)
  \ge \loss_z(\theta_t) - \lipconst\cent
  \cdot (1 - t) \ltwo{\theta - \thetacent}.
  %% \loss_z(\theta) = \loss_z(\theta_t) + \int_t^1
  %% \deriv \loss_z(\thetacent + u v; v) du
  %% \ge \loss_z(\theta_t) + \int_t^1 \deriv \loss_z(\thetacent; v) du
  %% = \loss_z(\theta_t) + (1 - t) \deriv \loss_z(\thetacent; v).
\end{equation*}
Defining the ``star'' interior of $\Theta$ for $\delta \in [0, 1]$ by
\begin{equation*}
  \Theta_{\delta}
  \defeq \left\{\thetacent + t (\theta - \thetacent)
  \mid 0 \le t \le 1 - \delta,
  \theta \in \Theta \right\}
  = \conv\left\{\theta_0, (1 - \delta) \Theta \right\}
\end{equation*}
we see immediately that $\interior \Theta \supset \Theta_\delta$
and $\Theta_\delta$ is compact convex.
Then for any $\delta \in [0, 1]$ and $\theta \in \Theta$, we obtain
\begin{equation*}
  \loss_z(\theta) \ge \inf_{\theta' \in \Theta_\delta}
  \loss_z(\theta')
  - \delta \lipconst\cent \diam(\Theta).
\end{equation*}

Without providing any concrete analytical bounds (which depend on the
structure of $\Theta$), it is apparent that
for any distribution $P$ on $\mc{Z}$,
\begin{equation*}
  \inf_{\theta \in \Theta_\delta}
  \poploss_P(\theta)
  \le \inf_{\theta \in \Theta} \poploss_P(\theta)
  + \delta \lipconst\cent \diam(\Theta).
\end{equation*}
In particular, letting
$\lipconst(\Theta_\delta) = \sup_{\theta \in \Theta_\delta}
\lipconst_\theta$ and performing the stochastic gradient method for $n$ steps
on the set $\Theta_\delta$ gives
\begin{equation*}
  \E\left[\poploss_P(\what{\theta}_n) - \poploss_P\opt\right]
  \le \delta \lipconst\cent \diam(\Theta)
  + \frac{\lipconst(\Theta_\delta) \diam(\Theta)}{\sqrt{n}},
\end{equation*}
via the bound~\eqref{eqn:sgd-convergence}.
Take $\delta = \delta_n \to 0$ slowly enough that the right side tends to
zero.

\subsection{Proof of Proposition~\ref{proposition:no-rate}}
\label{sec:proof-no-rate}

Without loss of generality we assume $r(1)=1$, as
otherwise we apply the same argument to the normalized rate function
$\bar{r}(x)=r(x)/r(1)$.
The rate function has a continuous increasing inverse $r^{-1} :
\openright{1}{\infty} \to \openright{1}{\infty}$, and we define the
losses $\loss_z : [0, 1] \to \Rup$ for $z \in \{0, 1\}$ by
\begin{equation*}
  \ell_z(\theta) \defeq \theta + z \int_{\theta}^{1} r^{-1}(1/t) dt,
\end{equation*}
where we define $\ell_1(0)=+\infty$.
The convexity of $\loss_z$ follows immediately, as
\begin{align*}
  \ell_z'(\theta) = 1 - z r^{-1}(1/\theta)
  ~~ \mbox{and} ~~
  \ell_z''(\theta) =\frac{z}{\theta^2}\left(r^{-1}\right)'(1/\theta)\geq 0
\end{align*}
(where we tacitly use that $(r^{-1})'$ exists almost everywhere).
For $\delta\in (0,1/2)$ let us now define distributions
\begin{equation*}
  P_0 \defeq \pointmass_{0}
  ~~ \mbox{and} ~~
  P_\delta \defeq \left(1 -\delta\right)
  \pointmass_{0}
  + \delta \pointmass_{1}.
\end{equation*}
Under $P_0$ we have $L_{P_0}(\theta) = \theta$, which satisfies $L_{P_0}(0)
= \inf_{\theta \in [0, 1]} \poploss_{P_0}(\theta) = 0$, while under
$P_\delta$
\begin{align*}
  L_{P_\delta}(\theta)&=\theta+\delta \int_{\theta}^{1}r^{-1}(1/x)dx,
\end{align*}
has minimizer $\theta_\delta=\frac{1}{r(1/\delta)}$.

We lower bound the optimization distance via
\begin{align*}
  \dopt(L_{P_0},L_{P_\delta})\geq \inf_{\theta \in [0,1]}
  \half \left(L_{P_0}(\theta)+L_{P_\delta}(\theta)- 0 -
  L_{P_\delta}(\theta_\delta)\right).
\end{align*}
Evidently $\theta_{\delta/2}$ minimizes
$L_{P_0}(\theta) + L_{P_\delta}(\theta)$, so that
\begin{align*}
  \dopt(L_{P_0},L_{P_\delta})&\geq \half\left(\frac{2}{r(2/\delta)}-\frac{1}{r(1/\delta)}+\delta\int_{1/r(2/\delta)}^{1/r(1/\delta)}r^{-1}(1/x)dx\right)\\
  &\geq \half\left(\frac{2}{r(2/\delta)}-\frac{1}{r(1/\delta)}+\delta\left(\frac{1}{r(1/\delta)}-\frac{1}{r(2/\delta)}\right)r^{-1}(r(1/\delta))\right)\\
  &= \frac{1}{2r(2/\delta)},
\end{align*}
where we have again used that $r^{-1}$ is increasing.
Choosing $\delta=\frac{1}{n}$ and applying Lemma~\ref{lemma:tv-and-hellinger},
we obtain
\begin{equation*}
	\minimaxlow_n(\loss, \mc{Z}, \Theta) \ge
	\frac{1}{4r(2n)} \left(1 - \sqrt{1 - e^{-2}}\right).
\end{equation*}
Set the numerical constant $c = \frac{1 - \sqrt{1 - e^{-2}}}{4}$.

%% \subsubsection{A minor extension}

%% An inspection of the proof reveals that we could take the
%% rate $r(\epsilon) \to 0$ so long as $\lipconst^2_{r(\epsilon)}
%% \le 2 \epsilon^{-1/k}$ for some $k < \infty$. Then
%% for large enough $N$, if $n \ge N$ we would obtain
%% \begin{align*}
%%   \P(\poploss_P(\what{\theta}) \ge \poploss_P\opt + c)
%%   \le
%%   O(1) \exp\left(-\frac{n c^2}{2 \lipconst^2_{r(1/n)}}\right)
%%   \le O(1) \exp\left(-c^2 n^{1 - 1/k}\right).
%% \end{align*}
%% Computing the integral again, for large enough $N$, we
%% obtain $\E[\poploss_P(\what{\theta}) - \poploss\opt_P]
%% \le O(1) / \sqrt{n^{1 - 1/k}}$.

% -*- mode: latex -*- %

\subsection{Proof of Theorem~\ref{theorem:unbounded-case}:
  the one-dimensional case}
\label{sec:proof-unbounded-lower}

We begin with a few preliminary results before
moving to the proof proper of Theorem~\ref{theorem:unbounded-case}
when $\Theta \subset \R$.
Throughout, we assume Condition~\ref{cond:compact-case} holds,
because otherwise, Theorem~\ref{theorem:minimax-lower} gives the result.
First, we show the equivalence between
the conditions~\eqref{eqn:cannot-be-far-in-compact}
and~\eqref{eqn:cannot-be-far-in-interval}.
\begin{lemma}
  \label{lemma:conditions-same-one-dim}
  Assume that $\inf \Theta = -\infty$.
  Then the following two statements are equivalent:
  \begin{enumerate}[(i)]
  \item \label{item:distribution-to-pointwise-minimization}
    For all $\epsilon > 0$, there exists
    $t > -\infty$ such that
    $\inf_{\theta \ge t} [\poploss_Q(\theta) - \poploss_Q\opt]
    \le \epsilon$ for all $Q \in \Pdiscrete(\mc{Z})$.
  \item \label{item:compactness-minimization}
    For all $\epsilon > 0$, there exists
    $t > -\infty$ such that
    $\inf_{\theta \ge t}
    \left[\loss_z(\theta) - \loss_z\opt\right] \le \epsilon$
    for all $z \in \mc{Z}$.
    %%
    %% $\inf_{\theta \ge t} \loss_z(\theta)
    %% \le \inf_{\theta \in \Theta} \loss_z(\theta) + \epsilon$
    %% for all $z \in \mc{Z}$.
  %% \item \label{item:gap-small}
  %%   $\lim_{\theta \downarrow -\infty} \leftgapfunc(\theta) = 0$.
  \end{enumerate}
\end{lemma}
\begin{proof}
  Clearly \eqref{item:distribution-to-pointwise-minimization} implies
  \eqref{item:compactness-minimization}.
  For the converse, let $t_0 > -\infty$
  satisfy if $\inf_{\theta \ge t_0} \loss_z(\theta) \le \loss_z\opt
  + \epsilon$, and consider any distribution $P$ for which $\poploss_P$ is
  well-defined~\eqref{eqn:well-defined}.
  Let $\theta\opt(P) \in \argmin_\theta \poploss_P(\theta)$, where we
  tacitly let $\theta\opt(P)$ be in the extended reals $[-\infty, \infty]$.
  If $\theta\opt = \theta\opt(P) < t_0$, then
  \begin{equation*}
    \E_P[\loss_Z(t_0)]
    = \poploss_P(\theta\opt) +
    \E_P\left[\loss_Z(t_0) - \loss_Z(\theta\opt)\right]  
  \end{equation*}
  so by assumption~\eqref{eqn:cannot-be-far-in-interval},
  the claim~\eqref{item:distribution-to-pointwise-minimization}
  holds:
  \begin{equation*}
    \loss_z(t_0) - \loss_z(\theta\opt)
    \le 
    \begin{cases}
      \loss_z(t_0) - \loss_z\opt \le \epsilon & \mbox{if}~
      \deriv_+\loss_z(t_0) \ge 0 \\
      0 & \mbox{if}~ \deriv_+\loss_z(t_0) < 0.
    \end{cases}
  \end{equation*}
  This gives $\E_P[\loss_Z(t_0)]
  \le \poploss_P(\theta\opt) + \epsilon$ as desired.
\end{proof}

%% \conditionbox{\label{cond:unbounded-case}
%%   In addition to the conditions~\ref{cond:compact-case},
%%   if $\Theta$ is unbounded to the left, then
%%   \begin{equation*}
%%     \lim_{\theta \to -\infty} \leftgapfunc(\theta) = 0.
%%   \end{equation*}
%%   Similarly,
%%   if $\Theta$ is unbounded to the right, then
%%   $\lim_{\theta \to \infty} \rightgapfunc(\theta) = 0$.
%% }

\subsubsection{Eliminating infinite losses}

We begin the proof of Theorem~\ref{theorem:unbounded-case} (when $d = 1$)
by
eliminating issues that arise if losses can take on infinite (or
asymptotically infinite) values, showing that $\minimaxlow_n =
+\infty$ in these cases.

\begin{lemma}
  \label{lemma:no-infinite-one-d}
  If there exists $z \in \mc{Z}$ such that
  $\loss_z\opt(\Theta) = -\infty$, then
  $\minimaxlow_n(\loss, \mc{Z}, \Theta) = +\infty$.
\end{lemma}
\begin{proof}
  Let $z$ be such that $\loss_z\opt(\Theta) = -\infty$,
  and take any $\theta_0 \in \interior \Theta$ at which
  $\loss_z$ is differentiable (this occurs for a.e.\ $\theta_0$).
  Because $\loss_z$ is proper, w.l.o.g., we may assume $\lim_{t
    \downarrow -\infty} \loss_z(t) = -\infty$, and so
  $\loss_z'(\theta_0) > 0$ for all $\theta_0$.
  Now, let $\theta_1 > \theta_0$, and find $z_1$ such that
  $\deriv_+\loss_{z_1}(\theta_1) < 0$, which exists
  by Assumption~\ref{assumption:achievable};
  let $\beta$ solve
  $-\beta \loss_z'(\theta_0) \in \partial \loss_{z_1}(\theta_1)$,
  so that $\beta > 0$.
  Then
  \begin{align*}
    \loss_{z_1}(\theta) + \beta \loss_{z}(\theta)
    & \ge \loss_{z_1}(\theta_1)
    - \beta\loss_z'(\theta_0) (\theta - \theta_1)
    + \beta \loss_z(\theta_0) + \beta \loss_z'(\theta_0)(\theta - \theta_0)\\
    & = \loss_{z_1}(\theta_1)
    + \beta \loss_z(\theta_0)
    + \beta \loss_z'(\theta_0)(\theta_1 - \theta_0)
    > \loss_{z_1}(\theta_1) + \beta \loss_z(\theta_0).
  \end{align*}
  Thus
  \begin{equation*}
    \inf_\theta \left[\loss_{z_1}(\theta) + \beta \loss_z(\theta)\right]
    > \loss_{z_1}(\theta_1) + \beta \loss_z(\theta_0) > -\infty,
  \end{equation*}
  and so defining the probability distributions
  \begin{equation*}
    P_0 \defeq \pointmass_z
    ~~ \mbox{and} ~~
    P_1 \defeq \frac{1}{1 + \beta} \pointmass_{z_1}
    + \frac{\beta}{1 + \beta} \pointmass_z
  \end{equation*}
  we obtain
  \begin{equation*}
    \dopt(\poploss_{P_0}, \poploss_{P_1}) = +\infty.
  \end{equation*}
  Using the shorthand $\epsilon = 1 - \frac{1}{1 + \beta}
  = \frac{\beta}{1 + \beta}$,
  we have $\tvnorm{P_0 - P_1}
  \le 1 - \epsilon$, and
  Lemma~\ref{lemma:tv-and-hellinger}
  implies $\tvnorm{P_0^n - P_1^n}
  \le \sqrt{1 - \epsilon^{2n}} < 1$.
  Inequality~\eqref{eqn:minimax-to-testing} shows
  that $\minimaxlow_n(\loss, \mc{Z}, \Theta) = +\infty$.
\end{proof}

\begin{lemma}
  \label{lemma:interior-theta-bounded-gaps-1d}
  Let Condition~\ref{cond:compact-case} hold.
  If for some $\theta_0 \in \interior \Theta$ we have
  \begin{equation*}
    \sup_{z \in \mc{Z}}
    \left[\loss_z(\theta_0) - \loss_z\opt(\Theta)\right] = \infty,
  \end{equation*}
  then
  $\minimaxlow_n(\loss, \mc{Z}, \Theta) = +\infty$.
\end{lemma}
\noindent
The proof of this result in the general case is no more
complex than in the one-dimensional case, so we
simply refer to Lemma~\ref{lemma:interior-theta-bounded-gaps}.

\subsubsection{Planting solutions}
\label{sec:one-d-plant-unbounded}

Having shown by Lemma~\ref{lemma:interior-theta-bounded-gaps-1d}
that unless for all
$\theta \in \interior \Theta$,
\begin{equation}
  \sup_{z \in \mc{Z}} [\loss_z(\theta) - \loss_z\opt(\Theta)] < \infty,
  \label{eqn:no-infinite-one-d}
\end{equation}
the minimax risk is infinite, we now consider the case that
Condition~\ref{cond:unbounded-case} fails while
inequality~\eqref{eqn:no-infinite-one-d} holds.
As a consequence of inequality~\eqref{eqn:no-infinite-one-d},
all loss functions $\loss_z$
necessarily grow (or at least cannot decrease) as the parameter
$|\theta| \to \infty$:
\begin{lemma}
  \label{lemma:small-derivatives-eventually}
  Let inequality~\eqref{eqn:no-infinite-one-d} hold
  for all $\theta \in \interior \Theta$.
  Then
  \begin{equation*}
    \lim_{\theta \downarrow -\infty} \sup_{z \in \mc{Z}}
    \deriv_+ \loss_z(\theta) \le 0
    ~~ \mbox{and} ~~
    \lim_{\theta \uparrow \infty} \inf_{z \in \mc{Z}}
    \deriv_- \loss_z(\theta) \ge 0.
  \end{equation*}
\end{lemma}
\begin{proof}
  We prove the left claim;
  the right is similar.
  Assume for the sake of contradiction $\sup_{z \in \mc{Z}} \deriv_+
  \loss_z(\theta) \ge a > 0$ for all $\theta \in \R$.
  Then fixing $\theta_0 \in \interior \Theta$,
  let $\theta = \theta_0 - t$ for some (arbitrarily large) $t > 0$.
  Choose $z$ such that $\deriv_+ \loss_z(\theta) \ge a/2$.
  Then
  $\loss_z(\theta_0) \ge \loss_z(\theta) + \frac{at}{2}$, and
  taking $t \uparrow \infty$ would contradict that
  $\sup_z [\loss_z(\theta_0) - \loss_z\opt(\Theta)] < \infty$.
\end{proof}

\begin{figure}
  \begin{center}
    \begin{tabular}{cc}
      \begin{overpic}[width=.6\columnwidth]{%,grid]{%
          Figures/unbounded-lower-bound}
        \put(22,4){$\theta_0$}
        \put(1,20){$\loss_{z_0}$}
        \put(30, 12.5){$\loss_{z_1}$}
        \put(43.5, 13){$\loss_{z_2}$}
        \put(57, 14){$\loss_{z_3}$}
        \put(70, 15){$\loss_{z_4}$}
      \end{overpic} &
      \begin{minipage}{.4\columnwidth}
        \vspace{-6cm}
        \caption{\label{fig:place-solutions-right}
          Choosing losses whose minima always are to the right.
          The loss $\loss_{z_0}$ has fixed positive derivative at
          $\theta_0$, while
          $\loss_{z_1}$, $\loss_{z_2}$, $\ldots$ cannot be minimized
          except by taking $\theta \gg \theta_0$.}
      \end{minipage}
    \end{tabular}
  \end{center}
\end{figure}

We leverage this lemma and the assumption~\eqref{eqn:no-infinite-one-d}
to place solutions, as in our outline in Section~\ref{sec:proof-outline}.
Assume without loss of generality that $\Theta$ is unbounded above,
and as Condition~\ref{cond:unbounded-case} fails,
we see that there exists $c > 0$ such that
for any sequence $\theta_n$, there exists $z_n \in \mc{Z}$ for which
\begin{equation}
  \label{eqn:big-ol-growth}
  \loss_{z_n}(\theta_n + n^2) - \loss_{z_n}\opt \ge c
\end{equation}
(where we have used Lemma~\ref{lemma:conditions-same-one-dim}).
Figure~\ref{fig:place-solutions-right} captures the basic idea: these
loss functions $\loss_{z_n}$ ``move right'' so that even for
$\theta_n \to \infty$, we always have $\loss_{z_n}(\theta_n) -
\loss_{z_n}\opt(\Theta) \ge c$, but the minimum
of the loss $\loss_{z_0}$ is static.
We then make it so testing between the case that the loss is
$\loss_{z_n}$ or incorporates a bit of $\loss_{z_0}$ is hard.

By the assumption that $\Theta$ is achievable,
there exists a constant $g > 0$ and $z_0$ and $\theta_0$
such that
\begin{equation*}
  \deriv_- \loss_{z_0}(\theta_0) \ge g.
\end{equation*}
Because $\theta
\mapsto \deriv_- \loss_z(\theta)$ is non-decreasing,
for all
$\theta_0 \le \theta_n \to \infty$
Lemma~\ref{lemma:small-derivatives-eventually}
shows that for any $\delta > 0$,
we may take $N$ large enough that
$\theta_n \ge N$ implies
\begin{equation*}
  \deriv_- \loss_{z_0}(\theta_n)
  \ge \deriv_- \loss_{z_0}(\theta_0) \ge g
  ~~ \mbox{and} ~~
  \inf_{\theta \ge \theta_n} \inf_{z \in \mc{Z}}
  \deriv_- \loss_z(\theta_0) \ge - \delta,
\end{equation*}
while the failure of Condition~\ref{cond:unbounded-case}
guarantees there exists $z_n \in \mc{Z}$ for which
$\deriv_- \loss_{z_n}(\theta_n) < 0$ and
inequality~\eqref{eqn:big-ol-growth} holds.

Define the probability distributions
\begin{equation*}
  P_0 \defeq \pointmass_{z_n}
  ~~ \mbox{and} ~~
  P_1 \defeq \left(1 - \frac{1}{n}\right) \pointmass_{z_n}
  + \frac{1}{n} \pointmass_{z_0}.
\end{equation*}
By inspection (and ignoring the measure-zero sets of non-differentiable
points, which are unimportant for this argument), we see that
\begin{align*}
  \poploss_{P_1}'(\theta) \ge -\left(1 - \frac{1}{n}\right)
  \delta + \frac{g}{n}
  \ge \frac{g}{2n}
  ~~ \mbox{for}~~ \theta \ge \theta_n,
\end{align*}
and so $\theta \ge \theta_n + n^2$ implies
\begin{equation*}
  \poploss_{P_1}(\theta)
  = \poploss_{P_1}(\theta_n)
  + \int_{\theta_n}^\theta \poploss_{P_1}'(t) dt
  \ge \poploss_{P_1}(\theta_n)
  + \int_0^{n^2} \frac{g}{2n} dt = \poploss_{P_1}(\theta_n) + \frac{g n}{2}.
\end{equation*}
Similarly, $\theta \le \theta_n + n^2$ implies
$\poploss_{P_0}(\theta) \ge \poploss_{P_0}\opt(\Theta) + c$
by assumption~\eqref{eqn:big-ol-growth}.
We therefore obtain
\begin{equation*}
  \dopt(\poploss_{P_0}, \poploss_{P_1}) \ge \min\left\{c,
  \frac{g n}{2}\right\}
  \ge c
\end{equation*}
for all large $n$.
The minimax bound~\eqref{eqn:minimax-to-testing}
thus implies
\begin{equation*}
  \minimaxlow(\loss, \mc{Z}, \Theta)
  \ge \frac{c}{2} \left(1 - \tvnorm{P_0^n - P_1^n}\right).
\end{equation*}
Applying Lemma~\ref{lemma:tv-and-hellinger} shows that $\tvnorm{P_0^n
  - P_1^n} \le (1 + o(1)) \sqrt{1 - e^{-2}}$, giving the theorem in
the one-dimensional case.

\subsection{Proof of Theorem~\ref{theorem:unbounded-case}:
  the $d$-dimensional case}
\label{sec:proof-general-unbounded-lower}

We first perform a reduction to simplify
our calculations
by modifying Condition~\ref{cond:unbounded-case} to a similar
condition that will turn out to be equivalent, but which leverages halfspaces
to more easily take advantage of convexity.
To state the condition, let
\begin{equation*}
  H_{v, t} \defeq \left\{\theta \mid \<\theta, v\> \le t \ltwo{v} \right\}
\end{equation*}
be the closed halfspace indexed by the direction $v / \ltwo{v}$
(where if $v = 0$ then obviously $H_{v, t} = \R^d$).
Consider the following
alternative version of Condition~\ref{cond:unbounded-case}:
% \renewcommand{\theourcondition}{C.\arabic{ourcondition}'}
% \addtocounter{ourcondition}{-1}
\makeatletter
\renewcommand{\theourcondition}{%
  C.\@arabic{\numexpr\value{ourcondition}-1}'%
}
\makeatother
\conditionbox{
  \label{cond:recession-unbounded}
  For all $\epsilon > 0$, there exists a $t < \infty$ such that
  \begin{equation*}
    \poploss_P\opt(H_{v,t} \cap \Theta)
    - \poploss_P\opt(\Theta) \le \epsilon
  \end{equation*}
  for each direction $v$ and all $P \in \Pdiscrete(\mc{Z})$.
}
% Reset the increments for conditions
\makeatletter
\renewcommand{\theourcondition}{%
  C.\@arabic{\numexpr\value{ourcondition}-1}%
}
\makeatother

Because any compact convex set $\Theta_0$ coincides with
the intersection of all closed
halfspaces containing it, Condition~\ref{cond:recession-unbounded} is weaker
than Condition~\ref{cond:unbounded-case}.
Convexity, however, means that Condition~\ref{cond:recession-unbounded}
implies~\ref{cond:unbounded-case}, making them equivalent:
%
%% The next lemma shows this:
\begin{lemma}
  \label{lemma:halfspace-minimizers-to-ball}
  Let $\poploss : \R^d \to \Rup$ be closed convex, $t < \infty$, $\epsilon >
  0$, and assume that
  \begin{equation*}
    \poploss\opt(H_{v,t} \cap \Theta) < \poploss\opt(\Theta) + \epsilon
  \end{equation*}
  for each $v \in \R^d$.
  Then
  \begin{equation*}
    \poploss\opt(t \ball_2 \cap \Theta) \le \poploss\opt(\Theta) + \epsilon.
  \end{equation*}
\end{lemma}
\begin{proof}
  Recall the sub-optimality sets
  $\sublevel_\epsilon(\poploss)
  \defeq \{\theta \in \Theta \mid \poploss(\theta)
  \le \poploss\opt(\Theta) + \epsilon\}$,
  which are closed convex.
  The condition that $\poploss\opt(H_{v,t} \cap \Theta) <
  \poploss\opt(\Theta) + \epsilon$ implies that $H_{v,t} \cap \Theta \cap
  \sublevel_\epsilon \neq \emptyset$ for each $v$.
  The set $\Theta_t
  \defeq \Theta \cap_v H_{v,t}$ is compact
  convex.
  So if $\sublevel_\epsilon \cap \Theta_t = \emptyset$,
  there necessarily exists
  a hyperplane strictly separating $\Theta_t$ from $\sublevel_\epsilon$, meaning
  a vector $u \in \sphere^{d-1}$ for which
  \begin{equation*}
    \inf_{\theta \in \sublevel_\epsilon} \<u, \theta\> > \sup_{\theta \in \Theta_t}
    \<u, \theta\>.
  \end{equation*}
  But we know by assumption that $H_{u,t} \cap \Theta \cap \sublevel_\epsilon
  \neq \emptyset$, which contradicts this inequality.
\end{proof}

Thus, if Condition~\ref{cond:recession-unbounded}
holds, then
for each $\epsilon > 0$, there exists a finite $t$
such that the compact set $\Theta_t \defeq t \ball_2 \cap \Theta \subset \Theta$
satisfies $\inf_{\theta \in \Theta_t} [\poploss_P(\theta) - \poploss_P\opt(\Theta)]
\le \epsilon$ for all $P \in \Pdiscrete(\mc{Z})$.
As a consequence, if Condition~\ref{cond:unbounded-case} fails
(equivalently, \ref{cond:recession-unbounded} fails),
we have the half-space analogue of the condition~\eqref{eqn:big-ol-growth}
in the one-dimensional case:
there exists a $c > 0$
such that for all $t < \infty$, there is some $v \in \sphere^{d-1}$ and a $Q
\in \Pdiscrete(\mc{Z})$ such that
\begin{equation}
  \label{eqn:converse-unbounded-condition}
  \inf_{\theta \in H_{v, t} \cap \Theta}
  \poploss_Q(\theta) \ge \poploss_Q\opt(\Theta) + c.
\end{equation}
This characterization will be central to our lower bounds, and for the
remainder of the proof, we assume that Condition~\ref{cond:compact-case} holds
(because if it fails, Theorem~\ref{theorem:minimax-lower} gives the
result).
The remainder of the proof mimics the strategy we use in the one-dimensional
case.

\subsubsection{Eliminating infinite losses}

As in Sec.~\ref{sec:proof-general-unbounded-lower},
we first eliminate cases in which the losses $\loss_z$ may tend
to $-\infty$ in any way.
We first eliminate the case that
$\loss_z\opt(\Theta) = -\infty$ for some $z \in \mc{Z}$ as a triviality,
as in Lemma~\ref{lemma:no-infinite-one-d}.

\begin{lemma}
  \label{lemma:no-infinite-values}
  If there exists $z \in \mc{Z}$ such that $\loss_z\opt(\Theta) = -\infty$,
  then
  \begin{equation*}
    \minimaxlow_n(\loss, \mc{Z}, \Theta) = +\infty.
  \end{equation*}
\end{lemma}
\begin{proof}
  Let $z$ be such that $\loss_z\opt(\Theta) = -\infty$,
  $\theta_0 \in \interior \Theta$, and use the shorthand
  $\nabla \loss_z(\theta_0) \in \partial \loss_z(\theta_0)$.
  We now place gradients (recall Fig.~\ref{fig:plant-hyperplane})
  to guarantee that a particular mixture distribution has finite loss.
  For $v = \nabla \loss_z(\theta_0) / \ltwo{\nabla \loss_z(\theta_0)}$,
  Lemma~\ref{lemma:i-can-find-directable} guarantees that
  there is a finitely supported distribution $Q$ and $\beta > 0$
  for which
  a point $\theta_1$ of the form
  $\theta_1 \defeq \theta_0 + t v + v_\perp$ for some $t \ge 0$
  and $\<v, v_\perp\> = 0$
  satisfies
  \begin{equation*}
    -\beta \nabla \loss_z(\theta_0) \in \partial \poploss_Q(\theta_1).
  \end{equation*}
  We then obtain
  \begin{align*}
    \poploss_Q(\theta) + \beta \loss_z(\theta)
    & \ge \poploss_Q(\theta_1) + \<-\beta \nabla \loss_z(\theta_0),
    \theta - \theta_1\>
    + \beta \loss_z(\theta_0)
    + \beta \<\nabla \loss_z(\theta_0), \theta - \theta_0\> \\
    & = \poploss_Q(\theta_1)
    + t \beta \ltwo{\nabla \loss_z(\theta_0)}
    + \beta \loss_z(\theta_0)
  \end{align*}
  by the choice of $v$.
  In particular,
  \begin{equation*}
    \inf_\theta \poploss_Q(\theta) + \beta \loss_z(\theta)
    \ge \poploss_Q(\theta_1) + \beta \loss_z(\theta_0)
    > -\infty,
  \end{equation*}
  while $\poploss_Q\opt(\Theta) < \infty$ and $\loss_z\opt(\Theta) = -\infty$,
  and for any compact subset $\Theta_0 \subset \Theta$, properness of
  $\loss_z$ guarantees $\loss_z\opt(\Theta_0) > -\infty$.
  Defining $p = \frac{1}{1 + \beta}$, the distributions
  \begin{equation*}
    P_0 = \pointmass_z
    ~~ \mbox{and} ~~
    P_1 = \frac{1}{\beta + 1} Q + \frac{\beta}{\beta + 1} \pointmass_z
    = p Q + (1 - p) \pointmass_z
  \end{equation*}
  yield losses for which, evidently,
  \begin{equation*}
    \dopt(\poploss_{P_0}, \poploss_{P_1})
    = +\infty.
  \end{equation*}
  As in the proof of Lemma~\ref{lemma:no-infinite-one-d},
  taking $\epsilon = \frac{\beta}{\beta + 1}$
  yields $\tvnorm{P_0 - P_1} \le 1 - \epsilon$,
  so Lemma~\ref{lemma:tv-and-hellinger} implies
  $\tvnorm{P_0^n - P_1^n} \le \sqrt{1 - \epsilon^{2n}} < 1$.
  Thus $\minimaxlow_n = +\infty$.
\end{proof}

\begin{lemma}
  \label{lemma:interior-theta-bounded-gaps}
  Let Condition~\ref{cond:compact-case}
  hold.
  If for some $\theta_0 \in \interior \Theta$ we have
  \begin{equation*}
    \sup_{z \in \mc{Z}}
    \left[\loss_z(\theta_0) - \loss_z\opt(\Theta)\right] = \infty,
  \end{equation*}
  then
  $\minimaxlow_n(\loss, \mc{Z}, \Theta) = +\infty$.
\end{lemma}
\begin{proof}
  Lemma~\ref{lemma:no-infinite-values} allows us to assume
  that $\loss_z\opt(\Theta) > -\infty$
  for each $z \in \mc{Z}$ (otherwise, $\minimaxlow_n = +\infty$
  in any case).
  By Condition~\ref{cond:compact-case},
  there exists $\lipconst_0 < \infty$ such that $\partial
  \loss_z(\theta_0) \subset \lipconst_0 \ball_2$ for each $z \in
  \mc{Z}$.
  Lemma~\ref{lemma:i-can-find-directable}
  thus guarantees that there exist constants $\alpha_0, \beta_0 > 0$
  such that
  for any $z \in \mc{Z}$ and
  any $\nabla \loss_z(\theta_0) \in \partial \loss_z(\theta_0)$,
  there is a
  finitely supported distribution $Q$ and
  point of the form
  $\theta_1 = \theta_0 + t v + v_\perp$
  for the
  direction $v = \nabla \loss_z(\theta_0) / \ltwo{\nabla \loss_z(\theta_0)}$
  and some $\alpha_0 \le t \le \frac{1}{\alpha_0}$
  for which
  \begin{equation*}
    -\beta \nabla \loss_z(\theta_0) \in \partial \poploss_Q(\theta_1)
    ~~~ \mbox{for~a}~ \beta \ge \beta_0.
  \end{equation*}
  (In Figure~\ref{fig:plant-hyperplane},
  this corresponds to the choice $\theta_1 = \theta_0 + t v + v_\perp$
  for $v = \nabla \loss_z(\theta_0) / \ltwo{\nabla \loss_z(\theta_0)}$,
  though in this case $\nabla \loss_z(\theta_0)$ is bounded
  for all $z$, while the value $\loss_z(\theta_0) - \loss_z\opt$
  may be arbitrary.)
  
  Choose $p = \frac{1}{1 + \beta/2}$,
  so that $\beta p - (1 - p) = \frac{\beta}{2 + \beta}$, and so
  \begin{align*}
    \lefteqn{p \poploss_Q(\theta) + (1 - p) \loss_z(\theta)} \\
%%     & \ge
%%     p \poploss_Q(\theta) + (1 - p)
%%     \left[\loss_z(\theta_0) + \<\nabla \loss_z(\theta_0), \theta - \theta_0\>
%%       \right] \\
    & \stackrel{(i)}{\ge} p \left[\poploss_Q(\theta_1)
      + \<-\beta \nabla \loss_z(\theta_0), \theta - \theta_1\>
      \right]
    + (1 - p)
    \left[\loss_z(\theta_0) + \<\nabla \loss_z(\theta_0), \theta - \theta_0\>
      \right] \\
    & \stackrel{(ii)}{=}
    p \poploss_Q(\theta_1)
    + (1 - p) \loss_z(\theta_0)
    + p \<-\beta \nabla \loss_z(\theta_0), \theta - \theta_0\>
    + t p \beta \ltwo{\nabla \loss_z(\theta_0)}
    + (1 - p) \<\nabla \loss_z(\theta_0), \theta - \theta_0\> \\
    & = p \poploss_Q(\theta_1)
    + (1 - p) \loss_z(\theta_0)
    + t p \beta \ltwo{\nabla \loss_z(\theta_0)}
    - \frac{\beta}{2 + \beta}
    \<\nabla \loss_z(\theta_0), \theta - \theta_0\>.
  \end{align*}
  Here, inequality~$(i)$ follows from the first-order convexity condition,
  while inequality~$(ii)$ uses the definition of $\theta_1$.

  Define the two point distributions
  \begin{equation*}
    P_0 = \pointmass_z ~~ \mbox{and} ~~
    P_1 = (1 - p) \pointmass_z + p Q.
  \end{equation*}
  Letting $\gamma = \frac{\beta}{2 + \beta}$ for shorthand,
  noting that $\gamma \ge \frac{\beta_0}{2 + \beta_0}$
  regardless of the choice of $z$,
  we find that for any $\nabla \poploss_Q(\theta_0) \in \partial
  \poploss_Q(\theta_0)$,
  the first order condition for convexity implies
  \begin{align*}
    \poploss_{P_1}(\theta)
    & \ge
    p \poploss_{Q}(\theta_1)
    + (1 - p) \loss_z(\theta_0)
    - \gamma \<\nabla \loss_z(\theta_0), \theta - \theta_0\> \\
    & \ge \poploss_{P_1}(\theta_0)
    + p \<\nabla \poploss_Q(\theta_0), \theta_1 - \theta_0\>
    - \gamma \<\nabla \loss_z(\theta_0), \theta - \theta_0\> \\
    & \ge \poploss_{P_1}(\theta_0)
    - \frac{2p \lipconst_0}{\alpha_0}
    - \gamma \<\nabla \loss_z(\theta_0), \theta - \theta_0\>,
  \end{align*}
  where the final inequality follows via Cauchy-Schwarz.
  
  We now demonstrate the separation in optimization
  distance~\eqref{eqn:opt-distance}.
  By the preceding display, for any $c > 0$
  \begin{equation*}
    \poploss_{P_1}(\theta)
    \le \poploss_{P_1}(\theta_0) + c
    ~~ \mbox{implies} ~~
    - \gamma \<\nabla \loss_z(\theta_0), \theta - \theta_0\>
    \le c + \frac{2 \lipconst_0}{\alpha_0}.
  \end{equation*}
  Because the choice of $z \in \mc{Z}$ was arbitrary,
  we may assume that for any $K < \infty$,
  we choose $z$ so that
  $\loss_z(\theta_0) - \loss_z\opt(\Theta) \ge K$.
  Then
  \begin{equation*}
    \loss_z(\theta) \le \loss_z\opt + K/2
    ~~ \mbox{implies} ~~
    \loss_z(\theta) \le \loss_z(\theta_0) - K/2
    ~~ \mbox{so} ~~
    -\frac{K}{2} \ge \<\nabla \loss_z(\theta_0), \theta - \theta_0\>.
  \end{equation*}
  %
%%   Thus, for any $c > 0$,
%%   that
%%   $\poploss_{P_1}(\theta) \le \poploss_{P_1}\opt(\Theta) + c$
%%   implies $\<\nabla \loss_z(\theta_0), \theta - \theta_0\>
%%   \ge -\frac{c + 2 \lipconst_0 / \alpha_0}{\gamma}$, while
%%   $\loss_z(\theta) \le \loss_z\opt(\theta) + \frac{K}{2}$
%%   implies $-\frac{K}{2} \ge \<\nabla \loss_z(\theta_0), \theta - \theta_0\>$.
  %
  Setting $c = \frac{\gamma K}{2} - \frac{2 \lipconst_0}{\alpha_0}$
  then yields
  that for \emph{any} large enough (but finite) $K$, we can choose
  distributions $P_0$ and $P_1$ on $\mc{Z}$ for which
  \begin{equation*}
    \dopt(\poploss_{P_1}, \poploss_{P_0})
    \ge \min\left\{\gamma, 1\right\} \cdot \frac{K}{2}
    \ge \min\left\{\frac{\beta_0}{2 + \beta_0}, 1 \right\}
    \cdot \frac{K}{2}
  \end{equation*}
  while
  \begin{equation*}
    \tvnorm{P_0 - P_1}
    \le p = \frac{1}{1 + \beta/2} \le \frac{2}{2 + \beta_0}.
  \end{equation*}
  Applying Lemma~\ref{lemma:tv-and-hellinger},
  $\tvnorm{P_0^n - P_1^n}
  \le \sqrt{1 - (1 - p)^{2n}} < 1$.
  Take $K$ arbitrarily large.
\end{proof}

\subsubsection{Planting solutions}

We now turn to analogies of the arguments
in Section~\ref{sec:one-d-plant-unbounded} that allow us to
show separation in population losses by planting
solutions appropriately.
The
first step in this development
is to show that, so long as
$\inf_z \loss_z\opt(\Theta) > -\infty$,
all loss functions must eventually
grow away from any $\theta_0 \in \interior \Theta$,
as in Lemma~\ref{lemma:small-derivatives-eventually}.
To state the result, recall
the left derivative
%\begin{equation*}
$\deriv_- f(\theta; v) \defeq
\inf\left\{\<g, v\> \mid g \in \partial f(\theta)
\right\}$
from Section~\ref{sec:first-order-convex}.
% \end{equation*}
%% be the smallest subgradient of the one-dimensional function $h(t) \defeq
%% f(\theta + tv)$ at $t = 0$ (cf.~\cite[Ch.~VI.2.3]{HiriartUrrutyLe93}).

\begin{lemma}
  \label{lemma:derivatives-grow-eventually}
  Let Condition~\ref{cond:compact-case} hold
  and assume $\inf_n \minimaxlow_n(\loss, \mc{Z}, \Theta) < \infty$.
  Then for
  any compact set $\Theta_0 \subset \interior \Theta$,
  \begin{equation*}
    \liminf_{t \to \infty} \inf_{\ltwo{\theta} \ge t,
      \theta \in \Theta}
    \inf_{z \in \mc{Z}}
    \inf_{\theta_0 \in \Theta_0}
    \deriv_- \loss_z\left(\theta;
    \frac{\theta - \theta_0}{\ltwo{\theta - \theta_0}}\right) \ge 0.
  \end{equation*}
\end{lemma}
\begin{proof}
  Because it is the supremum of closed convex functions, the function
  \begin{equation*}
    \wb{\loss}(\theta) \defeq \sup_{z \in \mc{Z}} [\loss_z(\theta)
      - \loss_z(\Theta)\opt]
  \end{equation*}
  is closed convex.
  By Lemma~\ref{lemma:interior-theta-bounded-gaps}, $\wb{\loss}(\theta_0) <
  \infty$ for any $\theta_0 \in \interior \Theta$, and as any convex function
  is continuous on the interior of its
  domain~\cite[Ch.~V]{HiriartUrrutyLe93}, the
  fuction $\wb{\loss}$ is evidently
  continous on $\Theta_0$.
  It therefore attains its infimum and supremum on $\Theta_0$,
  whence $\sup_{\theta_0 \in K} \wb{\loss}(\theta_0) < \infty$.
  
  For the sake of contradiction, assume that the limit infimum in the
  lemma statement is $-2c$, where $c > 0$.
  Then there is some finite $t > 0$,
  $\theta \in \Theta$ satisfying
  $\dist(\theta, \Theta_0) \ge t$,
  and $\theta_0 \in \Theta_0$ and $z \in \mc{Z}$ for which
  \begin{equation*}
    \deriv_- \loss_z(\theta; v) \le -c,
  \end{equation*}
  where we defined
  $v = (\theta -
  \theta_0) / \ltwo{\theta - \theta_0}$ for shorthand.
  The monotonicity of (sub)gradients then implies
  \begin{equation*}
    \deriv_- \loss_z\left(u \theta + (1 - u) \theta_0; v
    \right)
    \le \deriv_- \loss_z(\theta; v) \le -c
    ~~ \mbox{for~} u \in [0, 1].
  \end{equation*}

  For any convex $f$ and $\theta_0 \in \interior \dom f$,
  we have the integral form~\citep[Thm.~VI.2.3.4]{HiriartUrrutyLe93}
  \begin{equation*}
    f(\theta) = f(\theta_0) + \int_0^1 \deriv f(\theta_0 + t
    (\theta - \theta_0); \theta - \theta_0) dt
    = f(\theta_0) + \int_0^1 \deriv_- f(\theta_0 + t(\theta - \theta_0);
    \theta - \theta_0) dt.
  \end{equation*}
  Applying this to $\loss_z$, we see
  $\loss_z(\theta) = \loss_z(\theta_0)
  + \int_0^1 \deriv_- \loss_z (\theta_0 + u(\theta - \theta_0); v)
  du \cdot \ltwo{\theta_0 - \theta}
  \le \loss_z(\theta_0) - c t$.
  For $t$ large enough, this contradicts that
  $\sup_{\theta_0 \in \Theta_0}
  \sup_z [\loss_z(\theta_0) - \loss_z\opt] < \infty$.
\end{proof}

Because integration and directional
differentiation commute for closed convex functions~\cite{Bertsekas73},
we may replace the infimum over $z \in \mc{Z}$ with
an infimum over $P \in \Pdiscrete(\mc{Z})$, so that
\begin{equation}
  \label{eqn:derivatives-grow-eventually}
  \liminf_{t \to \infty}
  \inf_{\ltwo{\theta} \ge t, \theta \in \Theta}
  \inf_{P \in \Pdiscrete(\mc{Z})}
  \inf_{\theta_0 \in \Theta_0}
  \deriv_- \poploss_P\left(\theta;
  \frac{\theta - \theta_0}{\ltwo{\theta - \theta_0}}\right)
  \ge 0
\end{equation}
for any compact $\Theta_0 \subset \interior \Theta$.

Now we provide an analogue of
Lemma~\ref{lemma:i-can-find-directable} that allows us to
plant solutions more carefully, constructing a finitely supported
distribution $\wb{Q}$ such that $\poploss_{\wb{Q}}$ is coercive
and its minimum belongs to a compact set.
This construction mimics the distinct point $\theta_0$ in the
one-dimensional case (Fig.~\ref{fig:place-solutions-right} and $\loss_{z_0}$
there), but requires more care.
\begin{lemma}
  \label{lemma:grow-outside-box}
  Let $\theta_0 \in \interior \Theta$.
  There exists a finitely supported distribution $\wb{Q}$ and values $\gamma
  > 0$ and $b < \infty$ such that
  \begin{equation*}
    \deriv \poploss_{\wb{Q}}\left(\theta;
    \frac{\theta - \theta_0}{\ltwo{\theta - \theta_0}}\right)
    \ge \gamma
  \end{equation*}
  whenever $\theta \in \Theta$, $\ltwo{\theta - \theta_0} \ge b$.
\end{lemma}
\begin{proof}
  Let $\mc{E} = \{e_i, -e_i\}_{i = 1}^d$ be the collection of
  standard basis vectors and their negations.
  Use Lemma~\ref{lemma:i-can-find-directable}
  to find a convex compact set
  $K \subset \Theta$ such that
  for each $u \in \mc{E}$, there esists
  a finitely supported distribution $Q_u$ and
  scalar $\alpha_u > 0$ such that
  \begin{equation*}
    \alpha_u u \in \partial \poploss_{Q_u}(\theta_u)
    ~~ \mbox{for~some~} \theta_u \in K.
  \end{equation*}
  Thus
  for each $u \in \mc{E}$,
  \begin{equation*}
    \poploss_{Q_u}(\theta)
    - \poploss_{Q_u}(\theta_u) \ge \alpha_u \<\theta - \theta_u, u\>
  \end{equation*}

  By Lemma~\ref{lemma:no-infinite-values}, we see that w.l.o.g.\ we may
  assume $\poploss_{Q_u}\opt(\Theta) = 0$, and so we also have
  $\poploss_{Q_u}(\theta) - \poploss_{Q_u}(\theta_u) \ge
  -\poploss_{Q_u}(\theta_u)$.
  We obtain
  \begin{equation*}
    \frac{1}{2d} \sum_{u \in \mc{E}} (\poploss_{Q_u}(\theta)
    - \poploss_{Q_u}(\theta_u))
    \ge \frac{1}{2d}
    \sum_{u \in \mc{E}} \left\{\alpha_u \<\theta - \theta_u, u\> \vee
    - \poploss_{Q_u}(\theta_u)\right\}.
  \end{equation*}
  Letting $\alpha\subopt = \min_{u \in \mc{E}} \alpha_u > 0$,
  we see that once $\alpha\subopt
  \min_{u \in \mc{E}} \linf{\theta - \theta_u}
  \ge 2d \max_{u \in \mc{E}} \poploss_{Q_u}(\theta_u)$,
  \begin{equation*}
    \frac{1}{2d} \sum_{u \in \mc{E}} (\poploss_{Q_u}(\theta)
    - \poploss_{Q_u}(\theta_u))
    \ge \frac{1}{2d} \alpha\subopt
    \inf_{\theta_1 \in K} \linf{\theta - \theta_1}
    - \max_{u \in \mc{E}} \poploss_{Q_u}(\theta_u) > 0.
  \end{equation*}
  In particular,
  $\poploss_{\wb{Q}}$ grows at least linearly outside of
  a neighborhood of $K$
  for $\wb{Q} = \frac{1}{2d} \sum_{u \in \mc{U}} Q_u$.
  As
  $\dist(\theta_0, K) < \infty$,
  this linear growth
  also applies far from $\theta_0$,
  which implies the lemma.
\end{proof}

\subsubsection{The optimization separation and testing lower bound}

\providecommand{\directionvec}{v_0} % Direction from fixed $\theta_0$

With Lemma~\ref{lemma:grow-outside-box} in hand, we
can now construct hard instances extending the construction
in the one-dimensional case (Fig.~\ref{fig:place-solutions-right}) to the
full-dimensional setting.
Let $\wb{Q}$ be the finitely supported distribution
Lemma~\ref{lemma:grow-outside-box} promises with the attendant growth
constant $\gamma > 0$, and let $0 < \delta \le 1$
be a value to be chosen.
Fix $\theta_0 \in \interior \Theta$, and for $\theta \in \R^d$, define
the direction $\directionvec(\theta) \defeq (\theta - \theta_0) /
\ltwo{\theta - \theta_0}$.
Because Condition~\ref{cond:recession-unbounded}
fails,
we may take $t < \infty$ large enough that
it satisfies the following four conditions:
\begin{enumerate}[(i)]
\item \label{item:gap-out-at-z}
  There exists $v$ with $\ltwo{v} = 1$ and $Q \in \Pdiscrete(\mc{Z})$
  for which
  \begin{equation*}
    \poploss\opt_Q(H_{v,t} \cap \Theta) \ge \poploss\opt_Q(\Theta) + c.
  \end{equation*}
\item \label{item:mixture-big-deriv}
  For any $\theta \in \Theta$
  satisfying $\<v, \theta\> \ge \frac{t}{2}$,
  \begin{equation*}
    \deriv \poploss_{\wb{Q}}\left(\theta; \directionvec(\theta)\right)
    \ge \gamma.
  \end{equation*}
\item \label{item:z-small-deriv}
  For any $\theta \in \Theta$
  satisfying $\<v, \theta\> \ge \frac{t}{2}$,
  \begin{equation*}
    \deriv \poploss_P\left(\theta; \directionvec(\theta)\right)
    \ge - \frac{\delta \gamma}{2n}
    ~~ \mbox{for~all~} P \in \Pdiscrete(\mc{Z}).
  \end{equation*}
\item \label{item:t-is-big} $t \ge \frac{4 c n}{\gamma \delta}$.
\end{enumerate}

The conditions may appear to be circular, as
parts~\eqref{item:mixture-big-deriv} and~\eqref{item:z-small-deriv}
repose on the direction $v$ in part~\eqref{item:gap-out-at-z}, but
this is not truly an issue.
We may satisfy the requirement~\eqref{item:mixture-big-deriv}
via Lemma~\ref{lemma:grow-outside-box},
so long as $t$ is large, because $\ltwo{\theta} \ge \frac{t}{2}$
certainly implies $\<v, \theta\> \ge \frac{t}{2}$.
To satisfy the requirement~\eqref{item:z-small-deriv},
we similarly use Lemma~\ref{lemma:derivatives-grow-eventually}
(actually, the remark~\eqref{eqn:derivatives-grow-eventually} following
the lemma).
Satisfying requirement~\eqref{item:t-is-big} is trivial;
we may always simply take $t$ larger in the other parts,
and the failure of Condition~\ref{cond:recession-unbounded}
is sufficient to guarantee~\eqref{item:gap-out-at-z}.

With these four requirements satisfied,
consider the two distributions
\begin{equation*}
  P_0 \defeq Q
  ~~ \mbox{and} ~~
  P_1 \defeq \left(1 - \frac{\delta}{n}\right)
  Q + \frac{\delta}{n} \wb{Q}.
\end{equation*}
Let $\theta_1$ be any vector satisfying
$\<v, \theta_1\> \ge t$, and
let $\theta_{1/2}$ be the point at which the line
segment $[u \theta_1 + (1 - u) \theta_0]$ crosses the
hyperplane $\<v, \theta\> = t/2$, that is,
at $u = \frac{t}{2 \<v, \theta_1 - \theta_0\>}$.
Then $u < \half$, and the directions $\directionvec(\theta_1) =
\directionvec(\theta_{1/2}) = \directionvec(u \theta_{1/2} + (1 - u)
\theta_1)$ for all $u \in [0, 1]$.
Using the shorthand $\theta_u = u \theta_{1/2}
+ (1 - u) \theta_1$ and computing
directional derivatives,
we see that
$u \in [0, 1]$ implies
\begin{align*}
  \deriv \poploss_{P_1}(\theta_u; \directionvec(\theta_1))
  = \left(1 - \frac{\delta}{n}\right)
  \deriv\poploss_Q(\theta_u; \directionvec(\theta_1))
  + \frac{\delta}{n} \deriv \poploss_{\wb{Q}}(\theta_u; \directionvec(\theta_1))
  \ge \frac{\delta \gamma}{n} - \left(1 - \frac{\delta}{n}\right)
  \frac{\delta \gamma}{2n},
\end{align*}
where the inequality follows from the
assumptions~\eqref{item:mixture-big-deriv}
and~\eqref{item:z-small-deriv} on $t$.
Thus
\begin{align*}
  \poploss_{P_1}(\theta_1)
  & = \poploss_{P_1}(\theta_{1/2})
  + \ltwo{\theta_1 - \theta_{1/2}}
  \int_0^1 \deriv \poploss_{P_1}\left((1 - u) \theta_{1/2}
  + u \theta_1; \directionvec(\theta_1)\right) du \\
  & \ge \poploss_{P_1}(\theta_{1/2})
  + \ltwo{\theta_1 - \theta_{1/2}}
  \frac{\delta \gamma}{2n} \\
  & \ge \poploss_{P_1}(\theta_{1/2})
  + \frac{t}{2} \cdot \frac{\delta \gamma}{2n}
  \ge \poploss_{P_1}(\theta_{1/2})
  + c,
\end{align*}
where the final step uses the choice~\eqref{item:t-is-big} that $t \ge
\frac{4 c n}{\delta \gamma}$.

Performing a similar calculation, we see that if $\<v, \theta_1\> \le t$,
then
\begin{equation*}
  \poploss_{P_0}(\theta) = \poploss_Q(\theta) \ge \poploss_Q\opt(\Theta) + c
  = \poploss_{P_0}\opt(\Theta) + c
\end{equation*}
by assumption~\eqref{item:gap-out-at-z}.
Taking the contrapositive, we see that
if $\poploss_{P_1}(\theta) < c$, then
necessarily $\<\theta, v\> < t$, implying that
$\poploss_{P_0}(\theta) \ge \poploss_{P_0}\opt(\Theta) + c$,
while if $\poploss_{P_0}(\theta) < \poploss_{P_0}\opt(\Theta) + c$,
then $\<\theta, v\> \ge t$ and
$\poploss_{P_1}(\theta) \ge \poploss_{P_1}\opt(\Theta) + c$.
That is,
\begin{equation*}
  \dopt(\poploss_{P_0}, \poploss_{P_1}) \ge c.
\end{equation*}

Computing the variation distance between the probability
distributions is straightforward: we observe that
\begin{equation*}
  \tvnorm{P_0^n - P_1^n}
  \le \sqrt{1 - (1 - \delta/n)^n}
  \sqrt{1 + (1 - \delta/n)^n}
  \to \sqrt{1 - e^{-2\delta}}.
\end{equation*}
Thus, for any $\delta > 0$ and large
enough $n$, we may choose $P_0$ and $P_1$ so that
\begin{align*}
  \minimaxlow_n(\loss, \mc{Z}, \Theta)
  \ge \frac{\dopt(\poploss_{P_0}, \poploss_{P_1})}{2}
  \left(1 - \sqrt{1 - e^{-2 \delta}}\right)
  \ge \frac{c}{2}
  \left(1 - \sqrt{1 - e^{-2 \delta}}\right).
\end{align*}
Take $\delta \downarrow 0$ and recognize that
$\minimaxlow_n$ is non-increasing in $n$.

\subsection{Proof of Proposition~\ref{proposition:unbounded-upper}}
\label{sec:proof-unbounded-upper}

%% Let us assume Condition~\ref{cond:unbounded-case} holds, so that for each
%% $\epsilon > 0$, there exist finite $\theta_0, \theta_1$ such that
%% \begin{equation*}
%%   \leftgapfunc(\theta_0) \le \frac{\epsilon}{2}
%%   ~~ \mbox{and} ~~
%%   \rightgapfunc(\theta_1) \le \frac{\epsilon}{2}.
%% \end{equation*}
%% In turn, for each of these, the definition of the
%% limit~\eqref{eqn:left-gap-function} implies there exist $t_0, t_1$, which
%% without loss of generality we may take to satisfy $t_0 \le \theta_0$ and
%% $t_1 \ge \theta_1$, such that $t \le t_0$ implies $\leftgapfunc[t](\theta_0)
%% \le \epsilon$, and $t \ge t_0$ implies $\rightgapfunc[t](\theta_1) \le
%% \epsilon$.
%% %
%% Then by construction,
%% \begin{equation*}
%%   \inf_{\theta \ge t_0}
%%   \loss_z(\theta)
%%   \begin{cases}
%%     = \inf_\theta \loss_z(\theta)
%%     = \loss_z\opt
%%     & \mbox{if~}
%%     \deriv_+ \loss_z(t_0) < 0 \\
%%     \le \loss_z(\theta_0)
%%     \le \loss_z\opt + \epsilon
%%     & \mbox{if~}
%%     \deriv_+ \loss_z(t_0) \ge 0,
%%   \end{cases}
%% \end{equation*}
%% for any $z \in \mc{Z}$, and similarly for
%% $\theta \le t_1$.
%% %
%% Summarizing, we obtain that
%% \begin{equation}
%%   \label{eqn:cannot-be-far-in-interval}
%%   \sup_{z \in \mc{Z}}
%%   \inf_{\theta \in [t_0, t_1]}
%%   \left[\loss_z(\theta) - \loss_z\opt\right] \le \epsilon.
%% \end{equation}

As a first step, we must show that
Condition~\ref{cond:unbounded-case}
extends so that inequality~\eqref{eqn:cannot-be-far-in-compact}
actually holds for all
$P \in \mc{P}_\loss(\mc{Z})$,
where $\mc{P}_\loss(\mc{Z})$ denotes the
well-defined probabilities~\eqref{eqn:well-defined}.
\begin{lemma}
  \label{lemma:discrete-to-borel}
  Let Condition~\ref{cond:unbounded-case} hold.
  Then for all $\epsilon > 0$, there is a compact
  $\Theta_0 \subset \Theta$ such that
  \begin{equation}
    \label{eqn:borel-gaps}
    \sup_{Q \in \mc{P}_\loss(\mc{Z})}
    \poploss_Q\opt(\Theta_0) - \poploss_Q\opt(\Theta)
    \le \epsilon.
  \end{equation}
\end{lemma}
\noindent
See Appendix~\ref{sec:proof-discrete-to-borel} for a proof
of this result.

Now let $\epsilon > 0$ and $\Theta_0 \subset \Theta$ be
a compact convex set for
which $\inf_{\theta \in \Theta_0} [\poploss_Q(\theta) - \poploss_Q\opt(\Theta)] \le
\epsilon$ for all $Q \in \mc{P}_\loss(\mc{Z})$,
as Lemma~\ref{lemma:discrete-to-borel} promises.
Then Theorem~\ref{theorem:minimizable}
shows that
there exists an estimator $\what{\theta}$
taking values in $\Theta_0$
and some $N < \infty$ such that $n \ge N$ implies
\begin{equation*}
  \sup_{P \in \mc{P}_\loss}
  \E_{P^n}
  \left[\poploss_P(\what{\theta}(Z_1^n))
    - \poploss_P\opt(\Theta_0)\right]
  \le \epsilon.
\end{equation*}
Of course, $\poploss_P\opt(\Theta) \le \poploss_P\opt(\Theta_0) + \epsilon$,
so $\minimax_n(\loss, \mc{Z}, \Theta) \le 2 \epsilon$.

\section{Attaining stationary points}
\label{sec:stationary-points}

In many M-estimation problems, instead of minimizing the loss $\poploss_P$,
we instead seek (near) stationary points
of $\poploss_P$ over $\Theta = \R$.
This has many motivations, including in non-convex
problems~\cite{Nesterov12b, LeeSiJoRe16, ArjevaniCaDuFoSrWo23, BubeckMi20},
where attaining strong minimization guarantees is impossible, as well as
recent work that connects ``gradient equilibrium'' to various desiderata in
prediction problems~\cite{AngelopoulosJoTi25}, including online sequence
calibration~\cite{FosterVo98, FosterHa21} and conformal
prediction~\cite{GibbsCa21,AngelopoulosCaTi23}.
The simplest motivating example comes by revisiting
Example~\ref{example:quantile-estimation} on quantile estimation, where
Theorem~\ref{theorem:unbounded-case} shows that minimizing the loss without
any assumptions is impossible.

%% it is not particularly important
%% to actually minimize the loss $\poploss_P$, but instead, to find (near)
%% stationary points.

\begin{example}[Quantile estimation, Example~\ref{example:quantile-estimation}
    revisited]
  \label{example:revisit-quantile}
  For the losses $\loss_\alpha(\theta, z)
  = \alpha \hinge{\theta - z} + (1 - \alpha) \hinge{z - \theta}$,
  we see that if
  \begin{equation*}
    -\epsilon \le \deriv_- \poploss_P(\theta)
    ~~ \mbox{and} ~~
    \deriv_+ \poploss_P(\theta) \le \epsilon,
  \end{equation*}
  then evidently
  \begin{equation*}
    1 - \alpha - \epsilon \le P(Z < \theta)
    ~~ \mbox{and} ~~
    P(Z \le \theta) \le 1 - \alpha + \epsilon,
  \end{equation*}
  so that finding a nearly stationary point---one with
  small left and right derivatives---implies an accurate
  quantile estimate.
  As we shall see in the sequel, any empirical
  minimizer $\what{\theta}_n = \argmin_\theta \poploss_{P_n}(\theta)$
  (i.e., empirical quantile) satisfies
  $\max\{\deriv_+ \poploss_P(\what{\theta}_n),
  -\deriv_- \poploss_P(\what{\theta}_n)\} \le O(1) / \sqrt{n}$ with
  high probability,
  so that
  \begin{equation*}
    P(Z_{n + 1} \le \what{\theta}_n) \ge 1 - \alpha - \frac{O(1)}{\sqrt{n}}
    ~~ \mbox{and} ~~
    P(Z_{n + 1} < \what{\theta}_n) \le 1 - \alpha + \frac{O(1)}{\sqrt{n}},
  \end{equation*}
  where $Z_{n + 1}$ is an independent
  draw from $P$.
  This contrasts strongly with the impossibility result
  for loss \emph{minimization} in Example~\ref{example:quantile-estimation}.
  %% On the other hand, it is straightforward to
  %% show that if we the reals to be the sample space,
  %% \begin{equation*}
  %%   \minimax_n(\loss_\alpha, \R, \R) = +\infty.
  %% \end{equation*}
  %% %
  %% Indeed,
  %% fix $\epsilon > 0$ to be chosen, let
  %% $z_0 < z_1$ be otherwise arbitrary, and consider the distributions
  %% $P_0 = (1 - \alpha + \epsilon) \pointmass_{z_0} + (\alpha - \epsilon)
  %% \pointmass_{z_1}$ and
  %% $P_1 = (1 - \alpha - \epsilon) \pointmass_{z_0} +
  %% (\alpha + \epsilon) \pointmass_{z_1}$.
  %% %
  %% Then $P_0$ has unique $(1 - \alpha)$ quantile $\theta\opt_0 = z_0$,
  %% while $P_1$ has unique $(1 - \alpha)$ quantile $\theta\opt_1 = z_1$,
  %% and
  %% $\dopt(\poploss_{P_0}, \poploss_{P_1}) \gtrsim
  %% \alpha |z_0 - z_1|$.
  %% %
  %% As $\tvnorm{P_0^n - P_1^n} \le \epsilon n$, we may take
  %% $\epsilon = \frac{1}{2n}$, and applying
  %% the standard minimax bound~\eqref{eqn:minimax-to-testing},
  %% \begin{equation*}
  %%   \minimax_n(\loss_\alpha, \R, \R)
  %%   \gtrsim \alpha |z_0 - z_1|.
  %% \end{equation*}
  %% %
  %% Take $z_1 - z_0 \to \infty$.
\end{example}

In this section, we expand this example to
consider 1-dimensional Lipschitz convex losses.
We view these results as essentially preliminaries and
(hopefully) starting points for future work.
We will not provide fundamental lower bounds, instead treating these results
as evidence (see also \citeauthor{AngelopoulosJoTi25}'s
discussion~\citep{AngelopoulosJoTi25}) that there are significant
differences between finding (near) stationary points and even asymptotically
minimizing convex losses.

\subsection{The definition of stationarity}

When the space $\Theta$ is unbounded, Condition~\ref{cond:unbounded-case}
and Theorem~\ref{theorem:unbounded-case} show
that ``typically''
$\inf_n \minimaxlow_n(\loss, \mc{Z}, \Theta) > 0$, meaning that
distribution free estimation is impossible. 
Accordingly,
we modify the goals to seek \emph{stationary
points} of $\poploss_P$.
If $\poploss$ were differentiable and had minimizers, a natural
metric for stationarity would be the
derivative magnitude
\begin{equation*}
  \errorstationary_\poploss(\theta) \defeq |\poploss'(\theta)|.
\end{equation*}
This is not quite satisfactory, because (as in the quantile
example~\ref{example:revisit-quantile}) we frequently seek
stationary points of non-differentiable losses, and additionally, it is
possible that $\poploss$ has no (finite) minimizer.
Accordingly, we give an expanded definition for error,
which coincides when $\poploss$ is differentiable and has attained minimizers,
but allows moving beyond these cases.
As a first attempt to address non-differentiability, we might
mimic Example~\ref{example:revisit-quantile} to define
\begin{equation*}
  %% \errorstationary_\poploss(\theta)
  %% \defeq \left(\deriv_+ \poploss(\theta)
  %% - \deriv_+ \poploss(\theta_1)\right) \indic{\theta \ge \theta_1}
  %% +
  %% \left(\deriv_- \poploss(\theta_0)
  %% - \deriv_- \poploss(\theta) \right) \indic{\theta \le \theta_0}.
  \errorstationary_\poploss(\theta) \defeq
  \hinge{\max\left\{
    - \deriv_+\poploss(\theta),
    \deriv_-\poploss(\theta)\right\}}.
\end{equation*}
For example, when $\poploss(\theta) = |\theta|$, then $\theta_0 = \theta_1 =
0$, and $\deriv_+ \poploss(0) = 1$ while $\deriv_-\poploss(0) = -1$,and
$\errorstationary_\poploss(\theta) = 0$ if and only if $\theta = 0$; more
generally, if $\theta\opt$ minimizes $\poploss$, then we know that $\deriv_-
\poploss(\theta\opt) \le 0 \le \deriv_+ \poploss(\theta\opt)$ and
$\errorstationary_\poploss(\theta\opt) = 0$.
When minimizers of $\poploss$ may not
exist, we incorporate a correction subtracting off the minimal
possible subgradient magnitude, defining
\begin{equation}
  \label{eqn:stationary-error}
    \errorstationary_\poploss(\theta)
    \defeq \hinge{
      \max\{-\deriv_+ \poploss(\theta),
      \deriv_- \poploss(\theta)\}}
    - \inf_\theta \inf_{g \in \partial \poploss(\theta)}
    \{|g|\}.
\end{equation}
In the case that $\poploss$ is differentiable and has a minimizer, we simply
have $\errorstationary_\poploss(\theta) = |\poploss'(\theta)|$.
Figure~\ref{fig:stationary-def} shows the help of the correction
term~\eqref{eqn:stationary-error}.

\begin{figure}[ht]
  \begin{center}
    \begin{tabular}{cc}
      \hspace{-.5cm}
      \begin{minipage}{.4\columnwidth}
        \vspace{-3.5cm}
        \caption{\label{fig:stationary-def}
          The function $\poploss$ decreases
          out to $\infty$, but points to the right
          are nearly stationary, approaching
          the asymptotically shallowest slope
          $\inf_\theta |\poploss'(\theta)|$.}
      \end{minipage}
      \hspace{-.5cm}
      &
      \begin{overpic}[width=.59\columnwidth]{%,grid]{%
          Figures/stationary-definition}
        \put(20,24.5){$\poploss$}
      \end{overpic}
    \end{tabular}
  \end{center}
\end{figure}

A few properties of $\errorstationary_\poploss$ show
that it provides a natural measure of non-stationarity.
\begin{lemma}
  Let $\poploss : \R \to \Rup$ be closed
  convex.
  The stationarity error~\eqref{eqn:stationary-error} satisfies
  \begin{enumerate}[(i)]
  %% \item \label{item:stationarity-error-argmins}
  %%   If $\argmin \poploss$ is non-empty,
  %%   then $\optgrad_+(\poploss) \ge 0$ and $\optgrad_-(\poploss) \le 0$.
  %%   %
  %%   If additionally $\theta\opt$ uniquely minimizes $\poploss$, then
  %%   \begin{equation*}
  %%     \errorstationary_\poploss(\theta)
  %%     = \max\left\{\deriv_+ \poploss(\theta\opt) -
  %%     \deriv_+ \poploss(\theta),
  %%     \deriv_- \poploss(\theta) - \deriv_-\poploss(\theta\opt)
  %%     \right\}.
  %%   \end{equation*}
  %% \item 
  %%   If $\argmin \poploss$ is empty, then
  %%   either $\deriv_+ \poploss(\theta) < 0$ for all
  %%   $\theta$ or $\deriv_- \poploss(\theta) > 0$ for all
  %%   $\theta$. In the former (respectively latter) case,
  %%   \begin{equation*}
  %%     \optgrad_+(\poploss) = \optgrad_-(\poploss)
  %%     = \sup_\theta \deriv_+ \poploss(\theta)
  %%     = \lim_{\theta \uparrow \infty} \deriv_+ \poploss(\theta)
  %%     ~~ \mbox{and} ~~
  %%     \errorstationary_\poploss(\theta)
  %%     = \optgrad_+(\poploss) - \deriv_+ \poploss(\theta)
  %%   \end{equation*}
  %%   (respectively, $\optgrad_+(\poploss) = \optgrad_-(\poploss)
  %%   = \lim_{\theta \downarrow -\infty} \deriv_- \poploss(\theta)$).
  \item \label{item:stationary-is-stationary}
    If $0 \in \partial \poploss(\theta)$, then
    $\errorstationary_\poploss(\theta) = 0$.
  \item \label{item:nonstationary-is-nonstationary}
    If $0 \not \in \partial \poploss(\theta)$
    and $\argmin \poploss$ is non-empty,
    then $\errorstationary_\poploss(\theta) > 0$.
  \item If $\argmin \poploss$ is empty, then
    either $\deriv_+ \poploss(\theta) < 0$ and
    $\deriv_- \poploss(\theta) < 0$ for all
    $\theta$ or $\deriv_- \poploss(\theta) > 0$
    and $\deriv_+ \poploss(\theta) > 0$ for all
    $\theta$. 
    In either case,
    the choice $s = -\sign(\deriv_- \poploss(\theta))$ yields
    \begin{equation*}
      \lim_{\theta \to s \cdot \infty} \errorstationary_\poploss(\theta) = 0.
    \end{equation*}
  \end{enumerate}
\end{lemma}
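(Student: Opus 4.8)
Everything flows from the definition~\eqref{eqn:stationary-error} together with the one-dimensional convex calculus of Lemma~\ref{lemma:properties-of-derivatives}: on $\interior\dom\poploss$ one has $\partial\poploss(\theta)=[\deriv_-\poploss(\theta),\deriv_+\poploss(\theta)]$, the maps $\theta\mapsto\deriv_\pm\poploss(\theta)$ are nondecreasing, and they are one-sidedly continuous (with the Corollary to Lemma~\ref{lemma:properties-of-derivatives} handling the endpoints of $\dom\poploss$ if $\dom\poploss\neq\R$). The key preliminary observation is that the correction term $\inf_\theta\inf\{|g|\mid g\in\partial\poploss(\theta)\}$ is exactly $\inf_\theta\inf\{|g|\mid g\in[\deriv_-\poploss(\theta),\deriv_+\poploss(\theta)]\}$, and in particular it equals $0$ if and only if $0\in\partial\poploss(\theta\opt)$ for some $\theta\opt$, i.e.\ if and only if $\argmin\poploss\neq\emptyset$.

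With this in hand, parts (i) and (ii) are immediate. For (i), $0\in\partial\poploss(\theta)$ means $\deriv_-\poploss(\theta)\le 0\le\deriv_+\poploss(\theta)$, so $\max\{-\deriv_+\poploss(\theta),\deriv_-\poploss(\theta)\}\le 0$ and the hinge vanishes, while the correction term is also $0$; hence $\errorstationary_\poploss(\theta)=0$. For (ii), $\argmin\poploss\neq\emptyset$ zeroes the correction term, so $\errorstationary_\poploss(\theta)=\hinge{\max\{-\deriv_+\poploss(\theta),\deriv_-\poploss(\theta)\}}$, and $0\notin[\deriv_-\poploss(\theta),\deriv_+\poploss(\theta)]$ forces $\deriv_-\poploss(\theta)>0$ or $\deriv_+\poploss(\theta)<0$, so the $\max$ is strictly positive and $\errorstationary_\poploss(\theta)>0$.

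For (iii) I would first establish the dichotomy. Assume $\argmin\poploss=\emptyset$ and suppose, for contradiction, that there are $\theta_0$ with $\deriv_+\poploss(\theta_0)<0$ and $\theta_1$ with $\deriv_-\poploss(\theta_1)>0$. Monotonicity of $\deriv_\pm$ rules out $\theta_1\le\theta_0$, so $\theta_0<\theta_1$; then $\theta^*\defeq\sup\{\theta\mid\deriv_+\poploss(\theta)<0\}$ is finite and lies in $[\theta_0,\theta_1)$, and monotonicity shows $\deriv_+\poploss(\theta)<0$ for $\theta<\theta^*$ while $\deriv_+\poploss(\theta)\ge 0$ for $\theta>\theta^*$. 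One-sided continuity of the directional derivatives gives $\deriv_-\poploss(\theta^*)=\lim_{\theta\uparrow\theta^*}\deriv_+\poploss(\theta)\le 0$. If $\deriv_+\poploss(\theta^*)\ge 0$, then $0\in\partial\poploss(\theta^*)$, contradicting $\argmin\poploss=\emptyset$; if $\deriv_+\poploss(\theta^*)<0$, then by continuity $\deriv_+\poploss(\theta)<0$ for $\theta$ slightly larger than $\theta^*$, contradicting the definition of $\theta^*$. Hence either $\deriv_+\poploss<0$ everywhere (whence $\deriv_-\poploss\le\deriv_+\poploss<0$ everywhere) or, symmetrically, $\deriv_-\poploss>0$ everywhere (whence $\deriv_+\poploss\ge\deriv_-\poploss>0$ everywhere).

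It remains to compute the limit; take the case $\deriv_+\poploss(\theta)<0$ for all $\theta$, the other being the mirror image about $0$. Then $\sign(\deriv_-\poploss(\theta))=-1$, so $s=+1$. Since $\partial\poploss(\theta)$ lies in the negative half-line, its closest point to $0$ is $\deriv_+\poploss(\theta)$, giving $\inf\{|g|\mid g\in\partial\poploss(\theta)\}=-\deriv_+\poploss(\theta)$ and also $\max\{-\deriv_+\poploss(\theta),\deriv_-\poploss(\theta)\}=-\deriv_+\poploss(\theta)>0$. Writing $L\defeq\lim_{\theta\to+\infty}\deriv_+\poploss(\theta)=\sup_\theta\deriv_+\poploss(\theta)\in(-\infty,0]$, which exists since $\deriv_+\poploss$ is nondecreasing and (being the limit of a nonincreasing quantity) bounded below, we get $\inf_\theta(-\deriv_+\poploss(\theta))=-L$, hence $\errorstationary_\poploss(\theta)=-\deriv_+\poploss(\theta)-(-L)=L-\deriv_+\poploss(\theta)\downarrow 0$ as $\theta\to+\infty$. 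The only genuinely delicate step is the dichotomy argument in part (iii): locating the crossing point $\theta^*$ and extracting the two contradictions requires careful use of the monotonicity and one-sided continuity of $\deriv_\pm$; parts (i), (ii) and the final limit are routine once the correction term is identified.
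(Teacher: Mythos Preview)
Your proposal is correct and follows essentially the same approach as the paper: parts~(i) and~(ii) are handled identically (the paper is terser and leaves the vanishing of the correction term implicit), and for~(iii) both arguments compute the limit via $L=\sup_\theta \deriv_+\poploss(\theta)$ after establishing the dichotomy. The only real difference is that your dichotomy argument is more explicit---you locate a crossing point $\theta^*$ and derive a contradiction from one-sided continuity---whereas the paper dispatches it in one line by noting that if $\argmin\poploss=\emptyset$ then $\poploss$ must be strictly monotone (else it would have a stationary point, hence a minimizer); your route is a careful unpacking of that same observation, not a different idea.
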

\begin{proof}
  Part~\eqref{item:stationary-is-stationary} is immediate.
  %%   To see part~\eqref{item:stationary-is-stationary},
  %%   let $0 \in \partial \poploss(\theta)$.
  %%   %
  %%   Then $\argmin \poploss(\theta) = [\theta_0, \theta_1]$ for some
  %%   $\theta_0 \le \theta_1$ is non-empty; if $\theta_0 < \theta_1$, then
  %%   Lemma~\ref{lemma:properties-of-derivatives},
  %%   part~\eqref{item:continuity-directionals} shows
  %%   that $\deriv_+\poploss(\theta_0) = \deriv_-\poploss(\theta_1) = 0$.
  %%   %
  %%   Then $\deriv_+\poploss(\theta) \ge 0$ while
  %%   $\deriv_-\poploss(\theta) \le 0$, so
  %%   that $\errorstationary_\poploss(\theta) = 0$.
  %%   %
  %%   If $\theta_0 = \theta_1$, we apply
  %%   part~\eqref{item:stationarity-error-argmins} of the lemma.
  %  
  %% then $\theta = \theta_0 = \theta_1$ uniquely minimizes
  %% $\poploss$, so $\deriv_- \poploss(\theta + t) > 0$
  %% while $\deriv_- \poploss(\theta - t) < 0$ for all
  %% $t > 0$.
  %% %
  %% Thus,   %
  %% The calculation that $\deriv_-\poploss(\theta) = \optgrad_-(\poploss)$
  %% is similar, and so
  %% $\errorstationary_\poploss(\theta) = 0$.
  %
  For claim~\eqref{item:nonstationary-is-nonstationary}, note that
  $\partial \poploss(\theta) = [a, b]$, where $a$ and $b$ satisfy
  either $-\infty < a \le b < \infty$, $a = -\infty$, or $b =
  +\infty$.
  Considering each of the cases, it becomes apparent that we need only
  consider the case that $a > 0$ (as the $b < 0$ case is similar), so
  w.l.o.g.\ we assume $a > 0$, yielding $\deriv_- \poploss(\theta) = \inf
  \partial \poploss(\theta) > 0$, so $\errorstationary_\poploss(\theta) >
  0$.

  For the final claim, note that if $\argmin \poploss = \emptyset$,
  then necessarily $\poploss$ is strictly monotone (as otherwise, it
  would have a stationary point and hence a minimizer), yielding
  the one-sidedness of $\deriv_+ \poploss$ or $\deriv_- \poploss$.
  Because
  $\deriv_+ \poploss$ and $\deriv_-
  \poploss$ are monotone non-decreasing,
  then
  considering the case that $\deriv_+ \poploss < 0$,
  the ordering in part~\eqref{item:continuity-directionals}
  of Lemma~\ref{lemma:properties-of-derivatives} gives
  that $\deriv_- \poploss < 0$ as well, and
  \begin{equation*}
    \lim_{\theta \to \infty} \deriv_+ \poploss(\theta)
    = \lim_{\theta \to \infty} \deriv_- \poploss(\theta)
    = \sup_\theta \deriv_+ \poploss(\theta)
    = \sup_{g,\theta} \{g \in \partial \poploss(\theta)\}
    \le 0.
  \end{equation*}
  That is, $\lim_{\theta \to \infty} \errorstationary_\poploss(\theta)
  = \lim_{\theta \to \infty} -\deriv_+ \poploss(\theta)
  - \lim_{\theta \to \infty} (-\deriv_+ \poploss(\theta)) = 0$.
  %% To prove the final claim, we need only consider the case
  %% that $\argmin \poploss = \emptyset$, as
  %% case~\eqref{item:stationary-is-stationary} applies otherwise.
  %% %
  %% When $\poploss$ has no minimizer,
  %% as above,
  %% it must be the case that one of $\deriv_+ \poploss(\theta) > 0$
  %% for all $\theta$ or $\deriv_- \poploss(\theta) < 0$ for all
  %% $\theta$
  %% (as otherwise, $\poploss$ is coercive and so has
  %% minimizers~\cite[Ch.~27]{Rockafellar70}).
  %% %
  %% Consider w.l.o.g.\ the former case, so that
  %% $\optgrad_-(\poploss) = 0$.
  %% %
  %% Then taking $\theta \downarrow -\infty$ and using that $\deriv_+
  %% \poploss(\theta)$ is non-increasing, we have $\sup \partial
  %% \poploss(\theta) = \deriv_+ \poploss(\theta) \downarrow \inf_\theta
  %% \deriv_+ \poploss(\theta)$.
\end{proof}

\subsection{Achieving stationarity}

When the loss $\loss$ is always (globally) Lipschitz,
empirical risk minimizers are sufficient to achieve
stationarity according to the measure~\eqref{eqn:stationary-error},
even when losses are non-differentiable.

\begin{proposition}
  \label{proposition:stationary-lipschitz}
  Assume that $\loss_z(\cdot)$ is $\lipconst$-Lipschitz for all $z \in
  \mc{Z}$. Then any empirical minimizer $\what{\theta} \in \argmin_\theta
  \poploss_{P_n}(\theta)$ satisfies
  \begin{equation*}
    \P\left(\errorstationary_{\poploss_P}(\what{\theta}) > t \right)
    \le 2 \exp\left(-\frac{n t^2}{2 \lipconst^2}\right)
    ~~ \mbox{for~} t \ge 0.
  \end{equation*}
\end{proposition}

As an immediate consequence of
Proposition~\ref{proposition:stationary-lipschitz}, we see that
\begin{equation*}
  \E\left[\errorstationary_P(\what{\theta}_n)\right]
  \le 2 \int_0^\infty \exp\left(-\frac{n t^2}{2 \lipconst^2}\right) dt
  = \frac{2 \lipconst}{\sqrt{n}}
  \int_0^\infty e^{-u^2 / 2} du
  = \frac{2 \sqrt{\pi} \lipconst}{\sqrt{n}}.
\end{equation*}
Let us consider this in the context of
Example~\ref{example:revisit-quantile}:
for the quantile loss,
\begin{equation*}
  \errorstationary_{\poploss_P}(\theta)
  = \max\left\{(1 - \alpha) - P(Z \le \theta),
  P(Z < \theta) - (1 - \alpha)\right\}.
\end{equation*}
So with probability at least $1 - 2 \exp(-t^2 / 2)$,
\begin{equation*}
  (1 - \alpha) - \frac{t}{\sqrt{n}}
  \le P(Z_{n + 1} \le \what{\theta}_n)
  ~~ \mbox{and} ~~
  P(Z_{n + 1} < \what{\theta}_n)
  \le (1 - \alpha) + \frac{t}{\sqrt{n}}.
\end{equation*}
In this particular case, the quantile satisfies a slightly
sharper guarantee~\cite[Proposition 2]{Duchi25},
but Proposition~\ref{proposition:stationary-lipschitz}
holds for general Lipschitz functions.

\subsection{Proof of Proposition~\ref{proposition:stationary-lipschitz}}
\label{sec:proof-stationary-lipschitz}

Let $\poploss = \poploss_P$ for shorthand when convenient.
We first give the argument assuming that $\argmin \poploss
= [\theta_0, \theta_1]$ is non-empty, then extend it to the case
that it is empty.
Fixing $t > 0$, consider the event that
$\errorstationary_\poploss(\what{\theta}) > t$.
For simplicity and without loss of generality,
let us assume that $\deriv_- \poploss (\what{\theta}) > t$.
Define
\begin{equation*}
  \theta_t \defeq \inf\left\{\theta \mid \deriv_- \poploss(\theta)
  \ge t \right\}
  = \sup\left\{\theta \mid \deriv_- \poploss(\theta) < t \right\},
\end{equation*}
where the equality follows from
Lemma~\ref{lemma:properties-of-derivatives},
part~\eqref{item:continuity-directionals}, as $\theta < \theta_t$ implies
$\deriv_- \poploss(\theta) < t$ and $\deriv_-\poploss(\theta_t) =
\lim_{\theta \uparrow \theta_t} \deriv_-\poploss(\theta) =
\lim_{\theta \uparrow \theta_t} \deriv_+ \poploss(\theta) \le t$.
We also observe that $\deriv_+ \poploss(\theta_t) =
\lim_{\theta \downarrow \theta_t} \deriv_+ \poploss(\theta) \ge t$,
again by Lemma~\ref{lemma:properties-of-derivatives}.

Combining the observation that
$\deriv_- \poploss(\what{\theta}) > t$ and
$\deriv_- \poploss(\theta_t) \le t$ imply
$\what{\theta} > \theta_t$
with the preceding derivations, we see that
$\what{\theta} \in \argmin \poploss_{P_n}$ implies
$\deriv_+ \poploss_{P_n}(\theta_t)
\le \deriv_- \poploss_{P_n}(\what{\theta}) \le 0$,
so
\begin{align*}
  \P\left(\deriv_- \poploss(\what{\theta}) > t\right)
  \le \P\left(\deriv_+ \poploss_{P_n}(\theta_t) \le 0
  \right)
  & = \P\left(\deriv_+ \poploss_{P_n}(\theta_t)
  - \deriv_+ \poploss_P(\theta_t) \le
  - \deriv_+ \poploss_P(\theta_t)
  \right) \\
  & \le \P\left(
  \deriv_+ \poploss_{P_n}(\theta_t)
  - \deriv_+ \poploss_P(\theta_t) \le
  - t \right)
  \le \exp\left(-\frac{n t^2}{2 \lipconst^2}\right),
\end{align*}
because $\deriv_+ \poploss_{P_n}(\theta)
= \frac{1}{n} \sum_{i = 1}^n \deriv_+ \loss_{Z_i}(\theta)$
(cf.~\cite{Bertsekas73}), where we have applied
a Hoeffding bound.
A completely parallel calculation gives that
\begin{equation*}
  \P\left(\deriv_+ \poploss(\what{\theta}) < -t\right)
  \le \exp\left(-\frac{n t^2}{2 \lipconst^2}\right)
\end{equation*}
for any $t > 0$.

Lastly, we address the case that $\argmin \poploss = \emptyset$.
Without loss of generality, assume that $\deriv_+ \poploss > 0$, so that
$\poploss$ is increasing, and $\deriv_- \poploss(-\infty) = \lim_{\theta
  \to -\infty} \deriv_- \poploss(\theta)$ satisfies $\deriv_-
\poploss(-\infty) = \inf_{\theta,g} \{g \in \partial
\poploss(\theta)\} \ge 0$
and $\errorstationary_\poploss(\theta)
= \deriv_- \poploss(\theta) - \deriv_- \poploss(-\infty)$.
If $\what{\theta} = -\infty$, then
$\errorstationary_\poploss(\what{\theta}) = 0$, so
we assume now that $\what{\theta}$ is finite.
Consider the event that $\errorstationary_\poploss(\what{\theta}) > t$,
i.e., $\deriv_- \poploss(\what{\theta}) > \deriv_-\poploss(-\infty) + t$.
As above, defining
\begin{equation*}
  \theta_t = \inf\left\{\theta \mid \deriv_- \poploss(\theta)
  - \deriv_- \poploss(-\infty) \ge t \right\},
\end{equation*}
we have $\deriv_+ \poploss(\theta_t) \ge t + \deriv_- \poploss(-\infty)
\ge t$,
and
$\what{\theta} > \theta_t$.
Then $\deriv_- \poploss_{P_n}(\theta_t)
\le 
\deriv_- \poploss_{P_n}(\what{\theta}) \le 0$,
while
\begin{align*}
  \P\left(\deriv_- \poploss(\what{\theta}) >
  \deriv_- \poploss(-\infty) + t\right)
  & \le \P\left(\deriv_+ \poploss_{P_n}(\theta_t) \le 0\right) \\
  & = \P\left(\deriv_+ \poploss_{P_n}(\theta_t)
  - \deriv_+ \poploss_P(\theta_t)
  \le -\deriv_+ \poploss_P(\theta_t)\right)
  \le \exp\left(-\frac{n t^2}{2 \lipconst^2}\right).
\end{align*}

% -*- mode: latex -*- %

\section{Discussion}
\label{sec:discussion}

This paper has developed the conditions that provide a concrete line between
those cases in which distribution-free minimization of convex losses is
asymptotically possible and when, conversely, it is not.
Here, we situate some of the results relative to the statistical and
machine learning literatures while enumerating several open questions.

In classical (stochastic) optimization and machine learning, uniform
convergence guarantees~\cite{DevroyeGyLu96, Wainwright19, BartlettMe02} of
the form
\begin{equation*}
  \sup_{\theta \in \Theta}
  \left|\poploss_{P_n}(\theta) - \poploss_P(\theta)\right| \to 0
  ~~~
  \mbox{(in probability or otherwise)}
\end{equation*}
provide a frequent tool for
demonstrating the convergence of the empirical risk minimizer $\what{\theta}_n
= \argmin_{\theta \in \Theta} \poploss_{P_n}(\theta)$.
As we mention in the introduction, they are sometimes equivalent to the
convergence $\minimax_n \to 0$ of the minimax risk~\cite{AlonBeCeHa97,
  Vapnik98, AnthonyBa99, ShalevShSrSr10}.

Condition~\ref{cond:compact-case} shows that even when $\Theta$ is compact,
these are unecessary for estimation, and we can even have $\sup_{\theta \in
  \Theta} |\poploss_{P_n}(\theta) - \poploss_P(\theta)| = \infty$ with
probability 1 while $\poploss_P(\what{\theta}_n) \to \poploss_P\opt$.
Indeed, in Example~\ref{example:log-loss}, let $p = P(Z = 1)$.
Then whenever $\what{p} = \frac{1}{n} \sum_{i = 1}^n \indic{Z_i = 1}
\neq p$,
\begin{equation*}
  \sup_{\theta \in [0, 1]}
  |\poploss_{P_n}(\theta) - \poploss_P(\theta)|
  =
  \sup_{\theta \in [0, 1]}
  \left|(\what{p}_n - p) \log \frac{\theta}{1 - \theta}
  \right|
  = \infty.
\end{equation*}
If $p$ is irrational this occurs at all $n$, while
Theorem~\ref{theorem:minimizable} shows that a restricted empirical risk
minimizer $\what{\theta}_n \in \argmin_{\theta \in \Theta_n}
\poploss_{P_n}(\theta)$, where the sets $\Theta_n \uparrow \Theta$ are
particularly chosen, achieves $\poploss_P(\what{\theta}_n) - \poploss_P\opt
= O_P(1 / n^\gamma)$ for any $\gamma < \half$.
In the unbounded case, Example~\ref{example:exp-loss} makes clear that no
form of global Lipschitz continuity (and similarly, no uniform convergence)
is necessary for convergence of an estimator.
Nonetheless, it would be interesting to understand whether the existence of
stable estimators (say, that change little upon substituting a single
observation $Z_i$) is necessary and sufficient for $\loss$ to be
minimizable, as is the case for bounded functions in machine learning
problems~\cite{ShalevShSrSr10}.
It also remains unclear whether Condition~\ref{cond:compact-case} is
equivalent to the consistency of particular types of regularization.

Section~\ref{sec:stationary-points} implicitly raises the question of
whether our choice of minimax risk~\eqref{eqn:minimax-opt} via the excess
loss $\poploss_P(\theta) - \poploss_P\opt$ is the right one.
In some problems, stationary points may be more
interesting (and achievable) than loss minimizers.
In another vein, our characterizations are loss specific but do not address
questions of adapting to the underlying distribution $P$
(cf.~\cite{ChatterjeeDuLaZh16}): are instance-specific
guarantees, i.e., depending on $\loss$, $P$, and $\Theta$, possible?
Relatedly, if instead of the minimax risk~\eqref{eqn:minimax-opt}
we consider a game where we choose an estimator $\what{\theta}_n$
for each $n$, then nature chooses $P$ and we observe $Z_i \simiid P$,
we ask whether for some $\gamma \ge 0$ the risk measure
\begin{equation}
  \label{eqn:super-efficiency-maybe}
  \inf_{\what{\theta}}
  \sup_P \limsup_n n^\gamma \cdot
  \E_{P^n}\left[\poploss_P(\what{\theta}_n(Z_1^n))
    - \poploss_P\opt\right]
\end{equation}
converges.
Super-efficiency theory~\cite{VanDerVaart97} shows that
estimators may improve upon
classical statistical efficiency bounds only
at a measure-zero set of parameters
to provide insights into the limit~\eqref{eqn:super-efficiency-maybe}
in these cases;
extending it to general loss minimization problems would be a major
achievement.

We could instead ask for
convergence of $\what{\theta}_n$ to the solution
set $\Theta\opt(P) \defeq \argmin_{\theta \in \Theta} \poploss_P(\theta)$.
When $\Theta$ is compact, the
empirical risk minimizer $\what{\theta}_n$ satisfies $\dist(\what{\theta}_n,
\Theta\opt(P)) \cas 0$ for any $P$, but this convergence is
pointwise~\cite[Thm.~5.4]{ShapiroDeRu14}.
Even for Lipschitz functions on a compact set,
uniform guarantees are impossible:
consider minimizing $\loss_z(\theta) = |\theta - z|$,
and take the two distributions $P_0 = \frac{1 + \delta}{2}
\pointmass_0 + \frac{1 - \delta}{2} \pointmass_1$
and $P_1 = \frac{1 - \delta}{2} \pointmass_0 +
\frac{1 + \delta}{2} \pointmass_1$.
Then because
$\theta_0 = \argmin \poploss_{P_0} = 0$ and
$\theta_1 = \argmin \poploss_{P_1} = 1$,
an essentially standard minimax calculation~\cite[Ch.~14]{Wainwright19}
gives
\begin{equation*}
  \inf_{\what{\theta}_n}
  \max_{i \in \{0, 1\}}
  \E_{P_i^n} \left[\left|\what{\theta}_n(Z_1^n)
    - \theta_i\right|\right]
  \ge \half \left(1 - \tvnorm{P_0^n - P_1^n}\right)
  \ge \half \left(1 - \sqrt{\delta^2 (1 + o_\delta(1)) n}\right),
\end{equation*}
where the final inequality uses Pinsker's inequality
and $\dkl{P_0^n}{P_1^n} = n \dkl{P_0}{P_1}$
while $\dkl{P_0}{P_1} = \delta \log \frac{1 + \delta}{1 - \delta}
= 2 \delta^2 (1 + o(1))$ as $\delta \to 0$.
Take $\delta \ll 1/n$ to obtain
that the minimax parameter estimation
risk is as bad as predicting $\what{\theta} = 0$:
$\max_{i \in \{0, 1\}} \E_{P_i}[|\what{\theta}_n - \theta_i|]
\ge \half - o(1)$.
Any characterization of estimable parameters in a distribution free sense
requires care, as certainly Condition~\ref{cond:compact-case} is
insufficient.

We identify one more important open question.
First, we include Section~\ref{sec:stationary-points} only to contrast the
(in)achievability results present in the rest of the paper.
Providing a characterization of the losses for which
it is possible to attain stationary points in a
distribution free sense remains a challenging.
Certainly, uniform Lipschitz continuity of the losses (as
Proposition~\ref{proposition:stationary-lipschitz} assumes) is
unnecessary: if $\loss_z(\cdot)$
has Lipschitz derivatives on compact subsets of $\Theta$,
then any estimator with
$\poploss_P(\what{\theta}_n) - \poploss_P\opt \to 0$ guarantees
that $\poploss_P'(\what{\theta}_n) \to 0$.
Indeed, if $\poploss_P(\theta) - \poploss_P\opt \le \epsilon$, then
because a function with $\lipconst$-Lipschitz derivative satisfies
$|\poploss_P(\theta + t) - \poploss_P(\theta)
- \poploss_P'(\theta) t| \le \lipconst t^2 / 2$,
%% a quick calculation shows that
%% (so long as $\poploss_P$ has $\lipconst$
%% Lipschitz derivative in
%% $[\theta -|\poploss_P'(\theta)| / \lipconst,
%%   \theta + |\poploss_P'(\theta)| / \lipconst]$), then
taking $t = -\poploss_P'(\theta) / M$ yields
\begin{equation*}
  \poploss_P(\theta + t) \le \poploss_P(\theta)
  - \frac{\poploss'(\theta)^2}{2 \lipconst}.
\end{equation*}
That is, necessarily $\poploss'(\theta) \le \sqrt{2 \lipconst \epsilon}$.
So, for example, the exponential loss in Example~\ref{example:exp-loss}
admits estimators satisfying $\poploss_P'(\what{\theta}_n) \to 0$,
though it is certainly not Lipschitz.
We leave such extensions to future work.

\bibliography{bib}
\bibliographystyle{abbrvnat}

\appendix

% -*- mode: latex -*- %

\section{Properties of the set of achievable minimizers}
\label{sec:achievable-minimizers}

With the definitions of directional derivatives and associated calculus
rules in Section~\ref{sec:convex-analysis}, we can provide more perspective
on the achievable set~\eqref{eqn:achievable-minimizers}.

\subsection{The one-dimensional achievable set}

The cleanest results follow in the one-dimensional case, as the achievable
set~\eqref{eqn:one-dim-achievable-minimizers} admits many different
characterizations.
To do so, we first argue that so long as $\Theta$ has an interior,
the extreme points~\eqref{eqn:extreme-minimizers} satisfy
\begin{equation}
  \label{eqn:alt-extreme-minimizers}
  \thetamin
  % = \thetamin(\loss, \mc{Z}, \Theta)
  = \inf\left\{\theta \in \Theta
  \mid \sup_{z \in \mc{Z}} \deriv_+ \loss_z(\theta) > 0
  \right\}
  ~~ \mbox{and} ~~
  \thetamax
  % = \thetamax(\loss, \mc{Z}, \Theta) =
  = \sup\left\{\theta \in \Theta
    \mid \inf_{z \in \mc{Z}}
    \deriv_- \loss_z(\theta) < 0
    \right\}.
\end{equation}
We show the left equality, as the right is similar.
Let $\theta \in \interior \Theta$ be any point for which
$\sup_{z \in \mc{Z}} \deriv_+ \loss_z(\theta) > 0$.
Then there exists $z \in \mc{Z}$ for which $\deriv_+ \loss_z(\theta)
> 0$, and because $\loss_z(\cdot)$ is a.e.\ differentiable on its
domain, for all $\epsilon > 0$ there exists $\theta' \in [\theta,
  \theta + \epsilon]$ for which $\loss_z'(\theta') \ge \deriv_+
\loss_z(\theta)$ by Lemmas~\ref{lemma:properties-of-derivatives}
and~\ref{lemma:integrability-convex},
so the claimed equality holds if
there exists $\theta \in \interior \Theta$ satisfying
$\deriv_+ \loss_z(\theta) > 0$ for some $z \in \mc{Z}$.
If, on the other hand, $\sup_{z \in \mc{Z}}\deriv_+ \loss_z(\theta)
\le 0$ for all $\theta \in \interior \Theta$, then the monotonicity of
$\deriv_+$ and $\loss'$ show that $\thetamin = \sup\{\theta \in
\Theta\}$ as desired.
We therefore use the characterization~\eqref{eqn:alt-extreme-minimizers}
for $\thetamin$ and $\thetamax$.

While the definition~\eqref{eqn:achievable-minimizers} of
$\achievable$ appears complex, the next two lemmas show how it
captures the sets of plausible minimizers of $\poploss_P(\theta)$ in
ways that depend only on the loss $\loss$ and potential data $\mc{Z}$,
irrespective of $P$.
In the lemmas, we recall that $\mc{P}_\loss$ denotes the
set~\eqref{eqn:well-defined} of Borel probabilities for which
$\poploss_P$ is well defined on $\theta \in \Theta$.
All finitely supported measures $P$ immediately belong to $\mc{P}_\loss$,
and moreover, minimizers of $\poploss_P$ in $\Theta$ are attained.
If $\Theta$ is compact, this is standard.
Otherwise, the extended reals address the issue, so that (e.g.) if $\sup
\Theta = +\infty$ and $\lim_{\theta \to \infty} \poploss(\theta) =
\inf_{\theta \in \Theta} \poploss(\theta)$, we say $\infty \in
\argmin_{\theta \in \Theta} \poploss(\theta)$.

\begin{lemma}
  \label{lemma:achievable-minimizers}
  Simultaneously for each probability distribution
  $P \in \mc{P}_\loss$, that is, for which $\poploss_P$ is well-defined
  on $\Theta$,
  the following hold:
  \begin{enumerate}[(i)]
  \item \label{item:singleton-achievable}
    If $\thetamin \ge \thetamax$, then
    $\thetamax \le \theta\opt \le \thetamin$ implies
    that $\theta\opt$ minimizes $\poploss_P(\theta)$ over $\theta \in \Theta$.
    In particular,
    all functions $\loss_z(\cdot)$ are constant and minimized
    on $[\thetamax, \thetamin]$.
%%   \item \label{item:singleton-achievable}
%%     If $\achievable = \{\theta\opt\}$, then $\theta\opt$ minimizes
%%     $\poploss_P(\theta)$ over $\theta \in \Theta$.
  \end{enumerate}
  If $\achievable$ is non-empty, we additionally have
  \begin{enumerate}[(i)]
    \setcounter{enumi}{1}
  \item \label{item:minimizer-intersection}
    $\achievable$ intersects the
    minimizers of $\poploss_P(\theta)$, that is, $\argmin_{\theta \in
      \Theta} \E_P[\loss_Z(\theta)] \cap \achievable \neq \emptyset$.
  \item \label{item:project-back}
    For any point $\theta_0 \in \Theta$, the projection
    $\proj_{\achievable}(\theta_0) = \argmin_{\theta \in \achievable}\{(\theta
    - \theta_0)^2\}$ satisfies $\poploss_P(\proj_{\achievable}(\theta_0)) \le
    \poploss_P(\theta_0)$.
  \item \label{item:old-definition-of-achievable}
    If $\achievable$ is non-singleton, then
    \begin{equation}
      \achievable
      = \cl \left\{\theta \in \Theta \mid
      ~ \mbox{there~exist}~ z_0, z_1 \in \mc{Z}
      ~ \mbox{with}~
      \begin{array}{ll}
        \partial \loss_{z_0}(\theta) \cap \R_{> 0} \neq \emptyset & \mbox{and} \\
        \partial \loss_{z_1}(\theta) \cap \R_{< 0} \neq \emptyset
      \end{array}
      \right\}.
      \label{eqn:old-achievable-minimizers}
    \end{equation}
  \end{enumerate}
\end{lemma}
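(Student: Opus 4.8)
The plan is to reduce the whole lemma to a single $P$-free monotonicity fact about the individual losses; once that fact is in hand, parts (i)--(iii) follow by integrating against $P$ and (iv) follows by elementary set algebra. The fact I would isolate first, as a sub-lemma, is: \emph{for every $z \in \mc{Z}$, the function $\loss(\cdot, z)$ is non-increasing on $\Theta \cap (-\infty, \thetamin]$ and non-decreasing on $\Theta \cap [\thetamax, \infty)$.}

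To prove the sub-lemma I would work from the characterization~\eqref{eqn:alt-extreme-minimizers}. By Lemma~\ref{lemma:properties-of-derivatives}\eqref{item:monotonicity-subdifferential} the maps $\theta \mapsto \deriv_+\loss(\theta, z)$ and $\theta \mapsto \deriv_-\loss(\theta, z)$ are non-decreasing, hence so are $\theta \mapsto \sup_{z}\deriv_+\loss(\theta, z)$ and $\theta \mapsto \inf_{z}\deriv_-\loss(\theta, z)$. Consequently the set $\{\theta \in \Theta : \sup_z\deriv_+\loss(\theta, z) > 0\}$ is upward closed with infimum $\thetamin$, and $\{\theta \in \Theta : \inf_z\deriv_-\loss(\theta, z) < 0\}$ is downward closed with supremum $\thetamax$; so $\deriv_+\loss(\theta, z) \le 0$ for all $z$ whenever $\theta < \thetamin$, and $\deriv_-\loss(\theta, z) \ge 0$ for all $z$ whenever $\theta > \thetamax$. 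For $\theta_1 < \theta_2 \le \thetamin$ (both interior to $\Theta$), the standard slope inequality for closed convex functions $\frac{\loss(\theta_2, z) - \loss(\theta_1, z)}{\theta_2 - \theta_1} \le \deriv_-\loss(\theta_2, z)$, combined with $\deriv_-\loss(\theta_2, z) = \lim_{\theta\uparrow\theta_2}\deriv_+\loss(\theta, z) \le 0$ (using Lemma~\ref{lemma:properties-of-derivatives}\eqref{item:continuity-directionals} to cover $\theta_2 = \thetamin$), gives $\loss(\theta_2, z) \le \loss(\theta_1, z)$; the statement on $[\thetamax, \infty)$ is symmetric, and the two endpoints of $\Theta$ are trivial since $\loss(\cdot,z)$ may equal $+\infty$ there.

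Given the sub-lemma, every $\poploss_P$ with $P \in \mc{P}_\loss$ inherits the same monotonicity on $\Theta \cap (-\infty,\thetamin]$ and $\Theta\cap[\thetamax,\infty)$, since a pointwise-in-$z$ inequality between losses passes to the integrals with no limit exchange, and parts (i)--(iii) are then immediate. For (iii): if $\theta_0 < \thetamin$ then $\proj_{\achievable}(\theta_0) = \thetamin$ and $\poploss_P(\thetamin) \le \poploss_P(\theta_0)$; the case $\theta_0 > \thetamax$ is symmetric, and $\theta_0 \in \achievable$ is trivial. Part (ii) is (iii) applied to a minimizer $\theta\opt$ of $\poploss_P$ over $\Theta$, which is attained (possibly at $\pm\infty$) for $P \in \mc{P}_\loss$ as noted just before the lemma, the two infinite cases being handled by the monotonicity of $\poploss_P$ past $\thetamax$ and before $\thetamin$. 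For (i), when $\thetamax \le \thetamin$ the intervals $\Theta\cap(-\infty,\thetamin]$ and $\Theta\cap[\thetamax,\infty)$ overlap exactly on $[\thetamax, \thetamin]$, so each $\loss(\cdot, z)$ is there simultaneously non-increasing and non-decreasing, hence constant, and minimal over all of $\Theta$ by the monotonicities outside; integrating gives the same conclusion for $\poploss_P$.

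For (iv), assume $\thetamin < \thetamax$. Since $\deriv_+\loss(\theta, z) = \sup\partial\loss(\theta, z)$ and $\deriv_-\loss(\theta, z) = \inf\partial\loss(\theta, z)$ for every $\theta$ (Lemma~\ref{lemma:properties-of-derivatives}\eqref{item:subdifferential-as-directional} and the corollary following it at the endpoints of $\dom\loss(\cdot, z)$), the condition $\partial\loss(\theta, z)\cap\R_{>0}\neq\emptyset$ is equivalent to $\deriv_+\loss(\theta, z) > 0$ and $\partial\loss(\theta, z)\cap\R_{<0}\neq\emptyset$ to $\deriv_-\loss(\theta, z) < 0$. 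Hence the set $S$ inside the closure in~\eqref{eqn:old-achievable-minimizers} equals $\{\theta\in\Theta : \sup_z\deriv_+\loss(\theta, z) > 0\}\cap\{\theta\in\Theta : \inf_z\deriv_-\loss(\theta, z) < 0\}$, the intersection of the upward-closed set with infimum $\thetamin$ and the downward-closed set with supremum $\thetamax$ identified above. Therefore $(\thetamin, \thetamax)\subseteq S\subseteq[\thetamin,\thetamax]$, and taking closures (in the extended reals when $\Theta$ is unbounded) gives $\cl S = [\thetamin,\thetamax] = \achievable$. I expect the only real difficulty to be bookkeeping around degenerate cases rather than anything substantive: the ``empty defining set'' conventions for $\thetamin$ and $\thetamax$ in~\eqref{eqn:extreme-minimizers}, the two boundary points of $\Theta$ (where the signed Lipschitz constants are left unconstrained and $\loss(\cdot, z)$ need not be subdifferentiable), the endpoints of $\dom\loss(\cdot, z)$ where one appeals to the corollary instead of Lemma~\ref{lemma:properties-of-derivatives}\eqref{item:subdifferential-as-directional}, and the extended-real conventions for minimizers and closures when $\Theta$ is unbounded; proving the monotonicity sub-lemma once, with these distinctions handled there, keeps the deductions of (i)--(iv) clean.
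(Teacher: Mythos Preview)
Your proof is correct and follows essentially the same logical skeleton as the paper's, but with a different and somewhat cleaner organization. The paper first invokes the interchange of directional differentiation and integration (citing \cite{Bertsekas73} and \cite[Thm.~7.52]{ShapiroDeRu09}) to obtain $\deriv_{\pm}\poploss_P(\theta) = \int \deriv_{\pm}\loss(\theta,z)\,dP(z)$, and then argues monotonicity of $\poploss_P$ via the integral representation $\poploss_P(\thetamin) = \poploss_P(\theta) + \int_\theta^{\thetamin}\deriv_+\poploss_P(t)\,dt$ with a nonpositive integrand. You instead establish the pointwise-in-$z$ monotonicity of each $\loss(\cdot,z)$ on $(-\infty,\thetamin]$ and $[\thetamax,\infty)$ via the slope inequality, and only then integrate the resulting inequality against $P$; this sidesteps the interchange lemma entirely and keeps the argument self-contained within the convex-analysis preliminaries already laid out in Section~\ref{sec:convex-analysis}. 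For part~\eqref{item:old-definition-of-achievable} the two arguments coincide: the paper also shows $(\thetamin,\thetamax)\subset\Theta^*\subset[\thetamin,\thetamax]$ and takes closures, though it phrases the first inclusion by exhibiting $z_0,z_1$ directly rather than via your upward/downward-closed description. Your factoring through a single $P$-free sub-lemma is a nice touch that the paper does not make explicit.
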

\noindent
We prove the lemma in Appendix~\ref{sec:proof-achievable}.

\newcommand{\allargmins}{\mc{A}}

We can write $\achievable$ more directly in terms of minimizers of
$\poploss_P$.
Let $\allargmins(\loss, \mc{Z}, \theta)$ be the collection
of closed convex sets $C \subset [-\infty,\infty]$ for which for all
distributions $P$ on $\mc{Z}$,
\begin{equation*}
  C \cap \argmin_{\theta \in \Theta} \poploss_P(\theta) \neq \emptyset.
\end{equation*}
%% (To address the extended reals, we take $C = [a, b]$ for some
%% $-\infty \le a \le b \le \infty$, and recognize that
%% as $\poploss_P$ is a closed convex function~\cite{Bertsekas73},
%% its minimizers are attained as elements of $[-\infty, \infty]$
%% and so $\allargmins$ is non-empty.)
We then have the following result, whose proof we provide in
Appendix~\ref{sec:proof-achievable-as-all-argmins}, and which makes formal
that the definitions~\eqref{eqn:one-dim-achievable-minimizers}
and~\eqref{eqn:achievable-minimizers} are equivalent.
\begin{lemma}
  \label{lemma:achievable-as-all-argmins}
  Assume $\achievable$ is non-empty.
  Then viewed as a subset of the extended
  reals $[-\infty, \infty]$, it satisfies
  \begin{equation*}
    \achievable =
    \bigcap \left\{C \in \allargmins(\loss, \mc{Z}, \Theta)\right\}.
  \end{equation*}
\end{lemma}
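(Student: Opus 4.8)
The plan is to prove the two inclusions $\bigcap\{C\in\allargmins(\loss,\mc Z,\Theta)\}\subseteq\achievable$ and $\achievable\subseteq\bigcap\{C\in\allargmins(\loss,\mc Z,\Theta)\}$ separately; the first is essentially bookkeeping, and the second carries the content.

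\emph{The inclusion $\bigcap\{C\in\allargmins\}\subseteq\achievable$.} I would show $\achievable$ is itself a member of $\allargmins(\loss,\mc Z,\Theta)$. Inside $[-\infty,\infty]$ it is the closed interval $[\thetamin,\thetamax]$, hence closed and convex. Under Assumption~\ref{assumption:achievable} we have $\Theta=\achievable$, so $\argmin_{\theta\in\Theta}\poploss_P(\theta)\subseteq\achievable$ for every $P$; it therefore suffices that this $\argmin$ be non-empty for each distribution $P$ on $\mc Z$. For $P\in\mc P_\loss$ this is exactly Lemma~\ref{lemma:achievable-minimizers}\eqref{item:minimizer-intersection}, and for the remaining $P$ (those for which $\poploss_P$ is still well defined as an extended-real function) one either has $\poploss_P\equiv+\infty$ on $\Theta$, so $\argmin=\Theta$, or $\poploss_P(\theta_0)=-\infty$ for some $\theta_0\in\Theta$, so $\theta_0\in\argmin$. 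Hence $\achievable\in\allargmins(\loss,\mc Z,\Theta)$, which gives the inclusion.

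\emph{The inclusion $\achievable\subseteq\bigcap\{C\in\allargmins\}$, i.e.\ every $C\in\allargmins$ contains $\achievable$.} If $\achievable$ is a singleton then $\Theta=\achievable=\{\theta^*\}$, so $\argmin_{\theta\in\Theta}\poploss_P=\{\theta^*\}$ for all $P$ and any $C\in\allargmins$ contains $\theta^*$; assume therefore $\thetamin<\thetamax$, so $\interior\Theta=(\thetamin,\thetamax)\neq\emptyset$. Fix $C\in\allargmins$; it is a non-empty closed subinterval $[a,b]\subseteq[-\infty,\infty]$. I claim $a\le\thetamin$. If not, then $a>\thetamin$, and I pick $\theta_1\in(\thetamin,\min\{a,\thetamax\})\subseteq\interior\Theta$, which is non-empty because $\thetamin<a$ and $\thetamin<\thetamax$. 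By monotonicity of $\deriv_+$ (Lemma~\ref{lemma:properties-of-derivatives}\eqref{item:monotonicity-subdifferential}) the set $\{\theta\in\Theta\mid\sup_{z\in\mc Z}\deriv_+\loss(\theta,z)>0\}$ is upward closed, and by~\eqref{eqn:alt-extreme-minimizers} its infimum is $\thetamin$; since $\theta_1>\thetamin$ it lies in this set, so there is $z_0\in\mc Z$ with $\deriv_+\loss(\theta_1,z_0)>0$. Since $\deriv_+\loss(\theta_1,z_0)=\inf_{t>0}\frac{\loss(\theta_1+t,z_0)-\loss(\theta_1,z_0)}{t}>0$, we get $\loss(\theta,z_0)>\loss(\theta_1,z_0)$ for all $\theta>\theta_1$, so $\poploss_{\pointmass_{z_0}}(\theta)=\loss(\theta,z_0)$ has no minimizer over $\Theta$ to the right of $\theta_1$, i.e.\ $\argmin_{\theta\in\Theta}\poploss_{\pointmass_{z_0}}(\theta)\subseteq[-\infty,\theta_1]$. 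As $\theta_1<a$, this set is disjoint from $C=[a,b]$, contradicting $C\in\allargmins$ applied to $P=\pointmass_{z_0}$. Hence $a\le\thetamin$. The mirror-image argument---using the right-hand characterization in~\eqref{eqn:alt-extreme-minimizers}, monotonicity of $\deriv_-$, and a point mass $\pointmass_{z_1}$ at a $z_1$ with $\deriv_-\loss(\theta_1,z_1)<0$ for some $\theta_1\in(\max\{b,\thetamin\},\thetamax)$, which forces $\argmin_{\theta\in\Theta}\poploss_{\pointmass_{z_1}}(\theta)\subseteq[\theta_1,+\infty]$---gives $b\ge\thetamax$. Thus $C\supseteq[\thetamin,\thetamax]=\achievable$, and since $C\in\allargmins$ was arbitrary, $\achievable\subseteq\bigcap\{C\in\allargmins\}$.

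I expect the only real friction to be bookkeeping: handling the cases where $\thetamin$ or $\thetamax$ is infinite (so the intervals above are read in $[-\infty,\infty]$) and the handful of degenerate distributions $P$ for which $\poploss_P$ takes value $\pm\infty$ or is ill defined---none of which affects the argument, because $\argmin$ is always taken over $\Theta=\achievable$ and minimizers are attained in the extended reals. Conceptually the proof is just the two one-sided single-point-mass constructions together with the characterization~\eqref{eqn:alt-extreme-minimizers} of $\thetamin,\thetamax$; one could alternatively route the reverse inclusion through Lemma~\ref{lemma:achievable-minimizers}\eqref{item:old-definition-of-achievable}, but the direct construction is the shortest path.
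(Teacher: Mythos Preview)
Your proof is correct, and for the harder inclusion it is actually cleaner than the paper's. Both you and the paper handle $\bigcap\{C\in\allargmins\}\subseteq\achievable$ the same way in substance---showing that $\achievable$ (or, in the paper's version, the half-line $[\thetamin,\infty]$) belongs to $\allargmins$ via Lemma~\ref{lemma:achievable-minimizers}\eqref{item:minimizer-intersection}; your invocation of Assumption~\ref{assumption:achievable} is an unnecessary detour, since that lemma already gives $\achievable\cap\argmin\poploss_P\neq\emptyset$ without it, but it does no harm. For the reverse inclusion $\achievable\subseteq C$ the two arguments genuinely diverge. The paper fixes $\theta\in\interior\achievable$, builds a \emph{two-point} mixture $P=p\,\pointmass_{z^+}+(1-p)\,\pointmass_{z^-}$ balanced so that $\theta\in\argmin\poploss_P$, and then asserts $\theta\in C$; as written this step has a small gap, because $C\cap\argmin\poploss_P\neq\emptyset$ does not by itself force $\theta\in C$ unless the argmin is a singleton or one supplements with a limiting argument. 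Your approach sidesteps this entirely: you use a \emph{single} point mass $\pointmass_{z_0}$ to pin $\argmin\poploss_P$ wholly to one side of a point $\theta_1$ just inside $\achievable$, which directly forces the endpoints of the interval $C=[a,b]$ to satisfy $a\le\thetamin$ and $b\ge\thetamax$. This is exactly what the paper's construction would collapse to once patched (send $p\to 1$ or $p\to 0$ in its mixture), so your version is the more direct route.
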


That is, $\achievable$ is the smallest closed convex set $C$ satisfying $C
\cap \argmin_{\theta \in \Theta}\poploss_P(\theta) \neq \emptyset$ for all
$P$.  In summary, Lemmas~\ref{lemma:achievable-minimizers}
and~\ref{lemma:achievable-as-all-argmins} show that it is
no loss of generality to restrict attention to the
set~\eqref{eqn:achievable-minimizers} of achievable minimizers: completely
independently of the distribution $P$ on $\mc{Z}$, we can \emph{always}
improve the expected loss $\poploss_P$ by projecting a point onto
$\achievable$.

\subsection{The multi-dimensional achievable set}
\label{sec:multi-dim-achievable}

When $\Theta \subset \R^d$ for $d > 1$, the characterizations
of the achievable set~\eqref{eqn:achievable-minimizers}
are not quite so elegant as those for the one-dimensional case,
so instead we simply record a few results and extensions
to the set.
As a trivial remark, note that if $\theta_0 \not \in \achievable$, then for
each finitely supported distribution $Q$, there is $\theta \in \achievable$
for which $\poploss_Q(\theta) \le \poploss_Q(\theta_0)$, so in that sense,
there is no reason to include points in $\Theta \setminus \achievable$
in estimation.

As a preliminary result, we note that the condition that the halfspaces are
unconstraining~\eqref{eqn:halfspace-unconstraining} extends
to any probability measure so long as the loss
$\loss_z$ is appropriately integrable.
\begin{lemma}
  \label{lemma:discrete-or-borel}
  Let $\mc{P}_\loss$ contain the well-defined probability
  measures~\eqref{eqn:well-defined} and $\Theta$ be compact.
  Then equality~\eqref{eqn:halfspace-unconstraining} holds
  for all $P \in \mc{P}_\loss$.
\end{lemma}
\noindent
See Appendix~\ref{sec:proof-discrete-or-borel} for a proof.
In brief, we see that the restriction to finitely supported measures
is of no real import.

\newcommand{\eliminable}{E}

We also provide a few equivalent characterizations of achievability,
assuming the density of achievable points~\eqref{eqn:positive-derivative}.
For a vector $\theta_0$, define the set
of \emph{eliminatable points}
\begin{equation}
  \label{eqn:eliminable}
  \begin{split}
    E(\theta_0, \loss)
    & \defeq
    \left\{\theta \mid \deriv \loss_z(\theta_0; \theta - \theta_0)
    \ge 0 ~ \mbox{for~all~} z \in \mc{Z} \right\}\\
    & = \bigcap_{z \in \mc{Z}}
    \left\{\theta
    \mid \deriv \loss_z(\theta_0; \theta - \theta_0) \ge 0
    \right\},
  \end{split}
\end{equation}
%% \begin{equation}
%%   \begin{split}
%%     E(\theta_0, \loss)
%%     & \defeq \left\{
%%     \theta \mid \<\nabla \loss_z(\theta_0), \theta - \theta_0\>
%%     \ge 0 ~ \mbox{for~all~} z \in \mc{Z} \right\} \\
%%     & = \bigcap_{z \in \mc{Z}}
%%     \left\{\theta \mid \<\nabla \loss_z(\theta_0), \theta - \theta_0\>
%%     \right\},
%%   \end{split}
%% \end{equation}
which is a convex cone of points that necessarily
have loss at least that of $\theta_0$, as
\begin{equation*}
  \loss_z(\theta) \ge \loss_z(\theta_0) + \deriv
  \loss_z(\theta_0; \theta - \theta_0).
  %% \<\nabla \loss_z(\theta_0),
  %% \theta - \theta_0\>.
\end{equation*}
If a point $\theta \in E(\theta_0, \loss)$ for some $\theta_0 \neq \theta$,
then \emph{a priori} $\theta$ is sub-optimal (relative to $\theta_0$) for
any probability distribution $P$ on $\mc{Z}$, and $\poploss_P(\theta_0) \le
\poploss_P(\theta)$.
In fact, that a point $\theta_0$ eliminates no points in $\Theta$ is
equivalent to its achievability (see
Appendix~\ref{sec:proof-positive-derivative-eliminable}):
\begin{lemma}
  \label{lemma:positive-derivative-eliminable}
  If $\Theta \cap E(\theta_0, \loss) = \{\theta_0\}$ for
  some $\theta_0 \in \interior \Theta$, then
  inequality~\eqref{eqn:positive-derivative} holds at
  $\theta_0$.
  Conversely, if inequality~\eqref{eqn:positive-derivative} holds at
  $\theta \in \interior \Theta$, then
  $\theta \not \in E(\theta_0, \loss)$ for all $\theta_0 \in \interior \Theta$
  with $\theta_0 \neq \theta$.  
  % Assumption~\ref{assumption:achievable} holds.
\end{lemma}
\noindent
As a consequence, if each $\theta \in \interior \Theta$ is
admissible~\eqref{eqn:positive-derivative}, then no points may be summarily
eliminated as sub-optimal from $\Theta$,
and $\achievable = \Theta$.

\newcommand{\dominating}{\mathsf{domin}_\loss}

An alternative to the characterizations above is to define
the set
\begin{equation*}
  \dominating(\theta_0) \defeq \left\{
  \theta \in \Theta \mid \loss_z(\theta) \le \loss_z(\theta_0)
  ~ \mbox{for~all~} z \in \mc{Z} \right\}
  = \bigcap_{z \in \mc{Z}}
  \left\{\theta \in \Theta \mid \loss_z(\theta) \le \loss_z(\theta_0)
  \right\}
\end{equation*}
of points that uniformly dominate $\theta_0$, meaning that
$\theta \in \dominating(\theta_0)$ achieves smaller loss on \emph{all}
examples $z \in \mc{Z}$.
The set $\dominating(\theta_0)$ is closed convex,
as it is the intersection of closed convex sets.
When the only point $\theta$ that uniformly dominates
$\theta_0$ is always $\theta_0$ itself, then
it turns out that all points in the interior of $\Theta$
are admissible~\eqref{eqn:positive-derivative}:
\begin{lemma}
  \label{lemma:dominating-and-achievable}
  Assume that $\dominating(\theta_0) = \{\theta_0\}$ for each $\theta_0 \in
  \interior \Theta$.
  Then for each $\theta \in \interior \Theta$
  and $v \in \sphere^{d-1}$, there exists $z \in \mc{Z}$ such that
  \begin{equation*}
    \deriv \loss_z(\theta; v)
    \ge \deriv_- \loss_z(\theta; v) > 0.
  \end{equation*}
  Conversely, if $\theta_0 \in \interior \Theta$ is
  achievable~\eqref{eqn:positive-derivative}, then $\dominating(\theta_0) =
  \{\theta_0\}$.
\end{lemma}
\noindent
See Appendix~\ref{sec:proof-dominating-and-achievable} for a proof.

Recalling Example~\ref{example:glms}, we see what we view as the
prototypical case:
the set of admissible points~\eqref{eqn:positive-derivative}
is dense in $\interior \Theta$.
When this occurs, the somewhat complicated seeming achievable
set~\eqref{eqn:achievable-minimizers} coincides with the closure of any of
the following sets:
\begin{enumerate}[(i)]
\item the admissible points
  $\theta \in \Theta$, i.e., those
  satisfying inequality~\eqref{eqn:achievable-minimizers}.
\item the points $\theta$ that are not
  eliminatable~\eqref{eqn:eliminable}, that is, $\theta \not \in E(\theta_0,
  \loss)$ for any $\theta_0 \in \interior \Theta$.
\item the set of un-dominated points $\theta \in \Theta$,
  i.e., those for which $\dominating(\theta) = \{\theta\}$.
\end{enumerate}

\section{Technical proofs}

\subsection{Proof of Lemma~\ref{lemma:positive-derivatives-directable}}
\label{sec:proof-positive-derivatives-directable}

We first perform a compactness argument when $\theta \in \interior \Theta$
that refines the positivity condition~\eqref{eqn:positive-derivative}.
Because $\partial \loss_z(\theta)$ is compact for $\theta \in \interior
\Theta$, $v \mapsto \deriv \loss_z(\theta; v)$ is convex and bounded on
compacts, and so is continuous.
Thus, for each $v$, for the $z$ satisfying~\eqref{eqn:positive-derivative},
there is $\epsilon = \epsilon(v) > 0$ such that
\begin{equation*}
  \inf_{\ltwo{u - v} < \epsilon,
    \ltwo{u} = 1}
  \deriv \loss_z(\theta; u) > 0.
\end{equation*}
The sets $\{v + \epsilon(v) \interior \ball\} \cap \sphere^{d-1}$ form
an open cover of $\sphere^{d-1}$, from
which we can abstract a finite subcover,
and indexing by $v_1, \ldots, v_K$ and the associated
$z_1, \ldots, z_K$, we obtain
\begin{equation*}
  \inf_{v \in \sphere^{d-1}}
  \max_{i \le K} \deriv \loss_{z_i}(\theta; v)
  = \min_{j \le K}
  \inf_{\ltwo{u - v_j} < \epsilon}
  \max_{i \le K} \deriv \loss_{z_i}(\theta; u)
  \ge \min_{j \le K}
  \max_{i \le K}
  \inf_{\ltwo{u - v_j} < \epsilon} \deriv \loss_{z_i}(\theta; u)
  > 0.
\end{equation*}
Written differently, we have the following
strengthening of condition~\eqref{eqn:positive-derivative}.
\begin{lemma}
  \label{lemma:strong-positive-derivative}
  Let $\theta \in \interior \Theta$
  satisfy inequality~\eqref{eqn:positive-derivative}.
  Then there exists a finite $k \in \N$, set $\{z_1, \ldots, z_k\}
  \subset \mc{Z}$, and $c = c(\theta) > 0$ such that for each $v \in
  \sphere^{d-1}$, there is $z_i$ for which
  \begin{equation}
    \label{eqn:strong-positive-derivative}
    \deriv \loss_{z_i} (\theta; v) \ge c.
  \end{equation}
\end{lemma}

The next result provides a theorem of alternatives,
in the vein of Farkas' lemma.
\begin{lemma}
  \label{lemma:bilinear-alternatives}
  Let $A \in \R^{m \times n}$ be a matrix with
  rows $a_1^T, \ldots, a_n^T$ and $u \in \R^n$ a
  vector. Then precisely one of the following two alternatives holds:
  \begin{enumerate}[(a)]
  \item \label{item:solve-convex-system}
    There exists
    $\lambda \in \R^m_+$, $\<\lambda, \ones\> = 1$ for which
    $A^T \lambda = u$
  \item \label{item:strict-system-inequality}
    There exists a vector
    $v$ for which $\<a_i - u, v\> > 0$ for each $i$.
  \end{enumerate}
\end{lemma}
\begin{proof}
  By homogeneity, part~\eqref{item:strict-system-inequality} is equivalent to
  there existing $v$, $\ltwo{v} \le 1$, for which
  $\<a_i - u, v\> > 0$.
  Let $\Lambda = \{\lambda \in \R^m_+, \<\lambda, \ones\> = 1\}$.
  %% and
  %% define the bilinear function
  %% \begin{equation*}
  %%   F(\lambda, v)
  %%   \defeq \lambda^T A v.
  %% \end{equation*}
  Then
  \begin{equation*}
    \sup_{v \in \ball_2^n} \inf_{\lambda \in \Lambda}
    v^T(A^T \lambda - u) = \inf_{\lambda \in \Lambda} \sup_{v \in \ball_2^n}
    v^T (A^T \lambda - u)
  \end{equation*}
  by standard min-max duality with bilinear functions.
  So if condition~\eqref{item:solve-convex-system} holds for some
  $\lambda\subopt$, then obviously $\inf_{\lambda \in \Lambda} \sup_{v
    \in \ball_2} v^T (A^T \lambda - u) \le \sup_{v \in \ball_2} v^T
  (A^T \lambda\subopt - u) = 0$, while if
  condition~\eqref{item:strict-system-inequality} held, we would have
  $\inf_{\lambda \in \Lambda} v^T (A^T \lambda - u) = \min_i \<a_i -
  u, v\> > 0$, a contradiction.
  Conversely, if \eqref{item:solve-convex-system} fails, then
  $\sup_{v \in \ball_2^n} v^T (A^T \lambda - u)
  = \ltwo{A^T \lambda - u} > 0$, and
  as $\Lambda$ is compact,
  $\inf_{\lambda \in \Lambda} \ltwo{A^T \lambda - u} > 0$.
  The value of the game is thus positive, and
  so $\sup_{v \in \ball_2^n} \min_i \<a_i - u, v\> > 0$, i.e.,
  \eqref{item:strict-system-inequality} holds.
\end{proof}

Combining Lemma~\ref{lemma:strong-positive-derivative}
and Lemma~\ref{lemma:bilinear-alternatives}
will allow us to prove
Lemma~\ref{lemma:positive-derivatives-directable}.
Take the set $\{z_i\}_{i = 1}^k$ that
Lemma~\ref{lemma:strong-positive-derivative} guarantees,
so that there is a $c > 0$ such that
for each $v \in \sphere^{d-1}$ there exists $i$
and $g_i \in \partial \loss_{z_i}(\theta)$ satisfying
\begin{equation*}
  \<g_i, v\> \ge c.
\end{equation*}
We will show that for $0 \le t < c$, there is a solution to $\sum_{i = 1}^m
\lambda_i g_i = t u$ with $\lambda \succeq 0, \<\ones, \lambda\> = 1$, which
is evidently equivalent to Lemma~\ref{lemma:strong-positive-derivative}.
Take $t < c$, which in turn implies
\begin{equation*}
  \<g_i - t u, v\> \ge c - t > 0. % \ge \frac{c}{2}.
\end{equation*}
In particular, for any $v \in \sphere^{d-1}$, there
exist indices $i$ and $j$ for which
\begin{equation*}
  \<g_i - t u, v\> > 0 % \ge \frac{c}{2}
  ~~ \mbox{and} ~~
  \<g_j - t u, -v\> > 0 % \ge \frac{c}{2}
  ~~ \mbox{i.e.} ~~
  \<g_j - tu, v\> < 0. % \le -\frac{c}{2}.
\end{equation*}
That is, for any $v \in \sphere^{d-1}$,
\begin{equation*}
  \max_{i \le k} \<g_i - tu, v\> > 0 % \ge \frac{c}{2}
  ~~ \mbox{and} ~~
  \min_{i \le k} \<g_i - tu, v\> < 0. % \le -\frac{c}{2}.
\end{equation*}
Lemma~\ref{lemma:bilinear-alternatives} thus demonstrates that $\sum_{i =
  1}^m \lambda_i g_i = tu$ with $\lambda \succeq 0$, $\<\ones, \lambda\> =
1$ has a solution.

\subsection{Proof of Lemma~\ref{lemma:tv-and-hellinger}}
\label{sec:proof-tv-and-hellinger}

Recall the squared Hellinger distance
$\dhel^2(P, Q) = \half \int (\sqrt{dP} - \sqrt{dQ})^2$.
The Hellinger and variation distances satisfy
Le Cam's inequalities~\cite[Lemma 2.3]{Tsybakov09}
\begin{equation*}
  \dhel^2(P, Q) \le \tvnorm{P - Q}
  \le \dhel(P, Q) \sqrt{2 - \dhel^2(P, Q)}
\end{equation*}
for any $P, Q$, and the Hellinger distance tensorizes via $\dhel^2(P^n,
Q^n) = 1 - (1 - \dhel^2(P, Q))^n$.
For the first statement,
\begin{align*}
  \tvnorm{P_0^n - P_1^n}
  & \le \dhel(P_0^n, P_1^n)
  \sqrt{2 - \dhel^2(P_0^n, P_1^n)} \\
  & = \sqrt{1 - (1 - \dhel^2(P_0, P_1))^n}
  \sqrt{1 + (1 - \dhel^2(P_0, P_1))^n} \\
  & \le \sqrt{1 - (1 - \gamma)^n} \sqrt{1 + (1 - c)^n}
  = \sqrt{1 - (1 - \gamma)^{2n}}
\end{align*}
Taking $P = P_{0,n}$ and $Q = P_{1,n}$ and setting $c = \frac{a}{n}$
yields
\begin{equation*}
  \tvnorm{P_{0,n}^n - P_{1,n}^n}
  \le \sqrt{1 - \left(1 - \frac{a}{n}\right)^{2n}}.
  %%     \dhel^2(P_{0,n}^n, P_{1,n}^n)
  %%     = 1 - \left(1 - \dhel^2(P_{0,n}, P_{1,n})\right)^n
  %%     \le 1 - \left(1 - \tvnorm{P_{0,n} - P_{1,n}}\right)^n
  %%     \le 1 - \left(1 - \frac{a}{n}\right)^n.
\end{equation*}
Take $n \to \infty$.
%%   Thus $\limsup_n \dhel^2(P_{0,n}^n, P_{1,n}^n) \le 1 - e^{-a}$, and
%%   substituting into the preceding upper bound on $\tvnorm{P - Q}$ yields
%%   the lemma.

\subsection{Proof of Lemma~\ref{lemma:lipschitz-consts-monotone}}
\label{sec:proof-lipschitz-consts-monotone}

We prove the results about $\lipconst_\theta^-$.
We know that $\lipconst_\theta^- = \inf_{z \in \mc{Z}} \deriv_-
\loss_z(\theta)$ by Lemma~\ref{lemma:properties-of-derivatives},
part~\eqref{item:subdifferential-as-directional}.
We will demonstrate
that for all $t > 0$, $\lipconst_\theta^- \le \lipconst_{\theta + t}^-$.
If $\lipconst_{\theta + t}^- = -\infty$, then for any $\lipconst <
\infty$, there exists $z \in \mc{Z}$ such that $\deriv_- \loss_z(\theta)
\le - \lipconst$, and the monotonicity of $\theta \mapsto
\deriv_-\loss_z(\theta)$ (Lemma~\ref{lemma:properties-of-derivatives},
part~\eqref{item:continuity-directionals})
implies $\lipconst_\theta^- \le \deriv_- \loss_z(\theta) \le
-\lipconst$.
As $\lipconst$ was arbitrary, we have $\lipconst_\theta^- =
-\infty$.
If $\lipconst_{\theta + t}^- > -\infty$, then for any $\epsilon
> 0$, there exists $z$ such that $\deriv_- \loss_z(\theta + t) \le
\lipconst_{\theta + t}^- + \epsilon$.  For this $z$, $\lipconst_\theta^-
\le \deriv_- \loss_z(\theta) \le \deriv_- \loss_z(\theta + t) \le
\lipconst_{\theta + t}^- + \epsilon$, proving the monotonicity.
The monotonicity of $\lipconst_\theta^+$ is similar.

\subsection{Proof of Lemma~\ref{lemma:discrete-to-borel}}
\label{sec:proof-discrete-to-borel}

We first demonstrate that it is no loss of generality to assume
that the losses $\loss_z$ are nonnegative.
Because point masses $\pointmass_z \in \Pdiscrete(\mc{Z})$ trivially, we see
that Condition~\ref{cond:unbounded-case} implies that
$\loss_z\opt(\Theta) > -\infty$ for all $z \in \mc{Z}$, as
$\loss_z\opt(\Theta_0) > -\infty$ by propriety of the losses.
Now we show that we may reduce to the case that the losses
are non-negative.
Define the rescaled losses
$\wb{\loss}_z = \loss_z - \loss_z\opt(\Theta_0)$,
so that $\wb{\loss}_z \ge 0$ on $\Theta_0$,
$\wb{\loss}_z$ is still $z$-measurable, and
\begin{equation*}
  \wb{\loss}_z(\theta) \ge -\epsilon
  ~~ \mbox{for~all~} \theta \in \Theta, z \in \mc{Z}.
\end{equation*}
Condition~\ref{cond:unbounded-case} includes
Condition~\ref{cond:compact-case}, so for any $\theta_0 \in \interior
\Theta$, there is a Lipschitz constant $\lipconst_0$ such that $\lipconst_0
\ge \sup_{g,z}\{\ltwo{g} \mid g \in \partial \loss_z(\theta_0)\}$.
Thus, assuming w.l.o.g.\ that $\theta_0 \in \Theta_0$ by increasing the
compact set $\Theta_0$ if necessary, for any $g_0 \in \partial
\loss_z(\theta_0)$ we have the lower bound
\begin{equation*}
  \loss_z(\theta)
  \ge \loss_z(\theta_0)
  + \<g_0, \theta - \theta_0\>
  \ge \loss_z(\theta_0) - \lipconst_0 \ltwo{\theta - \theta_0}
\end{equation*}
so
\begin{equation*}
  \loss_z\opt(\Theta_0) \ge \loss_z(\theta_0) - \lipconst_0
  \cdot \diam(\Theta_0).
\end{equation*}
Then $-\infty < \E_P[\loss_Z\opt(\Theta_0)] < \infty$,
and so for any $\theta_0 \in \Theta_0$, $\theta \in \Theta$, and
any $P \in \mc{P}_\loss(\mc{Z})$,
\begin{align*}
  \poploss_P(\theta_0) - \poploss_P(\theta)
  & = \E_P\left[\loss_Z(\theta_0) - \loss_Z\opt(\Theta_0)\right]
  - \E_P\left[\loss_Z(\theta) - \loss_Z\opt(\Theta_0)\right] \\
  & = \E_P\left[\wb{\loss}_Z(\theta_0)\right]
  - \E_P\left[\wb{\loss}_Z(\theta)\right].
\end{align*}
Adding an additional $\epsilon > 0$, we see it is no loss of generality to
assume that the losses are nonnegative, $\loss_z \ge 0$.

%% Additionally, \citet[Thm.~5.4]{ShapiroDeRu14}
%% shows that
%% if $P \in \mc{P}_\loss(\mc{Z})$, then
%% \begin{equation*}
%%   \poploss_{P_n}\opt(\Theta_0) \cas \poploss_P\opt(\Theta_0).
%% \end{equation*}
%% Then
%% \begin{equation*}
%%   \poploss_{P_n}\opt(\Theta_0) - \epsilon
%%   \le \poploss_{P_n}\opt(\Theta)
%%   \le \poploss_{P_n}\opt(\Theta_0)
%%   \le \poploss_P\opt(\Theta_0) + \epsilon
%% \end{equation*}
%% \begin{equation*}
%%   \poploss_P\opt(\Theta)
%%   \le \poploss_P\opt(\Theta_0)
%%   \le \poploss_{P_n}\opt(\Theta_0) + \epsilon
%% \end{equation*}

\newcommand{\epito}{\stackrel{\textup{epi}}{\rightarrow}}

With the w.l.o.g.\ nonnegativity assumption, we
can show the extension to Borel measures from discrete.
Recall that functions $f_n$ epi-converge to $f$,
denoted $f_n \epito f$, if
whenever $\theta_n \to \theta$,
\begin{equation*}
  \liminf_n f_n(\theta_n) \ge f(\theta)
\end{equation*}
and for each $\theta$, there exists a sequence $\theta_n \to \theta$
such that
\begin{equation*}
  \limsup_{n \to \infty} f_n(\theta_n) \le f(\theta).
\end{equation*}
\citet[Thm.~7.49]{ShapiroDeRu14} shows that
$\poploss_{P_n} \epito
\poploss_P$ with probability 1 for all $P \in \mc{P}_\loss$,
and moreover, for any compact $\Theta_0$,
they also show~\cite[Thm.~5.4]{ShapiroDeRu14} that
$\poploss_{P_n}\opt(\Theta_0) \to \poploss_P\opt(\Theta_0)$
with probability 1.
%
%% That is, whenever $\theta_n \to \theta$,
%% \begin{equation*}
%%   \liminf_{n \to \infty} \poploss_{P_n}(\theta_n) \ge \poploss_P(\theta)
%% \end{equation*}
%% and for each $\theta$, there exists a sequence $\theta_n \to \theta$ such that
%% \begin{equation*}
%%   \limsup_{n \to \infty} \poploss_{P_n}(\theta_n) \le \poploss_P(\theta).
%% \end{equation*}
%
As Condition~\ref{cond:unbounded-case} holds,
let $\epsilon > 0$ be arbitrary and assume $\Theta_0$ satisfies
$\poploss_Q\opt(\Theta_0) \le \poploss_Q\opt(\Theta) + \epsilon$ for
all discrete $Q$.
Then taking $P_n$ to be the empirical distribution of $\{Z_i\}_{i=1}^n$
drawn i.i.d.\ $P$, which is discrete, we may assume that we have
\begin{equation*}
  \poploss_{P_n} \epito \poploss_P
  ~~ \mbox{and} ~~
  \poploss_{P_n}\opt(\Theta_0) \to
  \poploss_P\opt(\Theta_0)
\end{equation*}
for compact $\Theta_0$.
Because $\poploss_P$ is lower semicontinuous and bounded below
(by the w.l.o.g.\ assumption that $\loss_z \ge 0$),
there exists $\theta \in \Theta$ such that
$\poploss_P\opt(\Theta) \le \poploss_P(\theta) + \epsilon$.
For this $\theta$, epi-convergence implies
there exists a sequence $\theta_n \to \theta$,
$\theta_n \in \Theta$,
for which $\limsup_n \poploss_{P_n}(\theta_n) \le \poploss_P(\theta)$.
Thus
\begin{align*}
  \poploss_P\opt(\Theta_0)
  & = \lim_n \poploss_{P_n}\opt(\Theta_0)
  \stackrel{(i)}{\le} \limsup_n \poploss_{P_n}\opt(\Theta) + \epsilon
  \le \limsup_n \poploss_{P_n}(\theta_n) + \epsilon
  \stackrel{(ii)}{\le} \poploss_P(\theta) + \epsilon,
\end{align*}
where inequality~$(i)$ follows by Condition~\ref{cond:unbounded-case}
and inequality~$(ii)$ by
$\poploss_{P_n} \epito \poploss_P$.
By assumption, $\poploss_P(\theta) \le \poploss_P\opt(\Theta) + \epsilon$,
so
\begin{align*}
  \poploss_P\opt(\Theta_0) \le \poploss_P\opt(\Theta) + 2 \epsilon.
\end{align*}

\subsection{Proof of Lemma~\ref{lemma:achievable-minimizers}}
\label{sec:proof-achievable}

We recall a few results on differentiability and closedness
of stochastic optimization functionals~\cite{Bertsekas73, ShapiroDeRu14}.
When $P \in \mc{P}_\loss$, the probability distributions
making $\poploss_P$ well-defined over $\Theta$,
$\theta \mapsto \poploss_P(\theta)$ is closed convex whenever
$\loss_z(\cdot)$ is closed convex for each $z \in \mc{Z}$,
and moreover, we can interchange directional differentiation
and integration (see, e.g.,~\cite{Bertsekas73}
or \cite[Thm.~7.52]{ShapiroDeRu14}):
\begin{equation*}
  \deriv_+ \poploss_P(\theta)
  = \int \deriv_+ \loss_z(\theta) dP(z)
  ~~ \mbox{and} ~~
  \deriv_- \poploss_P(\theta)
  = \int \deriv_- \loss_z(\theta) dP(z).
\end{equation*}

Recalling the definition~\eqref{eqn:alt-extreme-minimizers} of
$\thetamin$ and $\thetamax$ in terms of directional derivatives, we
have
\begin{equation*}
  \thetamin = \inf\left\{\theta \in \Theta \mid
  \sup_{z \in \mc{Z}} \deriv_+ \loss_z(\theta) > 0
  \right\}
  ~~ \mbox{and} ~~
  \thetamax = \sup\left\{\theta \in \Theta \mid
  \inf_{z \in \mc{Z}} \deriv_- \loss_z(\theta) < 0
  \right\}.
\end{equation*}
Thus,
the swapping of integration and directional differentiation implies
\begin{align*}
  \deriv_+ \poploss_P(\thetamax)
  = \int \deriv_+ \loss_z(\thetamax) dP(z)
  \stackrel{(\star)}{=} \int \lim_{\theta \downarrow \thetamax}
  \underbrace{\deriv_+\loss_z(\theta)}_{\ge 0} dP(z)
  \ge 0,
\end{align*}
where equality~$(\star)$ follows from
Lemma~\ref{lemma:properties-of-derivatives}.
Similarly,
\begin{equation*}
  \deriv_- \poploss_P(\thetamin) \le 0.
\end{equation*}
If $\thetamax = \thetamin$, these equalities
imply that $\theta\opt = \thetamax = \thetamin$ minimizes
$\poploss_P$, and as the choice of $P$ was arbitrary,
this implies part~\eqref{item:singleton-achievable}
in that case.
For $\theta < \thetamin$, we have
$\sup_{z \in \mc{Z}} \deriv_+ \loss_z(\theta) \le 0$,
while $\theta > \thetamax$ implies
$\inf_{z \in \mc{Z}} \deriv_- \loss_z(\theta) \ge 0$.
In particular, $0 \le \deriv_- \loss_z(\theta) \le \deriv_+ \loss_z(\theta)
\le 0$ for all $z$, that is, $\loss_z'(\theta) = 0$ for any $\thetamax <
\theta < \thetamin$, completing the proof of
part~\eqref{item:singleton-achievable} once we recognize that
\begin{equation*}
  \deriv_+ \loss_z(\thetamax)
  \le 0 = \deriv_- \loss_z(\theta)
  = \deriv_+ \loss_z(\theta)
  \le \deriv_- \loss_z(\thetamin)
\end{equation*}
for $\thetamax < \theta < \thetamin$.

Now we consider the cases that $\thetamin \le \thetamax$.
Let $\Theta\opt(P) = \argmin_{\theta \in \Theta} \poploss_P(\theta)$;
because $\poploss_P$ is closed convex, its minimizers are attained (once we
allow extended reals).
We consider the left endpoint $\thetamin$, as the arguments for the
right endpoint are similar, and wish to show
that $[\thetamin, \infty] \cap \Theta\opt(P) \neq \emptyset$.
If $\thetamin = \inf\{\theta \in \Theta\}$, there is nothing to show,
so assume instead that $\thetamin > \inf\{\theta \in \Theta\}$,
whence $-\infty < \poploss_P(\thetamin) < \infty$.
Then for $\theta < \thetamin$,
assuming $\poploss_P(\theta) < \infty$,
we have $\sup_{z \in \mc{Z}} \deriv_+ \loss_z(\theta) \le 0$,
and applying Lemma~\ref{lemma:integrability-convex},
\begin{align*}
  \inf_{\theta \in \Theta}
  \poploss_P(\theta)
  \le \poploss_P(\thetamin)
  = \poploss_P(\theta)
  + \int_{\theta}^{\thetamin} \underbrace{\deriv_+ \poploss_P(t)}_{\le 0} dt
  \le \poploss_P(\theta).
\end{align*}
Performing the same argument for $\thetamax$, we see that
if $\theta \not \in [\thetamin, \thetamax]$, then
\begin{equation*}
  \poploss_P(\theta) \ge
  \inf_{\theta \in [\thetamin, \thetamax]} \poploss_P(\theta),
\end{equation*}
yielding parts~\eqref{item:minimizer-intersection}
and~\eqref{item:project-back} of the lemma.

Finally, we turn to the last
claim~\eqref{item:old-definition-of-achievable}.
Let
\begin{equation*}
  \Theta^*
  \defeq \left\{\theta \in \Theta
  \mid \inf_{z \in \mc{Z}}
  \deriv_- \loss_z(\theta) < 0,
  \sup_{z \in \mc{Z}}
  \deriv_+ \loss_z(\theta) > 0
  \right\}
\end{equation*}
denote the variant of the set on the right side of the
claimed equality~\eqref{eqn:old-achievable-minimizers}
whose closure we have not taken.
When $\thetamin < \thetamax$, for any $\theta \in (\thetamin, \thetamax)$
there exist $z_0, z_1$ such that
$\deriv_+ \loss_{z_0}(\theta) > 0$ and
$\deriv_- \loss_{z_1}(\theta) < 0$,
so $\interior \achievable \subset \Theta^*$.
For the converse, observe that because $\achievable$ has interior,
$\Theta^*$ is non-empty. It is also an interval (though it may be empty): if
$\theta_0 < \theta_1$ and for each $i \in \{0, 1\}$, there exists
$z_i^{\pm}$ satisfying $\deriv_- \loss_{z_i^-}(\theta_i) < 0$ and $\deriv_+
\loss_{z_i^+}(\theta_i) > 0$, then for $\theta_0 < \theta < \theta_1$, the
monotonicity of subgradients (Lemma~\ref{lemma:properties-of-derivatives},
part~\eqref{item:monotonicity-subdifferential}) implies that
\begin{equation*}
  \deriv_- \loss_{z_1^-}(\theta) < 0
  \le \deriv_- \loss_{z_1^-}(\theta_1) < 0
  ~~ \mbox{and} ~~
  \deriv_+ \loss_{z_0^+}(\theta)
  \ge \deriv_+ \loss_{z_0^+}(\theta_0) > 0,
\end{equation*}
so that $\theta \in \Theta^*$.
If $\theta \in \Theta^*$, then
clearly $\thetamin \le \theta \le \thetamax$ by construction,
implying $\cl \Theta^* \subset \achievable$.

\subsection{Proof of Lemma~\ref{lemma:achievable-as-all-argmins}}
\label{sec:proof-achievable-as-all-argmins}

Let $C\opt = \cap\{C \in \allargmins(\loss, \mc{Z}, \Theta)$
be the set on the right side of the equality
in Lemma~\ref{lemma:achievable-as-all-argmins}.
We first show that $\achievable \subset C\opt$.
Assume w.l.o.g.\ that $\achievable \neq \emptyset$ and
let $\theta \in \achievable$.
If $\achievable = \{\theta\}$, then
part~\eqref{item:singleton-achievable} of
Lemma~\ref{lemma:achievable-minimizers}
gives that $\theta \in C\opt$.
It is straightforward to see that if $\theta \in \interior \achievable$,
then $\theta \in C$ for each $C \in \allargmins(\loss, \mc{Z}, \Theta)$.
Indeed, by definition~\eqref{eqn:achievable-minimizers}, there exist $z^+$
and $z^- \in \mc{Z}$ such that $\deriv_+ \loss_{z^+}(\theta) > 0$ and
$\deriv_- \loss_{z^-}(\theta) < 0$.
Set $p$ to solve
$p \deriv_+ \loss_{z^+}(\theta) + (1 - p) \deriv_- \loss_{z^-}(\theta) = 0$,
and then define the distribution
$P = p \pointmass_{z^+} + (1 - p) \pointmass_{z^-}$.
Evidently, $\theta$ minimizes $\poploss_P$.
Because $C\opt$ is closed, it must therefore contain
$\cl \achievable$.

Suppose $\theta \not \in \achievable = [\thetamin, \thetamax]$,
and w.l.o.g.\ assume $\theta < \thetamax$.
Then by part~\eqref{item:minimizer-intersection}
of Lemma~\ref{lemma:achievable-minimizers},
$\sup \{\theta' \in \argmin \poploss_P\} \ge \thetamin > \theta$
for all distributions $P$.
The closed set $C = [\thetamin, \infty]$ thus belongs
to $\allargmins(\loss, \mc{Z}, \Theta)$, and
$\theta \not \in C$.
We have shown $\achievable^c \subset (C\opt)^c$,
that is, $C\opt \subset \achievable$ as desired.

\subsection{Proof of Lemma~\ref{lemma:discrete-or-borel}}
\label{sec:proof-discrete-or-borel}

If $\mc{P}_\loss$ contains well-defined probability measures, then standard
consistency results for stochastic
approximation~\cite[Thm.~5.4]{ShapiroDeRu14}
show that if $P_n$ denotes the empirical measure
for $Z_i \simiid P$, then
whenever $\sublevel_0(\poploss_P)
= \argmin_{\theta \in \Theta} \poploss_P(\theta)$ is
non-empty and bounded, then
\begin{equation*}
  \poploss_{P_n}\opt(\Theta) \cas \poploss_P\opt(\Theta)
  ~~ \mbox{and} ~~
  \poploss_{P_n}\opt(\Theta \cap H_{v,t})
  \cas \poploss_{P_n}\opt(\Theta \cap H_{v,t}).
\end{equation*}
In particular, if $H_{v,t}$ is unconstraining for all finitely
supported measures, then for all $\epsilon > 0$, there exists
a finitely supported measure $Q$ such that
\begin{equation*}
  \left|\poploss_P\opt(H_{v,t} \cap \Theta)
  - \poploss_Q\opt(H_{v, t} \cap \Theta)\right|
  \le \epsilon
  ~~ \mbox{and} ~~
  \left|\poploss_P\opt(\Theta) - \poploss_Q\opt(\Theta)\right|
  \le \epsilon.
\end{equation*}
In particular,
\begin{equation*}
  \left|\poploss_P\opt(H_{v,t} \cap \Theta) - \poploss_P\opt(\Theta)\right|
  \le 2 \epsilon
\end{equation*}
for all $\epsilon > 0$, that is,
$\poploss_P\opt(H_{v,t} \cap \Theta) = \poploss_P\opt(\Theta)$.

\subsection{Proof of Lemma~\ref{lemma:positive-derivative-eliminable}}
\label{sec:proof-positive-derivative-eliminable}

Let $E(\theta_0, \loss) \cap \Theta = \{\theta_0\}$, which
is equivalent to the
condition that
\begin{equation*}
  \Theta \subset \left(E(\theta_0, \loss) \setminus \{\theta_0\}\right)^c
  = \left\{\theta \mid \mbox{there~exists}~ z \in \mc{Z} ~ \mbox{s.t.}~
  \deriv \loss_z(\theta_0; \theta - \theta_0) < 0 \right\}.
\end{equation*}
%
%% Define the set of eliminatable points
%% \begin{equation*}
%%   E(\loss) \defeq \bigcup_{\theta_0 \in \Theta}
%%   E(\theta_0, \loss) \setminus \{\theta_0\},
%% \end{equation*}
%% so that Assumption~\ref{assumption:achievable} is equivalent
%% to the condition that $\interior \Theta \subset E(\loss)^c$,
%% that is,
%% \begin{equation*}
%%   \interior \Theta \subset E(\loss)^c
%%   = \bigcap_{\theta_0 \in \Theta}
%%   \left\{\theta \mid
%%   ~ \mbox{there~exists~}z \in \mc{Z}
%%   ~ \mbox{s.t.} ~
%%   \deriv \loss_z(\theta_0; \theta - \theta_0) < 0
%%   \right\}.
%% \end{equation*}
%
For each $\theta_1 \in \Theta$, there exists $z \in \mc{Z}$ for which
$\deriv \loss_z(\theta_0; \theta_1 - \theta_0) < 0$.
As a consequence, because for any $v \in \sphere^{d-1}$
we may take $\theta_1 = \theta_0 + t v \in
\Theta$ for some $t > 0$, there exists $z$ for which $\deriv
\loss_z(\theta_0; v)
= \frac{1}{t} \deriv \loss_z(\theta_0; \theta_1 - \theta_0) < 0$.
For this $z$, we obtain
\begin{equation*}
  0 > \deriv \loss_z(\theta_0; v)
  = \sup_{g \in \partial \loss_z(\theta_0)} \<g, v\>
  = -\inf_{g \in \partial \loss_z(\theta_0)} \<g, -v\>
  = -\deriv_- \loss_z(\theta_0; -v),
\end{equation*}
i.e., $\deriv_- \loss_z(\theta_0; -v) > 0$.
So $\deriv \loss_z(\theta_0; -v) \ge \deriv_- \loss_z(\theta_0; -v)
> 0$, and as $v \in \sphere^{d-1}$ was arbitrary (so we may as well
have chosen $-v$), we see that
for any $v \in \sphere^{d-1}$, there exists
$z \in \mc{Z}$ for which $\deriv \loss_z(\theta_0; v) > 0$.
That is, inequality~\eqref{eqn:positive-derivative} holds at $\theta_0$.

For the converse, assume that $\theta \in \interior \Theta$
satisfies inequality~\eqref{eqn:positive-derivative}.
Then for $\theta_0 \in \interior \Theta$,
$\theta_0 \neq \theta$, setting $v = (\theta_0 - \theta) /
\ltwo{\theta - \theta_0}$, there exists $z$ for which
$\deriv \loss_z(\theta; v) > 0$.
The function $h(t) \defeq \loss_z(t \theta_0 + (1 - t) \theta)$
satisfies
\begin{equation*}
  \partial h(t) = \left[\deriv_- \loss_z(t \theta_0 + (1 - t) \theta;
    \theta_0 - \theta),
    \deriv \loss_z(t \theta_0 + (1 - t) \theta;
    \theta_0 - \theta)\right]
\end{equation*}
and that $\partial h(t)$ is increasing (or at least non-decreasing) in $t$
(see~\cite[Ch.~VI.2.3]{HiriartUrrutyLe93}).
%% \begin{equation*}
%%   t \mapsto \deriv \loss_z(t \theta_0 + (1 - t) \theta; \theta_0 - \theta)
%% \end{equation*}
In particular,
$\deriv_- \loss_z(\theta_0; \theta_0 - \theta) \ge \deriv \loss_z(\theta;
\theta_0 - \theta) > 0$.
Rewriting this,
\begin{equation*}
  0 < \deriv_- \loss_z(\theta_0; \theta_0 - \theta)
  = \inf_{g \in \partial \loss_z(\theta_0)} \<g, \theta_0 - \theta\>
  = -\sup_{g \in \partial \loss_z(\theta_0)} \<g, \theta - \theta_0\>
  = - \deriv \loss_z(\theta_0; \theta - \theta_0).
\end{equation*}
That is, $\deriv \loss_z(\theta_0; \theta - \theta_0) < 0$,
and $\theta \not \in E(\theta_0, \loss)$ for any $\theta_0 \neq \theta,
\theta_0 \in \interior \Theta$.

\subsection{Proof of Lemma~\ref{lemma:dominating-and-achievable}}
\label{sec:proof-dominating-and-achievable}
For each pair $\theta_0, \theta_1 \in \Theta$ with
$\theta_1 \neq \theta_0$, there 
there exists $z$ such that $\loss_z(\theta_1) > \loss_z(\theta_0)$.
Written differently, for each $\theta \in \interior \Theta$
and each $v \in \sphere^{d-1}$ and $t > 0$
such that $\theta - tv \in \Theta$,
there exists $z$ such that
\begin{equation*}
  \frac{\loss_z(\theta) - \loss_z(\theta - tv)}{t} > 0.
\end{equation*}
Because $\loss_z$ is subdifferentiable at $\theta \in \interior \Theta$,
$\loss_z(\theta - tv) \ge \loss_z(\theta) - t \<g, v\>$ for each
$g \in \partial \loss_z(\theta)$,
that is, $t \<g, v\> \ge \loss_z(\theta) - \loss_z(\theta - tv)$.
Rearranging the preceding display,
\begin{equation*}
  \<g, v\> \ge \frac{\loss_z(\theta) - \loss_z(\theta - tv)}{t}
  > 0
  ~~ \mbox{for~all~} g \in \partial \loss_z(\theta),
\end{equation*}
and so $\deriv_- \loss_z(\theta; v)
= \inf_{g \in \partial \loss_z(\theta)} \<g, v\> > 0$.

For the converse, let $\theta_0$ satisfy the
condition~\eqref{eqn:positive-derivative} and
$\theta \neq \theta_0$, $\theta \in \Theta$.
Then for $v = (\theta - \theta_0) / \ltwo{\theta - \theta_0}$,
we take a $z$ for which $\deriv \loss_z(\theta_0; v) > 0$,
and then
\begin{equation*}
  \loss_z(\theta)
  = \loss_z(\theta_0) + \ltwo{\theta - \theta_0}
  \cdot \int_0^1 \underbrace{\deriv \loss_z(\theta_0
    + t (\theta - \theta_0); v)}_{\ge \deriv \loss_z(\theta_0; v) > 0} dt
  \ge \loss_z(\theta_0)
  + \ltwo{\theta_0 - \theta} \deriv \loss_z(\theta_0; v).
\end{equation*}
So $\theta \not \in \dominating(\theta_0)$.

%% $u = c^2 \log n$, $c = \sqrt{u / \log n}$, so
%% $dc = 1 / (2 \sqrt{u \log n}) du$.

\end{document}